\documentclass[12pt]{amsart}
\usepackage{amsaddr}
\usepackage{tkz-euclide}
\usepackage{latexsym}
\usepackage{amssymb,amsmath,mathrsfs}
\usepackage{amsthm}
\usepackage{bbm}
\usepackage{comment}
\usepackage[capitalise]{cleveref}
\usepackage{enumerate}
\usepackage{esvect}
\usepackage{graphicx}
\usepackage{afterpage}
\usepackage{mwe}
\usepackage{pgfplots}
\pgfplotsset{compat=1.11}
\usepgfplotslibrary{fillbetween}

\allowdisplaybreaks 
\newtheorem{theorem}{Theorem}[section]

\newtheorem{prop}[theorem]{Proposition}
\newtheorem{remark}{Remark}
\newtheorem{lemma}[theorem]{Lemma}


\def\XXint#1#2#3{{\setbox0=\hbox{$#1{#2#3}{\int}$ }
\vcenter{\hbox{$#2#3$ }}\kern-.6\wd0}}

\setlength{\evensidemargin}{0in}
\setlength{\oddsidemargin}{0in}
\setlength{\topmargin}{-.5in}
\setlength{\textheight}{9in}
\setlength{\textwidth}{6.5in}
\pagestyle{plain}
\setlength\parindent{24pt}

\newcommand{\dist}{\mathrm{dist}}

\newcommand{\F}{f_{LdG}}

\newcommand{\R}{\mathbb{R}}
\newcommand{\N}{\mathbb{N}}

\newcommand{\z}{\hat{z}}
\newcommand{\I}{\mathbf{I}}
\newcommand{\s}{\mathcal{S}}
\newcommand{\f}[2]{\displaystyle\frac{#1}{#2}}
\newcommand\restr[2]{{
  \left.\kern-\nulldelimiterspace 
  #1 
  \vphantom{\big|} 
  \right|_{#2} 
  }}
  \newcommand{\dd}{10\sqrt{\delta}}
\newcommand{\Om}{\Omega}
\newcommand{\rt}{r_{\theta}}
\newcommand{\h}{\mathcal{H}^1}
\newcommand{\pa}{\partial}
\newcommand{\pam}{\partial_M}

\newcommand{\e}{\varepsilon}
\newcommand{\p}{\varphi}
\pagenumbering{arabic}

\newcommand{\Q}{\tilde{Q}_\e}
\newcommand{\ls}{\limsup_{\e \rightarrow 0}}

\numberwithin{equation}{section}

\title{Dimension reduction for the Landau--de Gennes model: the vanishing nematic correlation length limit}

\author{Michael R. Novack}
\address{Department of Mathematics, Indiana University}
\email{mrnovack@indiana.edu}
\begin{document}
\date{\today}
\begin{abstract}
 We study nematic liquid crystalline films within the framework of the Landau--de Gennes theory in the limit when both the thickness of the film and the nematic correlation length are vanishingly small compared to the lateral extent of the film. We prove $\Gamma$--convergence for a sequence of singularly perturbed functionals with a potential vanishing on a high--dimensional set and a Dirichlet condition imposed on admissible functions. This then allows us to prove the existence of local minimizers of the Landau--de Gennes energy in the spirit of [R. V. Kohn and P. Sternberg, \textit{Local minimisers and singular perturbations}, Proc. Roy. Soc. Edinburgh Sect. A, 111 (1989), pp. 69--84] despite the lack of compactness arising from the high--dimensional structure of the wells. The limiting energy consists of leading order perimeter terms, similar to Allen--Cahn models, and lower order terms arising from vortex structures reminiscent of Ginzburg--Landau models.
\end{abstract}
\maketitle \centerline{\date}
\section{Introduction}\label{intro}
In this paper, we study thin nematic liquid crystalline films. Nematic liquid crystals consist of rod-- or disk--like molecules which can flow like liquids, but still exhibit molecular orientational order. We analyze the Landau--de Gennes $Q$--tensor variational model in the thin film limit by carrying out a dimension reduction from three dimensions to two. This model exhibits features similar to two--dimensional Allen--Cahn and Ginzburg--Landau theories. We also generalize the local minimizer existence result of \cite{ks89} to functionals with potentials vanishing on high dimensional sets and limiting energy which has interior interfacial cost and cost associated to the boundary of the domain.\par
To arrive at the two--dimensional models, we use the theory of $\Gamma$--convergence; see \cite{dalmaso} for an introduction to the topic. Dimension reduction for planar nematic thin films via $\Gamma$--convergence has also been studied in \cite{gms}. We use a similar model as in \cite{gms} but scale the functional differently by assuming that non--dimensional nematic correlation length vanishes along with the aspect ratio of the film.\par
We fix a bounded, $C^2$ domain $\Omega \subset \R^2$ and work on the cylinder $\Omega \times (0,h)$, where $0 <h \ll 1$. We require admissible $Q$--tensors to have uniaxial boundary data $g$ on the lateral boundary $\pa \Omega \times (0,h)$ of the cylinder. In addition to being uniaxial, $g$ is chosen such that the normal to the top and bottom of the cylinder, in this case  the vector $\z=(0,0,1)$, is an eigenvector. This requirement is motivated by the desire to model both homeotropic and parallel anchoring, in which the majority of the nematic molecules are oriented perpendicular and parallel to the surface of the film, respectively \cite{virg}. Reduced $Q$--tensor models for thin films such as \cite{bpp,sch} have incorporated this condition based on experiments in \cite{chic}.\par
Our starting point is the non--dimensional energy functional
\begin{equation}\notag
E_\e(Q) = \int_{\Om' \times (0,1)} \left(|\nabla_x Q|^2 + \f{1}{\e^2}|\nabla_z Q|^2 + \f{1}{\delta^2}f_{LdG}(Q)\right) \,dx\,dz + \f{1}{\e}\int_{\Om' \times \{0,1\}} f_s(Q) \,dx
,\end{equation}
derived in \cite[Eq. 25]{gms}. The spatial coordinates $(x,z)=(x_1,x_2,z)$ have been rescaled using the change of variables $(x_1,x_2,z)\rightarrow (x_1/R,x_2/R,z/h)$, where $R:=\textup{diam}(\Om)$, and $\Om'=\Om/R$. From now on we will refer to the rescaled domain simply as $\Om$. Here $\e:=h/R$, and $\delta:=\xi_{\mathrm{NI}}/R$ with $\xi_{\mathrm{NI}}$ being the nematic correlation length \cite{gartland}. The nondimensional elastic energy density $|\nabla_x Q|^2+\f{1}{\e^2}|\nabla_z Q|^2$ corresponds to the so--called equal elastic case of the general Landau--de Gennes elastic energy density. The $f_{LdG}$ term is the nondimensional Landau--de Gennes bulk energy density \cite{gartland}; see Section \ref{pre} for a precise definition.

The surface term $f_s$ from \cite{gms} is given by
\begin{equation}\notag
f_s(Q) =  \gamma|(\I - \z \otimes \z )Q\z|^2+\alpha(Q\z \cdot \z - \beta)^2
,\end{equation}
where $\alpha,\gamma>0$, $\beta$ is real, and $\I$ is the $3\times 3$ identity matrix. It is evident from this expression that the minimizers of $f_s$ are precisely those $Q$--tensors which have $\z$ as an eigenvector with eigenvalue $\beta$. This surface energy is a specific example of the ``bare" surface energy
\begin{equation}\notag
f_s(Q)=c_1(Q\z \cdot \z)+c_2|Q|^2 + c_3 (Q\z \cdot \z)^2+c_4 |Q\z|^2,
\end{equation}
for a planar film, where $\alpha=c_3+c_4>0$, $\beta = -\f{c_1}{2(c_3+c_4)}$, $c_2=0$, and $\gamma=c_4>0$. Choosing the constants $c_i$ to satisfy these constraints ensures that the energies $F_\e$ are bounded from below and also allows for nontrivial behavior in the limit due to the degeneracy of the set of minimizers for $f_s$.\par
Next, we wish to choose a suitable scaling for $\delta$ in terms of $\e$ and carry out the one-parameter dimension reduction. In order for both the Landau--de Gennes and surface terms to play a role in the asymptotic regime, a natural scaling to consider is $\delta\sim\sqrt\e$ when the core radius of nematic defects, although small, is still much larger than the thickness of the film. Then the energy $E_\e$ can be expressed in the form
\begin{equation}\notag
E_\e(Q) = \int_{\Omega \times (0,1)}  \left( |\nabla_xQ|^2 +\f{|\nabla_{z} Q|^2}{ \e^2 } +  \f{1}{\e}f_{LdG}(Q) \right)\,dx\,dz + \f{1}{\e}\int_{\Omega \times \{0,1\}} f_s(Q) \,dx
.\end{equation}

With this scaling in place, we consider an asymptotic limit of the functionals $E_\e$ as $\e\to0$. This is in contrast to \cite{gms}, where only the non--dimensional thickness of the film was assumed to approach zero.

For convenience, we replace $\e$ by $\e^2$ and multiply the modified functionals by $\e$. This results in functionals for which the leading con
contribution will be $O(1)$:
\begin{equation}\notag
F_\e(Q) =  \int_{\Omega \times (0,1)} \left(\e|\nabla_xQ|^2 + \f{|\nabla_{z} Q|^2}{ \e ^3} + \f{1}{\e}f_{LdG}(Q) \right)\,dx\,dz+\f{1}{\e} \displaystyle \int_{\Omega \times \{0,1\}} f_s(Q) \,dx.
\end{equation}
Heuristically, due to the high cost associated with $z$--dependence, reasonable competitors for $F_\e$ should be essentially independent of $z$. This observation allows us to rewrite the integral of $f_s$ over $\Omega\times \{0,1\}$ as twice the integral of $f_s$ over $\Om \times(0,1)$. With the equal scaling of the Landau--de Gennes and the surface terms then, we obtain a potential which is $f_{LdG}$ perturbed by $2f_s$. Due to the growth of $f_{LdG}$ and $f_s$ at infinity, the minimum value of $f_{LdG}+2f_s$ is achieved and then, without loss of generality, we can set the minimum to be zero. Denoting the zero set of $\F+2f_s$ by $P$, we see that the bulk and surface terms penalize those $Q$ which take values away from $P$.\par
Let us denote by $P_i$ a connected component of $P$ and define the function $\p_i(Q)$ as \begin{equation}\notag d_{\sqrt{W}}(Q,P_i),\end{equation} where $d_{\sqrt{W}}$ is the degenerate Riemannian metric with the conformal factor $\sqrt{f_{LdG}+2f_s}$. Also, we will frequently identify maps on $\Om$ as maps defined on $\Om \times (0,1)$ by the trivial $z$--independent extension. Our first main theorem is:
\begin{theorem}\label{t1vague}
The $F_\e$ $\Gamma$--converge in the $L^1$ topology to the functional $F_0$, defined by
\begin{equation}\notag
F_0(Q)= \sum_{i,j=1}^n \p_i(P_j) \mathcal{H}^1(\partial^* A_i \cap \partial^* A_j) + 2\sum_{i=1}^n \int_{\pa A_i \cap \pa \Om} \p_i(g(x)) \,d\mathcal{H}^1(x),
\end{equation}
where $A_i:=\{x \in \Om: Q(x) \in P_i\}$, the set $\partial^* A_i$ is the reduced boundary of $A_i$, cf. \cite{giusti}, and $\h$ denotes the one--dimensional Hausdorff measure.
\end{theorem}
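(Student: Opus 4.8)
The plan is to establish the two standard halves of a $\Gamma$-convergence statement: the $\liminf$ (lower bound) inequality and the existence of a recovery sequence (upper bound). Throughout, the key structural observation is that, after the replacement $\e\mapsto\e^2$ and rescaling, the $z$-derivative term carries the large weight $\e^{-3}$, so any sequence $Q_\e$ with $\sup_\e F_\e(Q_\e)<\infty$ must become asymptotically $z$-independent; one should make this rigorous first, showing that along such a sequence the $L^1$ (or $L^2$) distance between $Q_\e$ and its vertical average $\bar Q_\e(x):=\int_0^1 Q_\e(x,z)\,dz$ tends to zero, so that the problem genuinely reduces to a two-dimensional one for a map $Q\colon\Omega\to\R^{3\times3}_{\mathrm{sym}}$. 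On the reduced problem, the surface integral over $\Omega\times\{0,1\}$ becomes $2\int_\Omega f_s(Q)\,dx$, and the bulk becomes $\int_\Omega\big(\e|\nabla_x Q|^2+\tfrac1\e(\F+2f_s)(Q)\big)\,dx$ — a vector-valued Modica–Mortola functional with a potential $W:=\F+2f_s$ vanishing on the high-dimensional set $P=\bigcup_i P_i$, plus the Dirichlet-type boundary penalization coming from $g$.

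For the $\liminf$ inequality I would argue as follows. Given $Q_\e\to Q$ in $L^1$ with bounded energy, the potential term forces $\int_\Omega W(Q)=0$, hence $Q(x)\in P$ a.e.; thus $Q$ is determined by the partition $\{A_i\}$ of $\Omega$ (up to the choice of component within each $P_i$, but only the partition enters $F_0$), and the energy bound plus a BV-compactness argument à la Modica–Mortola shows each $A_i$ has finite perimeter. On the interior interfaces one uses the classical slicing/optimal-profile lower bound: along $\pa^*A_i\cap\pa^*A_j$ the transition from $P_i$ to $P_j$ costs at least the geodesic distance $\p_i(P_j)=d_{\sqrt W}(P_i,P_j)$ in the degenerate metric with conformal factor $\sqrt W$ (here one needs the elementary inequality $\e|\nabla_x Q|^2+\tfrac1\e W(Q)\ge 2\sqrt{W(Q)}\,|\nabla_x Q|$ and a coarea/blow-up argument localizing on balls centered at interface points). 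On the boundary $\pa A_i\cap\pa\Omega$ one needs the analogous ``boundary'' lower bound: since the trace of $Q_\e$ is $g$ but the bulk limit equals $P_i$, the transition from $g(x)$ to $P_i$ must occur in a thin layer near $\pa\Omega$, and its cost is at least $\int_{\pa A_i\cap\pa\Omega}d_{\sqrt W}(g(x),P_i)\,d\h$, with the factor $2$ accounting for the doubled surface term; a careful localization near $\pa\Omega$, using that $\Omega$ is $C^2$ so the boundary is locally a graph, is required here.

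For the recovery sequence, given a limit $Q$ with partition $\{A_i\}$ of finite perimeter, I would first reduce by density to the case where the interfaces $\pa^*A_i$ are (relatively) smooth (polyhedral approximations of the $A_i$, plus a diagonal argument). Then I construct $Q_\e$ by placing optimal one-dimensional transition profiles transverse to each interface: near $\pa^*A_i\cap\pa^*A_j$, insert a reparametrized $\sqrt W$-geodesic from $P_i$ to $P_j$ with transition width $O(\e|\log\e|)$ or $O(\e)$; near $\pa A_i\cap\pa\Omega$, insert a geodesic from $g(x)$ (on the boundary) to the value in $P_i$, using a tubular-neighborhood coordinate and the smoothness of both $g$ and $\pa\Omega$; away from all interfaces set $Q_\e$ equal to a fixed-energy-minimizing value in the appropriate $P_i$ (this is where the high-dimensionality of the wells is harmless, since we may simply choose a constant selection). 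Matching the boundary condition exactly may require a further thin correction layer whose energy is $o(1)$. One checks the interface and boundary contributions converge to the two sums in $F_0$, and the junctions where several interfaces meet contribute negligibly (their $\h$-measure is zero).

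The main obstacle I expect is the interplay between the boundary term and the loss of compactness from the high-dimensional wells: unlike scalar Allen–Cahn, here $P_i$ is a manifold, so the trace of a bounded-energy sequence near $\pa\Omega$ need not converge strongly, and one must show that nonetheless the boundary cost is controlled from below by $d_{\sqrt W}(g(x),P_i)$ rather than by the distance to a single point — this forces a genuinely ``metric'' (geodesic-distance) lower-bound argument near the boundary, combined with a delicate localization that the $C^2$ regularity of $\pa\Omega$ and the structure of $g$ (uniaxial, with $\z$ an eigenvector) make tractable. A secondary technical point is verifying that the degenerate distance functions $\p_i$ are well-defined and (Lipschitz) continuous despite $W$ vanishing on a set of positive dimension; this should follow from the fact that $W$ vanishes to finite order transverse to $P$ together with the growth of $\F+2f_s$ at infinity, but it must be checked to make both the statement and the optimal-profile constructions meaningful.
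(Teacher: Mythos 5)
Your overall blueprint is close to the paper's: reduce to a two--dimensional problem using the weight $\e^{-3}$ on $|\nabla_z Q|^2$, replace the surface integrals by $2\int_\Om f_s$, identify $W=\F+2f_s$, and split into a $\liminf$ inequality and a recovery sequence built from $\sqrt{W}$--geodesic profiles and polygonal approximation. Your $\liminf$ argument, however, takes a genuinely different route from the paper. You propose a local blow--up/slicing bound on interior interfaces plus a separate boundary localization, whereas the paper instead extends $Q_\e$ by $g$ to an enlarged domain $\tilde\Om\supset\Om$ along normal segments, passes to the quotient metric space $E=(\R^5,d_{\sqrt W})/\sim$ and invokes the metric--$BV$ lower semicontinuity of \cite{ambrosio}; the boundary term then comes for free because $\pa\Om$ becomes part of the jump set $J_Q$ inside $\tilde\Om$. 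Your route is plausible but requires a separate trace/localization argument at $\pa\Om$ that you flag but do not supply; the extension trick is cleaner and is essentially why the paper can quote \cite{ambrosio} directly.

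The genuine gap is in the recovery sequence near $\pa\Om$. You write ``insert a geodesic from $g(x)$ to the value in $P_i$'' and suggest this should assemble into an $H^1$ competitor via tubular coordinates, but for a degenerate metric $d_{\sqrt W}$ geodesics need not depend even continuously on their endpoints, let alone Lipschitz--continuously in $x$; moreover the geodesic endpoint in $P_i$ must also vary continuously and ultimately be matched to a single constant value $\alpha_i$ in the interior. Without a mechanism to produce a uniformly Lipschitz--in--$x$ family $\gamma_{i,x}$ with controlled endpoints, the boundary layer is not well defined and the energy estimate fails. The paper resolves this by exploiting the rotational symmetry $W(\rt Q\rt^T)=W(Q)$, which gives $\rt P_i\rt^T=P_i$ and $g(x)=r_{\theta(x)}g(x_0)r_{\theta(x)}^T$: one fixes a single geodesic $\gamma_{i,x_0}$ and defines $\gamma_{i,x}=r_{\theta(x)}\gamma_{i,x_0}r_{\theta(x)}^T$ on $[0,1/2]$, then appends a zero--cost arc inside $P_i$ from $r_{\theta(x)}\alpha_i r_{\theta(x)}^T$ back to $\alpha_i$ on $[1/2,1]$ (and cuts out small sets $\Om_0^\e$ near the multivaluedness of $\theta$ and near triple points $\pa A_i^h\cap\pa A_j^h\cap\pa\Om$, where boundary interfaces are first normalized to meet $\pa\Om$ at right angles). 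You allude to ``the structure of $g$'' being helpful, but you should make this symmetry explicit: it is the crucial ingredient (as the introduction of the paper itself emphasizes), and without it the recovery construction does not close.
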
\par
The $\Gamma$--convergence of Allen--Cahn type functionals is well-studied in the literature; see for example \cite{baldo,fons,ks89,mm,sternbergthesis,rocky}. In each of these examples, the zero set of the potential function is a finite number of points. In our case, however, the minimal set of the potential is the zero set of a quartic polynomial in five variables, and we do not have a full description of its structure. Generally, the wells are high--dimensional and perhaps contain singularities. \color{black}The $\Gamma$-- convergence of Allen--Cahn type functionals possessing a general potential function with a zero set of arbitrary dimension and smoothness has been established in \cite{ambrosio}. The model we consider is distinct from \cite{ambrosio} in that it combines a high--dimensional potential well with a Dirichlet condition. \color{black}We also point out that it has been more common in the study of such problems to impose a volume constraint on the space of admissible functions, as in \cite{sternbergthesis,baldo,fons}, rather than a Dirichlet condition, as considered here.\par
For the scalar case, $\Gamma$--convergence of Allen--Cahn functionals with competitors satisfying a Dirichlet condition was proved in \cite{owen}. The asymptotics of vector--valued minimizers of Allen--Cahn functionals, rather than full $\Gamma$--convergence, in the presence of a suitable Dirichlet condition has been addressed in \cite{lin}. To the best of our knowledge, these appear to be a few of the rare instances of a discussion of $\Gamma$--convergence for singular perturbations of multi--well potentials among maps subject to a Dirichlet condition. This may be due to the inherent difficulty in building a boundary layer consisting of a smoothly varying family of geodesics bridging the Dirichlet condition lying off the potential wells to values in the wells.

One of the main contributions of the present work is obtaining full $\Gamma$--convergence for Allen--Cahn type functionals with higher--dimensional wells among competitors that satisfy a Dirichlet condition. The rotational symmetry described in Section \ref{gproof} of the Landau--de Gennes potential $f_{LdG}$ and the surface term $f_s$ is crucial to the proof of $\Gamma$--convergence, in particular, to the construction of a recovery sequence. This symmetry was also utilized by the authors in \cite{abx2} in the context of studying a nematic liquid crystal outside a spherical particle under an external field. Away from the boundary, we are able to apply the techniques from \cite{ambrosio}. We point out that the techniques in the $\Gamma$--convergence proof would apply equally well to similar functionals with a potential vanishing on a higher--dimensional set, as long as the potential has some symmetry respected by the Dirichlet boundary data; see the discussion at the end of Section \ref{local}.\par \color{black}
A natural question regarding Allen--Cahn type functionals such as $F_\e$ is the possibility of finding local minimizers for the functionals $F_\e$. In cases such as the one discussed here, where the zero set of the potential consists of curves or even surfaces as opposed to isolated points, this has not been addressed in the literature. When the zero set consists of two points, an answer was provided by the authors in \cite{ks89}. Crucial to their proof is the $L^1$--compactness of a bounded energy sequence, which cannot be expected in general when the zero set is higher--dimensional. We resolve this question for very general functionals by working with a distance which is weaker than the usual $L^1$ metric. For two tensors $Q_1$ and $Q_2$, we define
\begin{equation}\notag
\Lambda(Q_1,Q_2) = \sum_i\|\p_i(Q_1)-\p_i(Q_2) \|_{L^1(\Om)}.\end{equation}
For $Q$--tensors whose range is contained in the zero set $P=\cup_i P_i$, the distance $\Lambda(Q_1,Q_2)$ is zero precisely when the sets where $Q_1$ and $Q_2$ lie in $P_i$ coincide for each $i$. It can be quickly shown (see Section \ref{local}) that a sequence $\{Q_\e\}$ with bounded energy is $\Lambda$--compact. A similar pseudo--metric was proposed in \cite{ambrosio}, under which bounded energy sequences enjoy compactness. \color{black}This also allows us to prove the existence of local minimizers of $F_\e$ and $F_0$ in Sections \eqref{local} and \eqref{iso} similar to those in \cite{ks89} and \cite{ziemer} for potentials that vanish on sets more complicated than a finite collection of points. The theorems apply to our particular liquid crystal models as well as a wide range of other functionals. Phrasing the rather general result in terms of our specific liquid crystal problem, we prove
\begin{theorem}\label{minivague}
Let $Q_0$ be an isolated $\Lambda$--local minimizer of $F_0$. Then there exists $\e_0>0$ and a family $\{Q_\e \}_{\e<\e_0}$ such that
\begin{equation}\label{min1vague}
Q_\e \text{ is a }\Lambda \text{--local minimizer of } F_\e
\end{equation}
and
\begin{equation}\label{min2vague}
\Lambda(Q_0,Q_\e) \rightarrow 0
.\end{equation}
\end{theorem}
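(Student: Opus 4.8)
The plan is to adapt the Kohn--Sternberg construction from \cite{ks89}, replacing its reliance on $L^1$--compactness by the weaker $\Lambda$--compactness available here. Recall that $Q_0$ is an isolated $\Lambda$--local minimizer of $F_0$, so there exists $r>0$ with $F_0(Q_0) < F_0(Q)$ for all $Q$ with $0 < \Lambda(Q_0,Q) \le r$. I would fix such an $r$ and work in the closed $\Lambda$--ball $\overline{B}_r := \{Q : \Lambda(Q_0,Q) \le r\}$, aiming to produce, for $\e$ small, a minimizer $Q_\e$ of $F_\e$ restricted to $\overline{B}_r$ that lies in the open ball $B_r$; such a $Q_\e$ is automatically a $\Lambda$--local minimizer of the unconstrained functional $F_\e$.

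The argument has three steps. \emph{Existence of constrained minimizers:} For each fixed $\e$, take a minimizing sequence for $F_\e$ over $\overline{B}_r$. Since the admissible class carries the Dirichlet condition $g$ and $F_\e$ controls $\e\int|\nabla_x Q|^2 + \frac1\e\int f_{LdG}(Q)$, one extracts (for fixed $\e>0$) a subsequence converging strongly in $L^2$, hence in $L^1$, to some $Q_\e$; lower semicontinuity of $F_\e$ and $\Lambda$--closedness of $\overline{B}_r$ give that $Q_\e$ minimizes $F_\e$ over $\overline{B}_r$. (One must check $\overline{B}_r$ is nonempty, which follows by using the recovery sequence from Theorem~\ref{t1vague} applied to $Q_0$ itself.) \emph{Energy bound and $\Lambda$--convergence:} Plugging the recovery sequence $\widehat{Q}_\e \to Q_0$ for $Q_0$ into the minimality of $Q_\e$ yields $\limsup_\e F_\e(Q_\e) \le F_0(Q_0)$, so $\{Q_\e\}$ has bounded energy; the $\Lambda$--compactness result quoted from Section~\ref{local} gives a subsequence with $\Lambda(Q_\e, Q_*) \to 0$ for some $Q_*$, and since $\overline{B}_r$ is $\Lambda$--closed, $Q_* \in \overline{B}_r$. \emph{Identifying the limit as $Q_0$:} By the $\liminf$ inequality of $\Gamma$--convergence, $F_0(Q_*) \le \liminf_\e F_\e(Q_\e) \le F_0(Q_0)$; since $Q_* \in \overline{B}_r$ and $Q_0$ is the strict $\Lambda$--minimizer there, $F_0(Q_*) < F_0(Q_0)$ is impossible unless $\Lambda(Q_0,Q_*) = 0$, i.e. $Q_* = Q_0$ in the $\Lambda$ sense. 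Hence $\Lambda(Q_0,Q_\e) \to 0$, which is \eqref{min2vague}; in particular $\Lambda(Q_0,Q_\e) < r$ for $\e$ small, so $Q_\e$ is interior to $\overline{B}_r$ and is therefore a genuine (unconstrained) $\Lambda$--local minimizer of $F_\e$, giving \eqref{min1vague}. A standard subsequence argument upgrades this to convergence of the full family.

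The main obstacle is the \emph{existence of constrained minimizers} step and, relatedly, ensuring that minimizing over a $\Lambda$--ball is well-posed: $\Lambda$ is only a pseudo-metric that sees the phase-indicator functions $\p_i(Q)$, so a priori it does not control $Q$ itself, and the direct method in $L^1$ must be run carefully. For fixed $\e>0$ the coercivity of $F_\e$ in $\nabla_x Q$ and in $f_{LdG}(Q)$ (together with the growth of $f_{LdG}$ at infinity) does provide genuine $H^1$-type bounds, so this is essentially a fixed-$\e$ compactness statement rather than a uniform one; the subtlety is purely that the constraint set is described through $\Lambda$. One shows that $Q \mapsto \p_i(Q)$ is continuous and that $L^1$ convergence of $Q_k \to Q_\e$ forces $\p_i(Q_k) \to \p_i(Q_\e)$ in $L^1$, so $\Lambda(Q_0,\cdot)$ is $L^1$-lower semicontinuous and $\overline{B}_r$ is $L^1$-closed, closing the gap. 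A secondary technical point is verifying that the $Q_\e$ produced are interior local minimizers rather than sitting on $\partial \overline{B}_r$; this is exactly what the strict inequality in the definition of \emph{isolated} $\Lambda$--local minimizer is designed to give, via Step 3.
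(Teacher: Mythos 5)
Your proposal follows essentially the same path as the paper's proof of \cref{mini}: minimize $F_\e$ over a closed $\Lambda$--ball around $Q_0$, use the recovery sequence to bound $\limsup_\e F_\e(Q_\e)\leq F_0(Q_0)$, invoke $\Lambda$--compactness to extract a subsequential limit $Q_*$, and use isolatedness to force $Q_*$ to coincide with $Q_0$ and hence $Q_\e$ to lie interior to the ball. The organizational difference (your direct argument versus the paper's contradiction assuming $\Lambda(Q_{\e_j},Q_0)=\delta$) is cosmetic, and your added discussion of why the constrained minimization is well--posed is a reasonable filling--in of a step the paper states without elaboration.

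There is one genuine (though reparable) gap at the step ``By the $\liminf$ inequality of $\Gamma$--convergence, $F_0(Q_*)\le\liminf_\e F_\e(Q_\e)$.'' The $\Gamma$--liminf inequality in \cref{t1} is proved for sequences converging in $L^1(\Om\times(0,1);\s)$, but what you have produced is only $\Lambda(Q_\e,Q_*)\to 0$, which does not imply $L^1$ convergence of $Q_\e$ to $Q_*$ (indeed $\Lambda$ is only a pseudo--metric that is blind to where in a well $P_i$ the map sits). You cannot simply cite the liminf inequality; you must argue that the lower--semicontinuity proof still applies under the weaker convergence. The paper handles this explicitly: since $Q_*$ takes values in $P$, on each set $A_i=\{Q_*\in P_i\}$ one has $d_{\sqrt{W}}(\pi\circ Q_\e,\pi\circ Q_*)=\p_i(Q_\e)$ and $\p_i(Q_*)=0$, so $\|d_{\sqrt{W}}(Q_\e,Q_*)\|_{L^1}\le\Lambda(Q_\e,Q_*)\to 0$; and an inspection of the proof of \eqref{lsc} shows that $L^1$--convergence of $d_{\sqrt{W}}(Q_\e,Q_*)$ to zero, together with the fact that $Q_*$ is $P$--valued, is enough to run the $BV(\cdot,E)$ lower--semicontinuity \eqref{abslsc} and conclude $F_0(Q_*)\le\liminf_\e F_\e(Q_\e)$. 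Without this observation, the chain of inequalities leading to $\Lambda(Q_0,Q_*)=0$ does not close. Once you insert it, your argument is complete and matches the paper's.
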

It can be quickly shown (see Remark \ref{mike}) that the $Q_\e$ are in fact $H^1$--local minimizers, and therefore are classical solutions, by elliptic regularity, to the Euler-Lagrange system corresponding to $F_\e$.\par
In order to apply \cref{minivague}, we must obtain a $\Lambda$--isolated local minimizer for $F_0$, which has interior interfacial cost as well as cost associated to the boundary of $\Om$. We prove the existence of such a local minimizer in certain domains by using a calibration argument. The interface of the locally minimizing partition is contained in a ``neck" of such a domain. However, unlike for example the situation in \cite{ks89}, the optimal interface is a straight line which does \textit{not} meet the boundary of $\Om$ at the narrowest part of the neck. Rather, the interface satisfies a contact angle condition, given by \eqref{contact}, in which the boundary energy is balanced by the interfacial energy. \par
We also wish to study the formation of defects for equilibrium configurations of liquid crystalline films. Such defects often take the form of disclination lines, which arise due to the degree of the Dirichlet boundary data $g$. The history of these types of questions goes back to \cite{bbh}. More recently, for liquid crystalline films, the authors in \cite{bpp} considered such defects for a two--dimensional Landau--de Gennes model. The study of defects, or vortices, in the presence of boundary layers for Ginzburg--Landau type energies has been analyzed in \cite{as1,as2,as3}. In our scaling, the energy associated to these types of defects should be on the order of $\e |\log \e |$. This is lower order than the $O(1)$ cost associated to any boundary layers for finite energy $Q_\e$. Although the maps under consideration here are $\R^5$--valued, as opposed to $\R^2$--valued, the main techniques of these papers are applicable to our problem after some adjustments. In the following theorem, we use the techniques from \cite{as3} to obtain an asymptotic expansion for the energy of a minimizer $Q_\e$ of $F_\e$ for a specific choice of the surface energy density $f_s$. 
\begin{theorem}\label{asyv1} Let $\Omega$ be a simply--connected domain and $g$ have degree $k$. Assume that \begin{equation}\notag f_s=  \gamma|(\I - \z \otimes \z )Q\z|^2.\end{equation} Then the minimizers $Q_\e$ of $F_\e$  satisfy the asymptotic development
\begin{equation}\notag
F_\e(Q_\e) = 2 \int_{\partial \Omega} \p_1(g(x)) \, d\h(x) + \e s_\star^2  \pi k \log\f{1}{\e} + O(\e)
\end{equation}
as $\e\rightarrow 0$. The constant $s_\star$ is explicit and depends on the bulk term $\F$.
\end{theorem}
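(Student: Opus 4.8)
The plan is to prove the development by matching upper and lower bounds with error $O(\e)$, tracking three contributions living at separated scales: an $O(1)$ boundary--layer cost near $\pa\Om$, an $O(\e\log\f1\e)$ cost concentrated near finitely many interior vortices, and an $O(\e)$ remainder consisting of the Ginzburg--Landau renormalized and interaction energies together with the curvature corrections to the layer. As a preliminary, \cref{t1vague} and the $\Lambda$--compactness from \cref{local} give $F_\e(Q_\e)=2\int_{\pa\Om}\p_1(g)\,d\h+o(1)$ and, after the nearest--point projection $\pr_{P_1}$, $H^1$--closeness on compact subsets of $\Om$ of the minimizers to a map valued in the component $P_1$ of $P$ on which $g$ takes values; for the present $f_s$ this component is a topological circle in $\R^5$, and the relevant constant $s_\star$ is the scalar order parameter of the nematic ground state of $\F$, hence explicit. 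Since $g$ has degree $k$, this limiting circle--valued map carries total vortex degree $k$; moreover the interior partition of $F_0$ is forced to be trivial ($A_1=\Om$), because a region in another component of $P$ would cost $O(1)$ of interfacial energy while saving only the vanishing vortex cost, so the interfacial term of $F_0$ does not appear in the expansion.

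For the upper bound I would exhibit a competitor. In a tubular neighborhood of $\pa\Om$ of width of order $\e$, use the boundary--layer construction from the recovery--sequence part of the proof of \cref{t1vague}: a smoothly varying family of $d_{\sqrt W}$--geodesics from $g(x)$ to $P_1$, reparametrized by the one--dimensional optimal profile; since $\pa\Om$ is $C^2$, the layer energy is $2\int_{\pa\Om}\p_1(g)\,d\h+O(\e)$, the error absorbing the curvature and truncation terms. In the interior I take a map valued in $P_1$ matching the inner trace of the layer, with $k$ well--separated degree--one vortices placed at a minimizer of the associated renormalized energy and modified on scale $\e$ near each core by the canonical Ginzburg--Landau profile; since on $P_1$ one has $F_\e\approx\e|\nabla_x Q|^2$ with the map winding around a circle of radius controlled by $s_\star$, standard estimates give the contribution $\e s_\star^2\pi k\log\f1\e+O(\e)$. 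Gluing the two pieces across the interface, and using that the competitor is $z$--independent, yields the upper bound.

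The lower bound is the heart of the matter, obtained by decoupling the two scales in the spirit of \cite{as1,as2,as3}. First one confines the vortices: once the crude estimate $F_\e(Q_\e)-2\int_{\pa\Om}\p_1(g)\,d\h=O(\e\log\f1\e)$ is available, a clearing--out/monotonicity lemma as in \cite{as3} shows that the mass of $\e|\nabla_x Q_\e|^2$ beyond the layer cannot accumulate near $\pa\Om$, so there is a fixed $\rho>0$ with all the vortex concentration contained in $\Om_\rho:=\{\dist(\cdot,\pa\Om)>\rho\}$. On the collar $\Om\setminus\Om_\rho$, slicing along the inward normal lines and using $\e|\pa_\nu Q|^2+\f1\e(\F+2f_s)(Q)\ge 2\sqrt{(\F+2f_s)(Q)}\,|\pa_\nu Q|\ge 2|\pa_\nu(\p_1\circ Q)|$ together with a trace argument and the bound on $\f1{\e^3}\int|\nabla_z Q_\e|^2$ (which forces near--$z$--independence and concentration of the transition in a width--$\e$ layer) recovers the contribution $2\int_{\pa\Om}\p_1(g)\,d\h-O(\e)$; this is the quantitative form of the Dirichlet--boundary $\Gamma$--liminf used for \cref{t1vague}. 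On $\Om_\rho$, set $\Q=\pr_{P_1}(Q_\e)$; the rotational symmetry of $\F+2f_s$ controls $Q_\e-\Q$ and the discrepancy between $\e\int_{\Om_\rho}|\nabla_x Q_\e|^2$ and $\e\int_{\Om_\rho}|\nabla_x\Q|^2$ at order $O(\e)$ outside $O(\e)$--balls around the vortices, reducing matters to a Dirichlet--energy lower bound for the circle--valued map $\Q$ of degree $k$; the vortex--ball lower bounds of Jerrard and Sandier, or the boundary--adapted version in \cite{as3}, then give $\e\int_{\Om_\rho}|\nabla_x\Q|^2\ge\e s_\star^2\pi k\log\f1\e-O(\e)$, the optimal power $k$ (rather than $k^2$) arising from $\sum_i|d_i|\ge|\sum_i d_i|=|k|$ in the ball construction. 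Summing the collar and interior estimates closes the lower bound.

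The main obstacle is precisely this clean separation at order $\e$: one must show that no vortex energy leaks into the collar and no layer energy is lost under the interior projection. This rests on (i) the a priori bound $F_\e(Q_\e)-2\int_{\pa\Om}\p_1(g)\,d\h=O(\e\log\f1\e)$, which comes from the upper bound together with a crude lower bound and feeds the clearing--out lemma keeping vortices a definite distance from $\pa\Om$; (ii) the projection estimate on $\Om_\rho$, where the $\R^5$--valued nature of $Q_\e$ enters only through the normal bundle of $P_1$ and is handled via the symmetry exploited in \cite{abx2}; and (iii) matching the inner (vortex--region) and collar traces without creating spurious interfacial energy, which again uses that $P_1$ is a single connected well so that the interfacial term of $F_0$ is absent. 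With these in hand the residual error analysis is routine: the renormalized energy and the curvature corrections to the layer are each $O(1)$ before, hence $O(\e)$ after, the $\e$ prefactor, giving the stated remainder.
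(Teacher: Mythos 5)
Your proposal tracks the paper's own proof quite closely in its overall architecture: an upper bound by gluing a boundary layer along $\pa\Om$ to an interior $P_1$--valued vortex configuration, and a lower bound that isolates the collar contribution from the interior vortex energy in the spirit of \cite{as3}, with the paper likewise delegating the lower--bound details to \cite[Proposition 3.1]{as3}, \cite[Proposition 3.2]{as3}, and \cite[Proposition 5.1]{as2}.

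One detail worth correcting: you attribute the boundary--layer construction to ``the recovery--sequence part of the proof of \cref{t1vague}.'' That construction will not close the $O(\e)$ error. The recovery sequence in Section~\ref{gproof} solves the $\delta$--perturbed ODE \eqref{be3.2}, $ (y')^2 = (\delta + W\circ\gamma\circ y)/(\e^2|\gamma'|^2)$, which stops at a finite distance $C_{1,\delta}\e$ from $\pa\Om$ and leaves an excess of order $\sqrt{\delta}$ per unit boundary length; the two limits $\delta\to 0$ and $\e\to 0$ are then diagonalized, which is adequate for $\Gamma$--convergence but destroys any quantitative rate. For the asymptotic development you must instead use the \emph{unperturbed} heteroclinic profile $h' = \sqrt{W(\gamma(h))}$, $h(0)=0$, whose exponential approach to the well endpoint is precisely what lets one truncate at distance $\sim\sqrt{\e}$ with an $O(\e)$ tail and control the curvature correction $|J(P_t)-1|\le Ct$ at the same order; this is how the paper extracts $2\int_{\pa\Om}\p_1(g)\,d\h + O(\e)$ from the layer. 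Since you do say ``reparametrized by the one--dimensional optimal profile,'' you seem to have the right object in mind, but be aware that this is a genuinely different ODE from the one used in the recovery sequence, not merely the $\delta\to 0$ endpoint of it. Two smaller points: the layer width is $\sqrt\e$ rather than $\e$ (the transition is concentrated at scale $\e$ but lives on a region of width $\sqrt\e$); and the interior vortices in this $Q$--tensor setting are of degree $\pm 1/2$ in the sense of \cite[Definition 2]{bpp}, not degree one, though this does not change the linear--in--$k$ scaling. Finally, your argument that the $F_0$--optimal partition is trivial should invoke the standing assumption $\p_1(g) < \p_2(g)$ (equation \eqref{assu}); without it, the boundary cost alone does not force $A_1 = \Om$.
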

We also briefly discuss the convergence of $Q_\e$ to a limiting map resembling the canonical harmonic map from \cite{bbh}, as was done in \cite{bpp}, and the location of the defects governed by a certain ``renormalized energy."\par
The paper is organized as follows. In Section \ref{pre} we introduce the problem in full detail. We also present some preliminaries which will be necessary in the proofs of our results. In Section \ref{gproof} we state the exact version of the $\Gamma$--convergence result, \cref{t1vague}, and give its proof. In Sections \ref{local} and \ref{iso} we prove the existence of local minimizers of $F_\e$ and $F_0$, respectively. In Section \ref{degree} we analyze the issue of defects. Rather than giving full proofs for every result in that section, we outline the ideas and refer the reader to \cite{as3,bpp} for the specific calculations. Finally, in the appendix, we give a partial characterization of the zero set of the modified potential by establishing conditions under which $\z$ is an eigenvector of any minimizer of $\F+2f_s$.
\section{Notation and Preliminaries}\label{pre}
First, we present a brief outline of the Landau--de Gennes $Q$--tensor theory. This theory is built on the $Q$--tensor order parameter field corresponding to the second moment of the local orientational probability distribution (see \cite{mz,newt}). $Q$--tensors are symmetric, traceless, $3\times3$ matrices which are used to model the nematic state of a liquid crystal. As such, they have an orthonormal basis of eigenvectors $v_i$ and corresponding real eigenvalues $\lambda_i$. When two of the eigenvalues of the $Q$--tensor are equal, for example $\lambda_1=\lambda_2$, then the nematic liquid crystal is in a \textit{uniaxial} state and the $Q$--tensor can be written as
\begin{equation}
Q= S\left( v_3 \otimes v_3 - \f{1}{3} \I \right),
\end{equation}
where $S=3\lambda_3/2$, and $\I$ is the identity. If no two eigenvalues of $Q$ are equal, then the liquid crystal is in a \textit{biaxial} state, and
\begin{equation}
Q=S_1\left( v_1 \otimes v_1 - \f{1}{3} \I \right)+S_2\left( v_3 \otimes v_3 - \f{1}{3} \I \right)
,\end{equation}
where $S_1=2\lambda_1+\lambda_3$ and $S_2=\lambda_1+2\lambda_3$. The biaxial state differs from the uniaxial state in that it only possesses reflection symmetries with respect to the three orthogonal axes, whereas the uniaxial state possesses rotational symmetry. Biaxial states may in particular exist in the cores of nematic defects \cite{three}. When all three eigenvalues are zero, the $Q$--tensor is in the \textit{isotropic} state.\par
The associated variational model we consider involves minimization of an energy functional composed of elastic-- and bulk--volume terms as well as a weak anchoring surface term. The most general elastic term one might use includes quadratic and even cubic terms in $Q$ and its derivatives \cite{newt}. Throughout this paper, however, we will work in the so--called equal elastic constants regime which corresponds to the usual Dirichlet energy. For the bulk energy density, we will use the Landau--de Gennes bulk energy density given by
\begin{equation}
\label{eq:ldgfun}
f_{LdG}(Q):= a \textup{tr}(Q)^2+ \f{2b}{3}\textup{tr}(Q^3)+\f{c}{2}(\textup{tr}(Q^2))^2
.\end{equation}
The coefficient $a$ depends on temperature and is negative for sufficiently low temperatures, while $c>0$. One quickly sees that $f_{LdG}$ only depends on the eigenvalues of $Q$. It can be shown that for high temperatures, the global minimum of $f_{LdG}$ is attained by the isotropic state, whereas for low temperatures, the global minimum is achieved by the uniaxial state
\begin{equation}\label{state}
s_\star\left ( m \otimes m - \f{1}{3}\I \right),
\end{equation}
where $m\in \mathbb{S}^2$ and $s_\star$ is a real number given explicitly in terms of $a$, $b$, and $c$ \cite{newt} . Throughout most of this paper, we do not impose any assumptions on the temperature. Note however that since $c>0$, $f_{LdG}$ is bounded from below, and moreover, it grows quartically at infinity. We refer the reader to \cite{gartland} for a thorough overview of the proper non--dimensionalization procedure for \eqref{eq:ldgfun}. 

Recall from the introduction that the weak anchoring surface term is given by:
\begin{equation}
f_s(Q)=\gamma|(\I - \z \otimes \z )Q\z|^2+\alpha(Q\z \cdot \z - \beta)^2
.\end{equation}
From the perspective of modeling nematics, the constant $\beta$ satisfies $-1/3 < \beta < 2/3$; see \cite[p. 24]{virg}. However, our arguments hold for any real $\beta$. Models with weak anchoring surface terms have been a source of much recent research, for example in \cite{abx1,ball,cane,gm,mz,seg}.\par
Next, let us introduce in full detail the problem under consideration. Let $\s$ be the set of real, symmetric, traceless, $3 \times 3$ matrices. We assume $\Omega$ is a bounded domain in $\R ^2$ with $C^2$ boundary. For $(x,z)\in \pa \Om \times (0,1)$, let $g$ be a Lipschitz function given by
\begin{equation}\label{g1}
g(x,z)=\f{3 \beta}{2} \left( \z \otimes \z - \f{1}{3} \I \right) 
\end{equation}
or
\begin{equation}\label{g2}
g(x,z) = -3\beta \left( n(x) \otimes n(x) - \f{1}{3} \I \right) 
,\end{equation}
where $n$ is $\mathbb{S}^1\times \{0\}$--valued vector field and does not depend on $z$. We remark that in both cases, $g$ does not depend on $z$, so we will refer to $g$ as a function of $x$ from now on. Since minimizers of $f_s$ must have $\z$ as an eigenvector with eigenvalue $\beta$, we see that these are the only uniaxial minimizers of $f_s$. Let  $$\mathcal{A}_g=\{Q \in H^1: \restr{Q}{\partial \Omega \times (0,1)}=g\}.$$
Define $W:\s \rightarrow \R$ by
\begin{equation}\label{W}
W(Q)=f_{LdG}(Q)+2f_s(Q),
\end{equation}
and let us suppose that $W(g)$ is never $0$; if it were, the problem would be trivial to zero order. By the growth of $f_{LdG}$ at infinity, we know that $W$ achieves a global minimum value. Adding a constant to $W$, we can assume without loss of generality the minimum of $W$ is $0$.\par
Recall that
$$ F_\e(Q) = \begin{cases} 
      \displaystyle \int_{\Omega \times (0,1)} \left(\e|\nabla_xQ|^2 + \f{|\nabla_{z} Q|^2}{ \e ^3} + \f{1}{\e}f_{LdG}(Q)\right) \,dx\,dz \\ \hspace{.6cm}+\f{1}{\e} \displaystyle \int_{\Omega \times \{0,1\}} f_s(Q) \,dx,   & Q \in \mathcal{A}_g,\\
      \infty, & \text{otherwise,}
   \end{cases}
$$
for every $\e>0$. Let $P=\{Q \in \s : W(Q)=0\}$; then $P=\cup_{i=1}^n P_i$, where the $P_i$'s are the connected components of $P$. It is known that $n$ is finite by a theorem of Whitney \cite{whitney}. In the appendix we prove a result showing when $\z$ is an eigenvector for any element of $P$. Note that the growth of $W$ at infinity implies that each $P_i$ is compact.\par
We now introduce the limiting functional $F_0$ for the sequence of functionals $\{F_\e\}$. First, define for any two subsets $U,V$ of $\s$
\begin{align}
\label{asdf} d_{\sqrt{W}}(U,&V):=  \\ \notag \inf&\left\lbrace \displaystyle\int_a^b  \sqrt{W(\gamma(t))}|\gamma'(t)| \,dt \textrm{ s.t. }\gamma:[a,b]\rightarrow \s \text{ is Lipschitz},\gamma(a)\in U, \gamma(b)\in V \right\rbrace.
\end{align}
Note that the above integral is independent of parametrization. For any $Q:\Om \rightarrow P$, we set $A_i=\{x: Q(x)\in P_i\}$ and \begin{equation}\label{pii}\p_i(Q):=d_{\sqrt{W}}(P_i,Q).\end{equation}Define \begin{equation}\label{a00}\mathcal{A}_0 = \left\lbrace  Q\in L^1(\Om;\s) :  W(Q(x))=0 \textrm{ a.e., }\textrm{and }\mathbbm{1}_{A_i} \in BV(\Om) \right\rbrace .\end{equation}
For maps in $\mathcal{A}_0$, we will view them as maps defined on the cylinder $\Om \times (0,1)$ via the trivial extension to three dimensions. We will similarly view maps defined on the cylinder as defined on $\Om$ if they do not depend on $z$. We then define our candidate for the $\Gamma$--limit $F_0$ by 
\begin{equation}\label{f0intro}
F_0(Q) := \begin{cases} 
     \sum_{i,j=1}^n \p_i(P_j)\mathcal{H}^1(\partial^* A_i \cap \partial^* A_j) & Q \in \mathcal{A}_0,\\ \hspace{.6cm} +2\sum_{i=1}^n \int_{\pa A_i \cap \pa \Om} \p_i(g(x)) \,d\mathcal{H}^1(x) \\
      \infty & \text{otherwise.}
   \end{cases}
\end{equation}\par
\color{black}Finally, we present some preliminaries necessary for the proof of \cref{t1vague}. Following \cite{ambrosio}, we recall the definition of a $BV$ function taking values in a locally compact metric space $E$. For an arbitrary open set $D\subset R^3$, the space $BV(D,E)$ is the class of Borel functions $v:D \to E$ such that for any Lipschitz $\psi:E\to \R$,
\begin{align*}
\psi(v) \in BV(D;\R)
\end{align*}
and there exists a finite measure $m$ satisfying
\begin{align}\label{measure}
|\nabla (\psi \circ v)|(B) \leq \textup{Lip}(\psi) m(B)\textup{ for all Borel sets }B\subset D.
\end{align}
Here $|\nabla (\psi \circ v)|$ is the usual total variation measure for real--valued $BV$ functions. The total variation measure $|\nabla v|$ of $v$ is the least such measure satisfying \eqref{measure}. For our purposes, $E$ will be the canonical quotient space of $(\R^5,d_{\sqrt{W}})$, with distance function in $E$ also denoted by $d_{\sqrt{W}}$. In this space, each $P_i$ is identified with a single point. We will need the following form of lower--semicontinuity in $BV(D,E)$: if $d_{\sqrt{W}}(v_\e,v) \to 0$ in $L^1$ and $v_\e \in BV(D,E)$, then $v \in BV(D,E)$ and
\begin{align}\label{abslsc}
|\nabla v|(D) \leq \liminf_{\e \to 0} |\nabla v_\e|(D).
\end{align}
\par
The last preliminary result is a lemma from \cite{baldo} regarding approximating a partition of $\Om$ by polygonal domains and will be used in our construction of a recovery sequence.\color{black}
\begin{lemma}\label{poly}
\textup{(cf. \cite[Lemma 3.1]{baldo}).} Let $A_i$, $1\leq i \leq n$, be a partition of $\Om$ by sets of finite perimeter. There exists a sequence $\{A_1^h, \dots , A_n^h\}_{h \in \N}$ of collections of polygons such that
\begin{enumerate}[(i)]
\item $\Om \subset \cup_i A_i^h$ and the $A_i^h$ are pairwise disjoint up to sets of measure zero;
\item $\h (\pa A^h_i\cap \pa \Om)=0$;
\item $|A_i^h \triangle A_i| \rightarrow 0$ as $h \rightarrow \infty$;
\item $\sum_{i,j=1}^n \p_i(P_j)\mathcal{H}^1(\partial^* A_i^h \cap \partial^* A_j^h\cap \Om)  \rightarrow \sum_{i,j=1}^n \p_i(P_j)\mathcal{H}^1(\partial^* A_i \cap \partial^* A_j\cap \Om)$ as $h \rightarrow \infty$.
\end{enumerate}
\end{lemma}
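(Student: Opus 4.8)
The plan is to obtain this from the polyhedral approximation lemma for Caccioppoli partitions, \cite[Lemma 3.1]{baldo}, which is stated there in arbitrary dimension; here $N=2$ and a polygon is the planar instance of a polyhedron. The only structural fact needed about the weights in (iv) is that $\p_i(P_j)=d_{\sqrt{W}}(P_i,P_j)$ is symmetric, vanishes when $i=j$, and obeys the triangle inequality on $\{P_1,\dots,P_n\}$---equivalently, that $d_{\sqrt{W}}$ descends to a genuine metric on the quotient space $E$, as recalled above. Granting this, the functional in (iv) equals twice the total variation $|\nabla v|(\Om)$ of the piecewise--constant $E$--valued map $v$ taking the value $P_i$ on $A_i$ (a standard identity for metric--space--valued $BV$ functions, cf.\ \cite[Proposition 2.2]{baldo}), and the total variation is $L^1$--lower semicontinuous along such maps by \eqref{abslsc}.

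I would then argue as follows. Suppose we have produced, for each $h$, a polygonal partition $\{A_1^h,\dots,A_n^h\}$ satisfying (i), (ii), (iii) together with the one--sided estimate
\[
\limsup_{h\to\infty}\sum_{i,j=1}^n\p_i(P_j)\,\h(\pa^* A_i^h\cap\pa^* A_j^h\cap\Om)\le\sum_{i,j=1}^n\p_i(P_j)\,\h(\pa^* A_i\cap\pa^* A_j\cap\Om).
\]
By (iii), the maps $v_h:=\sum_i\mathbbm{1}_{A_i^h}P_i$, each in $BV(\Om,E)$ since piecewise constant on polygons, satisfy $\int_\Om d_{\sqrt{W}}(v_h,v)\,dx\le(\max_{i,j}d_{\sqrt{W}}(P_i,P_j))\sum_i|A_i^h\triangle A_i|\to0$, so \eqref{abslsc} gives $|\nabla v|(\Om)\le\liminf_h|\nabla v_h|(\Om)$; translating total variations back to weighted perimeters through the identity above, this is the inequality complementary to the displayed one, and the two together yield the convergence (iv). Hence it remains to carry out Baldo's construction of the polygonal partitions: mollify the indicators $\mathbbm{1}_{A_i}$ at a scale $\rho$ to obtain smooth functions summing to $1$; by the coarea formula and Sard's theorem choose generic superlevel parameters for which the induced partition has interfaces consisting of finitely many $C^1$ arcs whose total weighted length tends, as $\rho\to0$, to the weighted perimeter of $\{A_i\}$; then replace those arcs by inscribed polygonal curves, refining until the resulting $L^1$ error and the excess weighted length are both $o(1)$; finally diagonalize over $\rho$ and the refinement. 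Property (ii) is secured by carrying out the construction relative to a grid, and inscribing vertices, in generic position relative to $\pa\Om$: since $\pa\Om$ is a compact $C^2$ curve of finite $\h$--measure, for all but a null set of placements every edge of the resulting polygons meets $\pa\Om$ in an $\h$--null set.

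The main obstacle is precisely the displayed one--sided estimate, i.e.\ keeping the interfacial energy of the discretization from increasing in the limit. A crude discretization---for instance, coloring each cell of a fine grid by its majority phase---gives (i)--(iii) at once but has interfacial length of order $(\text{cell size})^{-1}|\Om|$, so its energy diverges; the content of \cite[Lemma 3.1]{baldo} is the careful coupling of the mollification scale to the level parameters that forces the discretized interfaces to concentrate near the original reduced boundaries rather than filling space. A second, structural point is that the partition constraint ties together the interface of each pair $A_i^h,A_j^h$, so all $n$ phases must be handled by a single simultaneous construction, rather than approximating each $A_i$ separately by the classical polyhedral approximation of a single Caccioppoli set; this is why a partition lemma is needed at all. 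Beyond reproducing that construction, the adaptation to the present setting is routine, the triangle inequality for $d_{\sqrt{W}}$ being the only feature of the weights that enters.
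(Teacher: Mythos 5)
The paper does not actually prove this lemma: it is stated as a preliminary with a \textup{cf.}~citation to Baldo's Lemma~3.1, so your reconstruction---reducing (iv) to the one--sided $\limsup$ estimate via the identity between the weighted partition perimeter and twice $|\nabla v|(\Om)$ for the $E$--valued step map plus lower semicontinuity \eqref{abslsc}, securing (ii) by genericity of the polygons relative to the $C^2$ curve $\pa\Om$, and running the mollification/coarea/Sard/polygonal--inscription construction for the $\limsup$ bound---is the right reading of what the paper invokes and is sound.

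One motivational aside is inaccurate: a majority--coloring grid discretization at scale $\rho$ of a finite--perimeter partition does not have interfacial length of order $\rho^{-1}|\Om|$, since only $O(\mathrm{Per}/\rho)$ cells meet the reduced boundary, giving discretized perimeter $O(1)$; the genuine obstacle is a fixed multiplicative overcount (e.g.\ a factor $\sqrt{2}$ for slanted interfaces) rather than divergence, which already destroys the equality in (iv) and so motivates the same careful construction. This does not affect the validity of the argument you actually use.
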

\section{Proof of $\Gamma$--convergence}\label{gproof}
In this section, we prove 
\begin{theorem}\label{t1}
For either choice \eqref{g1} or \eqref{g2} of boundary data $g$, the sequence $\{F_\e\}$ $\Gamma$--converges in the topology of $L^1(\Om\times(0,1);\s)$ to $F_0$. That is, 
\begin{enumerate}
\item for any $Q\in L^1(\Om\times(0,1);\s)$ and for any sequence $\{Q_\e\}$ in $L^1(\Om\times(0,1);\s)$, 
\begin{equation}\label{lsc}
	Q_\e \rightarrow Q \text{ in } L^1(\Om\times(0,1);\s) \text{ implies } \liminf_{\e \rightarrow 0} F_\e(Q_\e) \geq F_0(Q),
\end{equation} \label{statement1}
and
\item for each $Q\in L^1(\Om\times(0,1);\s)$ there exists a recovery sequence $\{Q_\e\}$ in $L^1(\Om\times(0,1);\s)$ satisfying 
\begin{equation}\label{reco1}
Q_\e \rightarrow Q_0 \text{ in } L^1(\Om\times(0,1);\s),
\end{equation}
\begin{equation}\label{reco2}
\lim_{\e\rightarrow 0} F_\e(Q_\e)= F_0(Q_0).
\end{equation} \label{statement2}
\end{enumerate}
\end{theorem}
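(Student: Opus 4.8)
The plan is to establish the two halves of the $\Gamma$-convergence statement separately, relying on the $BV$ framework with values in the quotient metric space $(E,d_{\sqrt W})$ set up in Section~\ref{pre}. For the liminf inequality \eqref{lsc}, I would first reduce to the case $\liminf_\e F_\e(Q_\e)<\infty$, extract a subsequence realizing the liminf, and observe that bounded energy forces $\frac1\e\int f_{LdG}(Q_\e)+\frac1{\e^3}\int|\nabla_z Q_\e|^2\to 0$ in a quantitative way. The $z$-derivative term being $O(\e^3)$ while the other elastic and potential terms are $O(\e^{-1})$ is exactly the mechanism that pushes recovery sequences toward $z$-independence; concretely, for a.e.\ slice or after an averaging/slicing argument one shows $Q_\e$ is asymptotically independent of $z$, so the surface integral $\frac1\e\int_{\Om\times\{0,1\}}f_s$ may be replaced to leading order by $\frac2\e\int_{\Om\times(0,1)}f_s$, giving an effective interior potential $\frac1\e W(Q_\e)$ with $W=f_{LdG}+2f_s$. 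Next, for each component $P_i$ apply the modified Modica--Mortola / Cauchy--Schwarz trick to the functions $\p_i\circ Q_\e=d_{\sqrt W}(P_i,Q_\e)$: the pointwise bound $|\nabla(\p_i\circ Q_\e)|\le \sqrt{W(Q_\e)}\,|\nabla Q_\e|\le \tfrac12\big(\e|\nabla Q_\e|^2+\tfrac1\e W(Q_\e)\big)$ shows $\{\p_i\circ Q_\e\}$ is bounded in $BV$, hence (after noting $\p_i\circ Q_\e\to\p_i\circ Q$ in $L^1$, so $Q$ takes values in $P$ a.e.\ and the $A_i$ partition $\Om$) one passes to the limit. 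The interior term then comes from the supremum-measure identity (the analogue of \cite[Prop.~2.2]{baldo}) applied to the slice-independent limit, which converts $2(\bigvee_i\mu_i)(\Om)$ into $\sum_{i,j}\p_i(P_j)\h(\pa^*A_i\cap\pa^*A_j)$, using the abstract lower semicontinuity \eqref{abslsc}.

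The genuinely new difficulty, and the one I expect to dominate the argument, is capturing the \emph{boundary} term $2\sum_i\int_{\pa A_i\cap\pa\Om}\p_i(g(x))\,d\h$ in the liminf inequality, since the Dirichlet condition $Q_\e=g$ on $\pa\Om\times(0,1)$ with $W(g)\neq 0$ forces a boundary layer connecting $g$ to the wells. I would handle this by a reflection/extension argument: extend $Q_\e$ to a slightly larger domain $\Om^\delta\supset\Om$ (using the $C^2$ regularity of $\pa\Om$, the distance function, and Fermi coordinates near $\pa\Om$), arranging the extension to be close to $g$ on a collar and to contribute the geodesic cost $\p_i(g)$ across $\pa\Om$; then apply the interior estimate on $\Om^\delta$ and let the collar width shrink. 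The key point is that the trace of the $BV$-limit on $\pa\Om$ is $g$ in the sense that any well-component $A_i$ touching $\pa\Om$ must pay at least $d_{\sqrt W}(P_i,g(x))$ there, which follows from a one-dimensional geodesic lower bound in the normal direction combined with the slicing estimates above.

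For the recovery sequence (statement (2)): without loss of generality $Q_0\in\mathcal A_0$ with finite $F_0$. First use \cref{poly} to replace the partition $\{A_i\}$ by a polygonal partition $\{A_i^h\}$ with $\h(\pa A_i^h\cap\pa\Om)=0$, converging in measure and in energy; by a diagonal argument it suffices to build recovery sequences for polygonal partitions. On the flat interfaces between two polygons $A_i^h,A_j^h$ one inserts the standard one-dimensional optimal-profile transition, realized as a Lipschitz geodesic $\gamma_{ij}:[0,1]\to\s$ from $P_i$ to $P_j$ with $\int\sqrt W|\gamma_{ij}'|=\p_i(P_j)$ reparametrized so that $\e|\gamma'|^2$ and $\tfrac1\e W(\gamma)$ balance, yielding interface cost $\p_i(P_j)\h(\cdot)+o(1)$; near the (measure-zero, lower-dimensional) junctions of several polygons one patches crudely at cost $o(1)$. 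The delicate part here is the boundary layer: along each edge of $\pa A_i^h$ lying on $\pa\Om$ one must interpolate from the fixed Dirichlet datum $g(x)$ (which varies along $\pa\Om$, especially in case \eqref{g2} where $n(x)$ rotates) into $P_i$, through a \emph{smoothly varying} family of geodesics. This is where the rotational symmetry of $f_{LdG}$ and $f_s$ flagged in the introduction enters: one uses an $SO(3)$-action that fixes $\z$ (so preserves $f_s$) to transport a single reference geodesic $P_i\rightsquigarrow g(x_0)$ along the boundary, guaranteeing the interpolating family is Lipschitz in $x$ and has the right energy $2\int_{\pa A_i^h\cap\pa\Om}\p_i(g)\,d\h+o(1)$ (the factor $2$ from the doubled surface term). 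Finally, make the whole construction $z$-independent (so $\nabla_z Q_\e=0$ and the surface integral is exactly twice the interior one), check $Q_\e\to Q_0$ in $L^1(\Om\times(0,1))$ and $F_\e(Q_\e)\to F_0(Q_0)$, and conclude by the usual diagonalization over $h$.
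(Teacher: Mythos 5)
Your proposal is correct and follows essentially the same route as the paper's proof. Both halves are organized around the same ideas: (i) for the liminf, replace the surface integral by the interior potential $\tfrac{2}{\e}f_s$ (the $O(\e)$ error estimate \eqref{decay1}--\eqref{decay2}), truncate to get boundedness, extend $Q_\e$ by $g$ to a slightly larger domain $\tilde\Om$ so that $\pa\Om$ becomes an interior jump set and the boundary term is captured by the ordinary interior lower bound, then shrink $\tilde\Om\setminus\Om$; (ii) for the recovery sequence, polygonal approximation via \cref{poly}, $z$-independence, interior optimal profiles, and, crucially, the rotational symmetry $\rt P_i\rt^T=P_i$ to transport a single reference geodesic $g(x_0)\rightsquigarrow P_i$ along $\pa\Om$ as a Lipschitz family of geodesics in $x$, followed by a diagonal argument.

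There is one mild stylistic divergence worth noting. In the liminf step you invoke the Baldo--Modica supremum-measure identity $2\bigl(\bigvee_i\mu_i\bigr)(\Om)=\sum_{i,j}\p_i(P_j)\h(\pa^*A_i\cap\pa^*A_j)$, whereas the paper instead phrases the same estimate purely in the $BV(D,E)$ framework of \cite{ambrosio}: one bounds $|\nabla(\pi\circ Q_\e)|$ by $\int\sqrt{W(Q_\e)}|\nabla Q_\e|$, applies the abstract lower semicontinuity \eqref{abslsc}, and then discards the absolutely continuous and Cantor parts of $|\nabla(\pi\circ Q)|$, keeping only the jump part $2\int_{J_Q}d_{\sqrt W}(Q^+,Q^-)\,d\h^1$. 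The two formalisms are equivalent here, so this is a matter of packaging rather than a real gap. Two technicalities you elide but would need to make precise if you fleshed this out: the geodesics $\gamma_{i,x}$ are necessarily discontinuous at one distinguished point of each boundary component (where the winding angle $\theta(x)$ jumps by a multiple of $2\pi$) and at the finitely many points $\pa A_i^h\cap\pa A_j^h\cap\pa\Om$; the paper removes $O(\e)$-size strips $\Om_0^\e$ at these bad points and interpolates, and also enforces the $90^\circ$-incidence condition \eqref{perpp} on the polygons so that the boundary layer based on $\gamma_{i,\sigma(x)}$ is single-valued nearby. You also need the $\delta$-regularized ODE for the profile (to reach the endpoint in finite $\e$-time despite $W$ degenerating there), which introduces a second parameter to diagonalize over. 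These are exactly the devices the paper uses, and your sketch is consistent with them.
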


For the rest of this section, we fix $g$ as specified in \eqref{g2} and proceed with the proof for this choice of $g$. The proof for $g$ given by \eqref{g1} is omitted since it is similar but simpler due to the constancy of $g$. Throughout the proofs, we will make use of the following symmetry of $W$ and its zero set $P$. 
Fix any $\theta \in [0,2\pi)$ and consider the rotation matrix
\begin{equation}\notag
 r_{\theta}=
  \left[ {\begin{array}{ccc}
   \cos(\theta) & -\sin(\theta) & 0 \\
   \sin(\theta) & \cos(\theta) & 0 \\
   0 & 0 & 1 \\
  \end{array} } \right].
\end{equation}
For any $Q$--tensor, which we can write as $Q=\sum_{i=1}^3\lambda_i v_i \otimes v_i$ in some orthonormal frame $\{v_i\}$,  we have 
\begin{equation}\notag
\rt Q \rt^T= \sum_{i=1}^3 \lambda_i \rt v_i \otimes \rt v_i.
\end{equation}
Recall that $f_{LdG}$ only depends on the eigenvalues $\lambda_i$ of $Q$, and so the preceding equality implies that $f_{LdG}(Q)=f_{LdG}(\rt Q \rt^T)$. Also,  for $f_s(Q)$, using the facts $\z=\rt \z = \rt^T \z$, $(\I - \z \otimes \z )\rt = \rt (\I - \z \otimes \z )$, and $|\rt v|=|v|$ for any $v\in \R^3$, we can calculate
\begin{equation}\notag
f_s(\rt Q \rt^T) = f_s(Q).
\end{equation}
It follows that $W(Q)=f_{LdG}(Q)+2f_s(Q)$ remains unchanged after conjugating $Q$ by $\rt$. From this we deduce that for any connected component $P_i$ of $P$,  conjugating $P_i$ by $\rt$ fixes $P_i$, i.e., 
\begin{equation}\label{rot}
\rt P_i \rt^T = P_i.
\end{equation}
We remark that this equation implies that any well $P_i$ of $W$ will in general be at least $1$-dimensional. Finally, note that conjugation by $\rt$ preserves the quantity $|Q|^2:=tr(QQ^T)$, the sum of the squares of the entries of $Q$. Using this observation and the preceding comments regarding $W$, we deduce that for any piecewise $C^1$ path $\gamma:[a,b]\rightarrow \s$, 
\begin{equation}\label{yay}
\int_a^b \sqrt{W(\gamma(t))}|\gamma'(t)| \,dt = \int_a^b \sqrt{W(\rt \gamma(t)\rt^T)}|(\rt\gamma\rt^T)'(t)| \,dt.
\end{equation}
We can use \eqref{yay} to show that for any $g$ satisfying \eqref{g2} and any $1\leq i \leq n$, the distance $d_{\sqrt{W}}(P_i,g(x))$ defined in \eqref{asdf} does not depend on the choice of $(x,z)$ in $\pa \Om \times (0,1)$. The previous equality implies that for any $(x,z)\in \pa \Om \times (0,1)$ and $\theta \in [0,2\pi)$,
\begin{equation}\notag
d_{\sqrt{W}}(P_i,g(x)) = d_{\sqrt{W}}(\rt P_i \rt^T, \rt g(x) \rt^T).
\end{equation}
But since $\rt P_i \rt^T = P_i$, we conclude that
\begin{equation}\label{charlie}
d_{\sqrt{W}}(P_i,g(x))=d_{\sqrt{W}}(P_i,\rt g(x) \rt^T)
\end{equation}
For any fixed $(x,z)$ in $\pa \Om \times (0,1)$, the set $P_0=g(\pa \Om \times (0,1))$, is contained in $\{\rt g(x) \rt^T:0 \leq \theta \leq 2\pi\}$, so in fact \eqref{charlie} yields
\begin{equation}\label{eq}
d_{\sqrt{W}}(P_i,P_0)=d_{\sqrt{W}}(P_i,g(x)).
\end{equation}\par
Let us now proceed with the proof of \cref{t1}. Throughout the estimates, whenever appropriate, the generic constant $C$ varies from line to line.
\begin{proof}[Proof of lower semicontinuity \eqref{lsc}]
Let $Q_\e$ converge to $Q$ in $L^1(\Omega\times(0,1);\s)$, and fix a subsequence $\e_m$ such that $\lim_{\e_m\rightarrow 0}F_{\e_m}(Q_{\e_m}) = \liminf_{\e\rightarrow 0}F_\e(Q_\e)$. We may without loss of generality assume that the limit $F_{\e_m}(Q_{\e_m})$ is finite and that the energies $F_{\e_m}(Q_{\e_m})$ are uniformly bounded by some $C$. Using this assumption we can make a calculation which will simplify the proof of lower semicontinuity by replacing integrals of $f_s$ over the top and bottom of the cylinder by the integral of $2f_s$ over the whole cylinder $\Om \times (0,1)$. For the matrix $Q_{\e_m}$, we denote the entry in the $i$--th row and $j$--th column by $q^{\e_m}_{ij}$. In the following calculation, it is helpful to rewrite $f_s(Q)$ as $f_s(Q)=\gamma(q_{13}^2+q_{23}^2)+\alpha(q_{33}-\beta)^2$. Now, the uniform energy bound implies that
\begin{align}\label{bd1}
\int_{\Omega\times(0,1)}|\nabla_{z}Q_{\e_m}|^2 \,dx\,dz 
&\leq C{\e_m}^3
\end{align}
and
\begin{equation}\label{bd2}
\int_{\Omega\times\{0,1\}}(\gamma(q_{13}^2+q_{23}^2)+\alpha(q_{33}-\beta)^2) \,dx\,dz 
\leq C{\e_m}.
\end{equation}
Using these two estimates it follows that 
\begin{align}
\notag \f{1}{{\e_m}}\left| \int_{\Omega\times(0,1)} f_s(Q_{\e_m})\right. & \left. \,dx\,dz -\int_{\Omega\times\{0\}}f_s(Q_{\e_m}) \,dx \right|  \\
 &= \f{1}{{\e_m}}\left|\int_{(0,1)}\int_{\Omega}(f_s(Q_{\e_m})-f_s(Q_{\e_m}(x,0))) \,dx \,dz \right| \notag \\
 &= \f{1}{{\e_m}}\left| \int_{(0,1)}\int_{\Omega}\int_0^{z} \f{\partial}{\partial z} f_s(Q_{\e_m}(x,t)) \,dt \,dx \,dz \right| \notag \\
 &\leq \f{C}{{\e_m}} \int_{\Omega}\int_0^1 \left|2q^{\e_m}_{13}\f{\partial}{\partial z}q^{\e_m}_{13}\right|+\left|2q^{\e_m}_{23}\f{\partial}{\partial z}q^{\e_m}_{23}\right|\notag +\left|2(q^{\e_m}_{33}-\beta)\f{\partial}{\partial z}q^{\e_m}_{33}  \right| \,dt \,dx \notag \\
 &\leq \f{C}{{\e_m}}\|q^{\e_m}_{13}\|_{L^2}\left\Vert\f{\partial}{\partial z}q^{\e_m}_{13} \right\Vert_{L^2}+\f{C}{{\e_m}}\|q^{\e_m}_{23}\|_{L^2}\left\Vert\f{\partial}{\partial z}q^{\e_m}_{23} \right\Vert_{L^2}\\ \notag &\hspace{1cm}+\f{C}{{\e_m}}\|q^{\e_m}_{33}-\beta\|_{L^2}\left\Vert\f{\partial}{\partial z}q^{\e_m}_{33}\right\Vert_{L^2}\\
 &\leq C\e_m. \label{decay1}
\end{align}
A similar estimate holds for the other surface term:
\begin{equation}\label{decay2}
\f{1}{{\e_m}}\left| \int_{\Omega\times(0,1)} f_s(Q_{\e_m}) \,dx\,dz -\int_{\Omega\times\{1\}}f_s(Q_{\e_m}) \,dx \right|\leq C\e_m
.\end{equation}
Together these estimates imply that in order to prove lower semicontinuity for the sequence $Q_{\e_m}$, we can replace $F_{\e_m}(Q_{\e_m})$ by 
\begin{equation}\label{fe}
\tilde{F}_{{\e_m}}(Q_{\e_m}):=\int_{\Omega\times(0,1)} \left({\e_m}|\nabla_xQ_{\e_m}|^2 + \f{|\nabla_{z} Q_{\e_m}|^2}{ {\e_m} ^3} + \f{1}{{\e_m}}W(Q_{\e_m})\right)\,dx\,dz
\end{equation}
where, as usual, $W$ is given by \eqref{W}, and prove the lower semicontinuity inequality for these energies.\par
Next, in order to capture the cost associated with $\partial \Omega\times(0,1)$ coming from the transition layer from the boundary data $g$, we will need to consider a smooth domain $\tilde{\Omega}$ containing $\Om$ and extend the domain of definition of each $Q_{\e_m}$ to $\tilde{\Omega}\times(0,1)$. We do this by setting each $Q_{\e_m}$ equal to $g(x,z)$ on the segment normal to $\pa \Om\times(0,1)$ at $(x,z)$. Note this extension preserves the $H^1$ regularity possessed by $Q_{\e_m}$. We also perform the same extension on $Q$. Restricting to a further subsequence (still denoted $Q_{\e_m}$) which converges pointwise to $Q$, we use Fatou's Lemma and the preceding remark to obtain:
$$
\int_{\Omega\times(0,1)}W(Q)\,dx\,dz \leq \liminf_{m\rightarrow \infty}\int_{\Omega\times(0,1)}W(Q_{\e_m})\,dx\,dz \leq \liminf_{m  \rightarrow \infty}\e_{m} \tilde{F}_{\e_m}(Q_{\e_m}) = 0;
$$
hence $W(Q)=0$ a.e. in $\Omega \times (0,1)$. Finally, we claim that we can truncate $Q_{\e_m}$, in the sense of truncating each component at a certain size, and reduce all of the energies $\tilde{F}_{\e_m}(Q_{\e_m})$ as well as preserve the $L^1$ convergence of $Q_{\e_m}$. The energies decrease if we truncate far enough from $0$ because of the quartic growth at infinity of $W$. The $L^1$ convergence is preserved because $Q$ must be bounded, given that the zero set of $W$ is compact.
\par
\color{black}Now, utilizing \cite[Proposition 4.3]{ambrosio}, we find that $\pi\circ Q_{\e_m} \in BV(\tilde{\Omega}\times(0,1),E)$, where $E$ is the quotient of $(\R^5,d_{\sqrt{W}})$ and $\pi$ is the projection from $\R^5$ onto $E$. Furthermore, according to \cite[Equation 4.6]{ambrosio}, $\pi\circ Q_{\e_m}$ satisfies the bound
\begin{align}\notag
|\nabla (\pi \circ Q_{\e_m}) |(\tilde{\Omega}\times(0,1)) &\leq \int_{\tilde{\Omega}\times(0,1)}\sqrt{W(Q_{\e_m})}|\nabla Q_{\e_m}| \,dx\,dz. \end{align}
Splitting the right hand side into two integrals and multiplying by $2$, we estimate
\filbreak\begin{align}\notag
2|\nabla (\pi \circ Q_{\e_m}) |(\tilde{\Omega}&\times(0,1))\notag\\ \notag \nopagebreak[4]&\leq  2\int_{\Omega\times(0,1)} \sqrt{W(Q_{\e_m})}|\nabla Q_{\e_m}| \,dx\,dz +2\int_{(\tilde{\Omega}\setminus \Omega)\times(0,1)} \sqrt{W(Q_{\e_m})}|\nabla Q_{\e_m}| \,dx\,dz \notag \nopagebreak[4]\\
 &\leq  \tilde{F}_{\e_m}(Q_{\e_m})+2\int_{(\tilde{\Omega}\setminus \Omega)\times(0,1)} \sqrt{W(Q_{\e_m})}|\nabla Q_{\e_m}| \,dx\,dz \label{i2}
\end{align}
We observe that by the definition of the extension of $Q_{\e_m}$ to $\tilde{\Omega}\times(0,1)$, the integrand of second integral is bounded by constant $C(g)$ depending only the boundary data $g$.\par
Next, since $d_{\sqrt{W}}$ is bounded above by the Euclidean distance on compact sets and the $Q_{\e_m}$ are bounded, we have that $d_{\sqrt{W}}(\pi\circ Q_{\e_m},\pi \circ Q) \rightarrow 0$ in $L^1$. Then the lower--semicontinuity property \eqref{abslsc} in $BV(\tilde{\Omega}\times(0,1),E)$ yields
\begin{align}\notag
2|\nabla (\pi \circ Q)|(\tilde{\Omega}\times(0,1)) \leq \liminf_{m\to \infty}2|\nabla (\pi \circ Q_{\e_m}) |(\tilde{\Omega}\times(0,1)).
\end{align}
By throwing away the absolutely continuous and Cantor parts of $|\nabla (\pi \circ Q)|$, cf. \cite[Equation 2.11]{ambrosio}, we can estimate the left hand side of the previous inequality to get
\begin{align*}
2\int_{J_Q}d_{\sqrt{W}}(Q^+(x),Q^-(x))\,d\mathcal{H}^2(x) \leq \liminf_{m\to \infty}2|\nabla (\pi \circ Q_{\e_m}) |(\tilde{\Omega}\times(0,1)).
\end{align*}
Here $J_Q\subset \tilde{\Omega}\times(0,1)$ is the jump set of $Q$ and $Q^+$, $Q^-$ are the trace values from opposite sides of $J_Q$, cf. \cite[Definition 1.3]{ambrosio}. Note that \eqref{bd1} implies that the $L^1$ limit $Q$ of $Q_{\e_m}$ cannot depend on $z$, so we may identify $Q$ with its canonical restriction to $\tilde{\Omega}$ and write
\begin{align}\label{abcd}
2\int_{J_Q}d_{\sqrt{W}}(Q^+(x),Q^-(x))\,d\mathcal{H}^1(x) \leq \liminf_{m\to \infty}2|\nabla (\pi \circ Q_{\e_m}) |(\tilde{\Omega}\times(0,1)).
\end{align}
Then $Q \in \mathcal{A}_0$ since $W(Q)=0$ a.e. and $Q$ is independent of $z$, hence
\begin{align*}\notag
F_0(Q) &= \sum_{i,j=1}^n d_{\sqrt{W}}(P_i,P_j)\mathcal{H}^1(\partial^* A_i \cap \partial^* A_j)+2\sum_{i=1}^n \int_{\pa A_i \cap \pa \Om} d_{\sqrt{W}}(P_i,g(x)) \,d\mathcal{H}^1(x),
\end{align*}
which is easily seen to be equal to
\begin{align*}
2\int_{J_Q}d_{\sqrt{W}}(Q^+(x),Q^-(x))\,d\mathcal{H}^1(x).
\end{align*}
Combining this with \eqref{abcd} and \eqref{i2} yields
\begin{equation}\notag
F_0(Q)=2\int_{J_Q}d_{\sqrt{W}}(Q^+(x),Q^-(x))\,d\mathcal{H}^1(x)\leq \liminf_{m\rightarrow \infty} \tilde{F}_{\e_m}(Q_{\e_m})+C(g)\operatorname{meas}(\tilde{\Omega}\setminus \Om).
\end{equation}
By choosing $\tilde{\Omega}$ so that $\tilde{\Om}\setminus \Om$ has small measure, we can make $C(g)\operatorname{meas}(\tilde{\Omega}\setminus \Om)$ arbitrarily small. The lower semicontinuity property \eqref{lsc} follows. The proof is now complete in the case where $g$ is as in \eqref{g2}; the proof for $g$ as in \eqref{g1} is similar.\color{black}
\end{proof}
\begin{proof}[Proof of the existence of a recovery sequence satisfying \eqref{reco1}, \eqref{reco2}] Fix any $Q_0 \in L^1$. If $\linebreak F_0(Q_0)=\infty$, then the constant sequence $Q_\e=Q_0$ satisfies \eqref{reco1} and \eqref{reco2}. Therefore we may assume $F_0(Q_0)\neq \infty$, and thus $Q_0 \in \mathcal{A}_0$. In particular, $Q_0$ does not depend on $z$. Our recovery sequence will also be independent of $z$; therefore in the rest of the proof, we will work on $\Omega$ rather than $\Omega \times (0,1)$. To obtain a recovery sequence for $Q_0$, we will approximate $Q_0$ by a sequence of functions $\{Q_h\}_{h\in \N}$ and furnish recovery sequences $\{ Q^h_\e\}_{\e>0}$ for each $h \in \N$.\par\color{black}
Before proceeding with the details, we summarize the main ingredients. In the construction of the recovery sequence for each $Q_h$, we will use the rotational symmetry of $W$ described at the beginning of this section. This symmetry allows us to create a family of geodesics under the degenerate metric $d_{\sqrt{W}}$ indexed by $x \in \pa \Om$ which are Lipschitz in $x$. These curves form the boundary layer near $\pa \Om$ which contributes the cost associated with the transition from the boundary data $g$ to the values of $Q_h$. After some modifications due to the fact that the boundary data $g$ is non--constant, we can utilize similar estimates as in \cite{baldo} to provide an upper bound for the $F_\e$ energy of $Q_\e^{h}$ coming from this boundary layer. Away from $\pa \Om$, we will use the construction from \cite[Section 4]{ambrosio}. A diagonal argument will then yield the desired result. The proof is divided into several steps.\par
$Step$ $1$. In this step, we construct the sequence $\{Q_h\}_{h\in \N}$. Applying \cref{poly} to the sets
$$A_i:=\{x \in \Om : Q_0(x) \in P_i\},$$
we obtain a sequence $\{\tilde{A}_1^h, \dots , \tilde{A}_n^h\}_{h \in \N}$ of collections of polygons which satisfy the conditions of \cref{poly}. We claim that we can modify the polygons $\tilde{A}_i^{h}$ and obtain polygonal partitions $\{A_1^{h},\dots,A_n^h\}$ for each $h$ such that for each $A_i^{h}$, in a neighborhood of $\pa \Om$,
\begin{equation}\label{perpp}
\pa A_i^{h}\textup{ is a union of line segments, each intersecting }\pa \Om\textup{ at a }90^\circ \textup{ angle;}
\end{equation}
and the $\{A_1^{h},\dots,A_n^h\}$ still satisfy the conditions of the lemma; see Fig. \ref{Fig. 1}. Indeed, for each $h$, this can be done by making the neighborhood of $\pa \Om$ on which we modify the polygons sufficiently small. By restricting to a further subsequence of the $A_i^h$, we can assume that
\begin{equation}\label{ae}
\mathbbm{1}_{A_i^h} \rightarrow \mathbbm{1}_{A_i} \textrm{ a.e. as } h \rightarrow \infty.
\end{equation}
Define
\[ Q_h = \begin{cases} 
      Q_0(x) & \textrm{if } x\in A^h_i \cap A_i \textup{ and }\dist(x,\pa \Om) \geq 1/h\\
      \alpha_i & \textrm{if } x\in A^h_i \setminus A_i \textup{ or }\dist(x,\pa \Om) < 1/h,
   \end{cases}
\]
where $\alpha_i$ is any fixed element of $P_i$ chosen independently of $h$. We have defined $Q_h$ to be locally constant near $\pa \Om$ to simplify the boundary layer construction there. Then \eqref{ae} implies that $Q_h \rightarrow Q_0$ a.e. on $\Om$ since for almost every fixed $x\in\Om$, $\mathbbm{1}_{A_i^h}(x)=\mathbbm{1}_{A_i}(x)$ for large enough $h$ depending on $x$. Applying the dominated convergence theorem yields
\begin{equation}\label{converge}
Q_h \rightarrow Q_0 \textrm{ in } L^1(\Omega;\s).
\end{equation}
In addition, due to \cref{poly}
\begin{equation}\label{dia}
F_0(Q_h) \rightarrow F_0(Q_0).
\end{equation}
Properties \eqref{converge} and \eqref{dia} will allow us to diagonalize recovery sequences for $Q_h$ to obtain a recovery sequence for $Q_0$.\par\color{black}
\afterpage{
\begin{figure}
\includegraphics{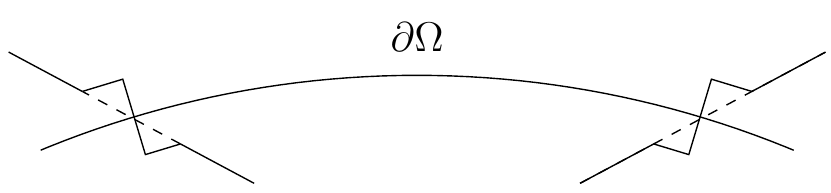}
\caption{The dashed lines represent the portions of $\pa \tilde{A}_i^{h}$ which are modified to obtain $\pa A_i^{h}$. The $A_i^{h}$ can be constructed so that the perimeter of $A_i^{h}$ is arbitrarily close to the perimeter of $ \tilde{A}_i^{h}$.}
 \label{Fig. 1}
\end{figure}
}
$Step$ $2$. We fix $h \in \N$ and present some preliminaries necessary for construction of the boundary layer near $\pa \Om$ for the recovery sequence of $Q_h$. First, in order to construct the boundary layer near $\pa \Om$ which bridges a constant matrix $\alpha_i$ to the boundary data $g$, we need to construct a family of paths indexed by $x\in \pa \Om$ connecting $\alpha_i$ to $g(x)$ which have sufficient smoothness in $x$ and are geodesics in the degenerate metric $d_{\sqrt{W}}$. In general, this might be quite difficult, but in our particular case, the choice of boundary data $g$ and the symmetry of $W$ will facilitate the process. We recall by \eqref{eq} that $d_{\sqrt{W}}(g(x),\alpha_i)$ does not depend on the choice of $x \in \pa \Om$. Therefore, for each connected component of $\pa \Om$, we fix a point $x_l$, $1 \leq l \leq L$, find geodesics connecting $\alpha_i$ to $g(x_l)$, and then suitably modify them to obtain geodesics connecting $\alpha_i$ to other $g(x)$.\par
For any $x_l\in \pa \Om$, and let $\gamma_{i,x_l}:[0,1/2]\rightarrow \mathcal S$ satisfy $\gamma_{i,x_l}(0)=g(x_l)$, $\gamma_{i,x_l}(1/2)=\alpha_i$ and
\begin{equation}\label{almost}
\int_0^{1/2} \sqrt{W(\gamma_{i,x_l}(t))}|\gamma_{i,x_l}'(t)| \,dt = d_{\sqrt{W}}(\alpha_i,g(x_l)).
\end{equation}
The existence of such a geodesic follows from \cite{zun}. We can without loss of generality assume that $|\gamma_{i,x_l}'|$ is constant. For each $x$ in the connected component of $\pa \Om$ containing $x_l$, we have $g(x) = r_{\theta(x)} g(x_l) r_{\theta(x)}^T$ for some $\theta(x)$ and $1 \leq l \leq L$, and we can choose the function $\theta:\pa \Om\to \R$ to be continuous on $\pa \Om \setminus x_l$. Near $x_l$, the value of $\theta(x)$ approaches 0 coming from one side and a possibly non-zero multiple of $2\pi$ from the other side. We rotate $\gamma_{i,x_l}$ so it connects $g(x)$ to $r_{\theta(x)} \alpha_i r_{\theta(x)}^T$ and then connect that point to $\alpha_i$ by letting $\theta(x)$ go to $0$. We define
\begin{equation}\label{gamma} \gamma_{i,x}(t) = \begin{cases} 
      r_{\theta(x)}\gamma_{i,x_l}(t)r_{\theta(x)}^T & \textrm{if } 0\leq t \leq 1/2\\
      r_{\theta(x)(-2t+2)} \alpha_i r_{\theta(x)(-2t+2)}^T & \textrm{if } 1/2< t \leq 1.
   \end{cases}
\end{equation}
It is straightforward to see that away from $x_l$, $\gamma_{i,x}(t)$ is locally Lipschitz in $x$ and $t$. In fact, viewing $\gamma_{i,x}(t)$ as a function of $t$ indexed by $x$, it can be quickly seen that the Lipschitz constants for each $\gamma_{i,x}(\cdot)$ are uniformly bounded in $x$. Note that for $t\geq 1/2$, $W(\gamma_{i,x}(t))=0$. We can use this fact and \eqref{yay} to calculate
\begin{align} \notag
\int_0^1 \sqrt{W(\gamma_{i,x}(t))}|\gamma_{i,x}'(t)| \,dt 
&= \int_0^{1/2}\sqrt{W(\gamma_{i,x_l}(t))}|\gamma_{i,x_l}'(t)| \,dt \\ \notag
&=d_{\sqrt{W}}(\alpha_i,g(x_l)) \\ \notag
&=d_{\sqrt{W}}(\alpha_i,g(x)) .
\end{align}
\par
We recall the following facts from \cite[Lemma 3.2]{baldo} which are useful in the construction. Consider the family of ordinary differential equations indexed by $x\in \pa \Om$ and $i=1,\dots , n$:
\begin{equation}\label{be3.2}
\left( \f{\partial}{\partial t} y_{\e,\delta}^{i,x}(t)\right)^2 = \f{\delta+ W(\gamma_{i,x}(y^{i,x}_{\e,\delta}(t)))}{\e^2|\gamma_{i,x}'(y_{\e,\delta}^{i,x}(t))|^2},
\end{equation}
where $0<\delta \ll 1$ is a fixed constant. We have for every $\e>0$, constants $C_{1,\delta}$, $C_{2,\delta}$, and $C_{3,\delta}$ independent of $\e$ and $x$, and strictly increasing solutions $y_{\e,\delta}^{i,x}:[0,C_{i,x}]\rightarrow[0,1]$ to \eqref{be3.2} such that
\begin{equation}\label{ii}C_{i,x} < C_{1,\delta}\e,\end{equation}
\begin{equation}\label{iii}y_{\e,\delta}^{i,x}(0)=0 \textup{ and }y_{\e,\delta}^{i,x}(C_{i,x})=1,\end{equation}
\begin{equation}\label{iv}\|(y_{\e,\delta}^{i,x})^{-1}\|_{L^\infty} \leq C_{2,\delta}\e,\end{equation}
\begin{align}\label{v}\textup{for fixed }t\textup{, }y_{\e,\delta}^{i,x}\textup{ is Lipschitz in }x\textup{ away from } x_0 \textup{ with }\\ \notag\textup{Lipschitz constant }C_{3,\delta}\textup{ independent of }t \textup{ as well }.\end{align}
The first three properties, \eqref{ii}--\eqref{v}, are all established in or follow quickly from \cite[Lemma 3.2]{baldo} along with the uniform Lipschitz bounds on $\gamma_{i,x}(\cdot)$. We remark that the fourth item, \eqref{v}, follows from applying Gronwall's inequality. It is convenient to have the $y_{\e,\delta}^{i,x}$ defined on one common interval, so let us extend each $y_{\e,\delta}^{i,x}$ to $[0,C_{1,\delta}\e]$ by setting $y_{\e,\delta}^{i,x}=1$ for $t>C_{i,x}$.\par\color{black}
$Step$ $3$. We will now present the construction of the recovery sequence $\{Q_\e^{h}\}_{\e>0}$ for $Q_h$ for fixed $h$. We recall from the first step that $Q_h$ is one of the approximations of the original element $Q_0$ of $\mathcal{A}_0$. We will in fact construct sequences $\{Q_\e^{h,\delta} \}$ for $0<\delta \ll 1$, prove that
\begin{equation}
\lim_{\delta\rightarrow 0} \limsup_{\e\rightarrow 0} F_\e(Q_\e^{h,\delta} )=F_0(Q_h),
\end{equation}
and then diagonalize over $\delta$ and $\e$. Let us fix $0 < \delta \ll 1$. By utilizing the result from \cite[Section 4]{ambrosio}, we obtain a sequence $\{\hat{Q}_\e^{h}\}$ such that
\begin{equation}\label{baldo1}
\hat{Q}_\e^{h} \rightarrow Q_h \textup{ in } L^1(\Om;\s) \textup{ as }\e \rightarrow 0
\end{equation}
and
\begin{equation}\label{plug}
\limsup_{\e\rightarrow 0} F_\e(\hat{Q}_\e^{h}) \leq \sum_{i,j=1}^n d_{\sqrt{W}}(P_i,P_j)\h(\pa^* A_i^h \cap \pa^*A_j^h)
\end{equation}
For now, we assume that there exists $C_{4}$ such that \begin{equation}\label{inn}\textup{if }x\in A_i^h\textup{ with }\dist(x,\pa A_i^h)\geq C_{4}\e\textup{ and }\dist(x,\pa \Om)\leq C_4,\textup{ then }\hat{Q}_\e^{h}(x)=\alpha_i.\end{equation} This assumption simplifies the calculations to follow, and we will see at the end of the proof that this assumption is not restrictive.
\par
The $\hat{Q}_\e^{h}$ do not assume the boundary values $g$, and the right hand side of \eqref{plug} does not account for cost associated to the boundary of $\Om$. To address these issues, we will modify $\hat{Q}_\e^{h}$ near $\pa \Om$; away from $\pa \Om$, the $\hat{Q}_\e^{h}$ will be unchanged. Briefly, we will set $Q_\e^{h,\delta}$ to be $\hat{Q}_\e^{h}$ on $\Om_{C_{1,\delta}\e}:=\{x\in \Om:\dist (x,\pa\Om)> C_{1,\delta}\e\}$ and then, using the curves $\gamma_{m,x}$, define a boundary layer which bridges the values of $\hat{Q}_\e^{h,\delta}$ to the boundary data $g(x)$ along segments normal to $\pa \Om$. Recall from \eqref{perpp} that each $\pa A_i^h$ intersects $\pa \Om$ at a $90^\circ$ angle; this fact allows us to avoid technicalities which would arise from a point $x\in A_i^h \cap \pa \Om_{C_{1,\delta}\e}$ whose projection $\sigma(x)$ onto $\pa \Om$ is in $A_j^h$ for $j\neq i$. We will need a different strategy near $x\in \pa \Om\cap\pa A_i^h \cap \pa A_j^h$ for $i\neq j$, due to the obvious fact that $\gamma_{i,x}$ do not vary smoothly in $i$. In addition, each $\gamma_{i,x}$ was not continuous at $x_l\in\pa \Om$, so we will need to cut out a small set near each $x_l$ and modify our analysis there as well.\par
For each $x\in \Om \setminus \Om_{C_{1,\delta}\e }$, consider its projection $\sigma(x)$ onto $\pa \Om$ and the inward pointing unit normal vector $\nu(\sigma(x))$ to $\pa\Om$ at $\sigma(x)$. By the condition \eqref{perpp} on $A_i^h$, we see that $\{\pa A_i^h \cap \pa A_j^h \cap \pa \Om: i\neq j  \}$ is finite; we enumerate this set as $\{x_l\}_{l=L+1}^{\tilde{L}}$. Let $\mathcal{C}_\e\subset \pa \Om$ be a finite union of curves $\mathcal{C}_\e^l$ contained in $\pa \Om$ for $1\leq l \leq \tilde{L}$, each of length $2C_{4}\e$ and centered around an $x_l$, so that $x_l$ divides $\mathcal{C}_\e^l$ into two pieces of length $C_{4}\e$. We define $$\Om_0^\e=\{x \in \Om \setminus \Om_{C_{1,\delta}\e} : \sigma(x) \in \mathcal{C}_\e \}.$$ 
Let us denote by $d(x)$ the distance from $x$ to $\pa \Om$ for $x \in \Om$. We now define
\[ Q_\e^{h,\delta}(x) = \begin{cases} 
      \hat{Q}_\e^{h}(x), & \textrm{if } x \in \Om_{C_{1,\delta}\e}, \\
      \gamma_{i,\sigma(x)}(y_{\e,\delta}^{i,\sigma(x)}(d(x))), & \textrm{if } x\in (\Om \setminus (\Om_{C_{1,\delta}\e}\cup \Om_0^\e).
   \end{cases}
\]
We note that for $x\in \pa \Om_{C_{1,\delta}}$ such that $x\in A_i^h$ but $\sigma(x) \in A_j^h$ for $i\neq j$, defining $Q_\e^{h,\delta}$ using $\gamma_{i,\sigma(x)}$ will result in a jump discontinuity. However, since $\pa A_j^h$ is normal to $\pa \Om$ in a neighborhood of $\pa \Om$ by \eqref{perpp} and $\hat{Q}_{\e}^{h}(x)=\alpha_i$ if $\dist(x,\pa A_i^h)\geq C_{4,\delta}\e$, for small enough $\e$, any such $x$ must be in $\Om_0^\e$. Hence for small $\e$, $Q_\e^{h,\delta}$ as defined thus far is Lipschitz. We have not yet defined $Q_\e^{h,\delta}$ on $\Om_0^\e$, but we will do so at the end of the proof.  \par\color{black}
First, we remark that $Q_\e^{h,\delta} \rightarrow Q_h$ in $L^1$ as $\e \rightarrow 0$ due to \eqref{baldo1} and the fact that $Q_\e^{h,\delta}$ is bounded on $\Om \setminus \Om_{C_{1,\delta}\e}$ independently of $\e$. Next, we estimate $F_\e(Q_\e^{h,\delta})$. Let us denote for any $U \subset \Om$ and tensor $Q$
$$
F_\e(Q,U)= \int_{U}\left(\e|\nabla Q|^2+\f{1}{\e}W(Q)\right)\,dx
.$$
Using \eqref{plug} yields
\begin{align}\notag
\limsup_{\e \rightarrow 0} F_\e(Q_\e^{h,\delta}, \Om_{C_{1,\delta}\e}) &=
\limsup_{\e \rightarrow 0} F_\e(\hat{Q}_\e^{h,\delta}, \Om_{C_{1,\delta}\e}) \\ \label{use}
&\leq\sum_{i,j=1}^n d_{\sqrt{W}}(P_i,P_j)\h(\pa A_i^h \cap \pa A_j^h\cap \Om).
\end{align}
Let us denote by $\Om_i^\e$ the set $\{x\in\Om \setminus (\Om_{C_{1,\delta}\e}\cup \Om_0^\e):x \in A_i^h  \}$. We now examine $F_\e(Q_\e^{h,\delta},\Omega_i^\e)$. We will make use of the map 
\begin{equation}\notag
P_t : \pa \Om \rightarrow \{x \in \Om : d(x)=t\}
\end{equation}
which sends $x \in \pa \Om$ to $x+t\nu(x)$, where $\nu(x)$ is the inward pointing normal to $\pa \Om$ at $x$. Because of the $C^2$ assumption on $\pa \Om$, $P_t$ is a $C^1$--diffeomorphism with Jacobian $J$ satisfying
\begin{equation}
\label{jac}
|J(P_t)(x)-1|\leq Ct
\end{equation}
for some $C>0$ independent of $x$ and $t$.\par
For each $x \in \Om_i^\e$, let $\tau=\tau(x)$ be a unit vector tangent to the level set of $d$ at $x$ and $\eta=\eta(x)$ be a unit vector perpendicular to $\tau$. We write 
\begin{align}
\notag F_\e(Q_\e^{h,\delta},\Om_{i}^\e) &= \int_{\Om_{i}^\e} \left(\e|\nabla Q_\e^{h,\delta}|^2 + \f{1}{\e}W(Q_\e^{h,\delta}) \right)\,dx\\ \notag
&= \int_{\Om_{i}^\e} \left(\e\left|\f{\pa}{\pa \tau}Q_\e^{h,\delta}\right|^2+\e\left|\f{\pa}{\pa \eta}Q_\e^{h,\delta}\right|^2 + \f{1}{\e}W(Q_\e^{h,\delta})\right)\,dx .\notag 
\end{align}
Now from \eqref{v}, it follows that
\begin{equation}\notag
\f{\pa}{\pa \tau}(Q_\e^{h,\delta}) = \f{\pa}{\pa \tau}(\gamma_{i,\sigma(x)}\circ y_{\e,\delta}^{i,\sigma(x)}\circ d)(x) 
\end{equation}
is bounded independently of $\e$ and $x$. Since $|\Om_i^\e|\leq C \e $, where $C$ is independent of $\e$, we have
\begin{equation}\notag
\int_{\Om_{i}^\e} \e\left|\f{\pa}{\pa \tau}Q_\e^{h,\delta}\right|^2 \,dx \leq \int_{\Om_{i}^\e} C\e \,dx \leq C\e^2.
\end{equation}
\color{black}Using now the coarea formula and the fact that $|\nabla d|=1$, we write
\begin{align}
\notag F_\e&(Q_\e^{h,\delta},\Om_{i}^\e)\\ \notag &\leq   \int_{\Om_{i}^\e} \left( \e\left|\f{\pa}{\pa \eta}Q_\e^{h,\delta}\right|^2 + \f{1}{\e}W(Q_\e^{h,\delta})  \right) \,dx +C\e^2\color{black} \\ \notag
&\leq \int_{\Om_{i}^\e} \left(\e\left|\f{\pa}{\pa \eta}(\gamma_{i,\sigma(x)}\circ y_{\e,\delta}^{i,\sigma(x)}\circ d)(x)\right|^2+\f{1}{\e}W(\gamma_{i,\sigma(x)}\circ y_{\e,\delta}^{i,\sigma(x)}\circ d)(x) \right) |\nabla d(x)| \,dx  \notag \\ \notag &\hspace{1cm} +C\e^2\color{black} \\ \notag
&=  \int_0^{C_{1,\delta}\e} \int_{\{d=t\}\cap \Om_i^\e} \left( \e (\gamma_{i,\sigma(x)}'(y_{\e,\delta}^{i,\sigma(x)}(t)))^2((y_{\e,\delta}^{i,\sigma(x)})')^2+ \f{1}{\e}W(\gamma_{i,\sigma(x)}\circ y_{\e,\delta}^{i,\sigma(x)})(t)\right) \,d\h(x) \,dt \\ \notag  &\hspace{1cm} +C\e^2\color{black}.
\end{align}
Then, recalling \eqref{be3.2} and using the map $P_t$, we have:
\begin{align}
\notag F_\e(Q_\e^{h,\delta},&\Om_{i}^\e) \\ \notag &\leq \int_0^{C_{1,\delta}\e}\int_{\{d=t\}\cap \Om_i^\e} 2|\gamma_{i,\sigma(x)}'(y_{\e,\delta}^{i,\sigma(x)}(t))||\delta+W(\gamma_{i,\sigma(x)}\circ y_{\e,\delta}^{i,\sigma(x)})(t)) |^{1/2} \,d\h(x) \,dt \\ \notag  &\hspace{1cm}+C\e^2\color{black} \\ \notag
&= \int_0^{C_{1,\delta}\e }\int_{\pa \Om \cap \Om_i^\e} 2|\gamma_{i,x}'(y_{\e,\delta}^{i,x}(t))||\delta +W(\gamma_{i,x}\circ y_{\e,\delta}^{i,x})(t)) |^{1/2} |J(P_t(x))|\,d\h(x) \,dt\\ \notag  &\hspace{1cm}+C\e^2\color{black}\\ \notag
&= \int_{\pa \Om \cap \Om^\e_i}\int_0^{C_{1,\delta}\e} 2|\gamma_{i,x}'(y_{\e,\delta}^{i,x}(t))||\delta+W(\gamma_{i,x}\circ y_{\e,\delta}^{i,x})(t)) |^{1/2} |J(P_t(x))| \,dt\,d\h(x)\\ \notag &\hspace{1cm} +C\e^2\color{black}.
\end{align}
Making the change of variables $s=y_{\e,\delta}^{i,x}(t)$ and recalling from the definition of $\Om_i^\e$ that $\pa \Om \cap \Om_i^\e \subset A_i$, we have
\begin{align}\notag
F_\e(Q_\e^{h,\delta},&\Om_{i}^\e)  \\ \notag &\leq\int_{\pa \Om \cap A_i}\int_0^{1} 2|\gamma_{i,x}'(s)||\delta+W(\gamma_{i,x}(s)) |^{1/2} |J(P_{(y_{\e,\delta}^{i,x})^{-1}(s)}(x))| \,ds \,d\h(x) +C\e^2\color{black}.
\end{align}
By using \eqref{jac} and the bound $\|(y_{\e,\delta}^{i,x})^{-1} \|_{L^\infty}\leq C_{2,\delta}\e$ from \eqref{iv}, we estimate $|J(P_{(y_{\e,\delta}^{i,x})^{-1}(s)})|\leq 1+C\e$. Taking the limsup on both sides as $\e\rightarrow 0$ gives
\begin{align}\notag
\ls F_\e(Q_\e^{h,\delta},\Om_{i}^\e)
&\leq \ls\int_{\pa \Om \cap B_i}\int_0^{1} 2|\gamma_{i,x}'(s)||\delta+W(\gamma_{i,x}(s)) |^{1/2}\,ds \,d\h(x)\\ \notag &\hspace{1cm} + C\e+C\e^2\color{black} \\ \label{refer} &\leq 2 \int_{\pa \Om \cap A_i^h} \int_0^{1} |\gamma_{i,x}'(s)||\delta+W(\gamma_{i,x}(s)) |^{1/2}\,ds \,d\h(x).
\end{align}\par
Using the right hand side of the previous inequality as our estimate for $\ls F_\e(Q_\e^{h,\delta},\Om_i^\e)$ yields, upon summing,
\begin{align}\notag
\ls \sum_{i=1}^n  F_\e(Q_\e^{h,\delta},\Om_{i}^\e)  &\leq 2 \sum_{i=1}^n \int_{\pa \Om\cap A_i^h} \int_0^{1} |\gamma_{i,x}'(s)||\delta+W(\gamma_{i,x}(s)) |^{1/2}\,ds \,d\h(x) \\ \notag
&= 2\sum_{i=1}^n \int_{ \pa \Om \cap A_i^h} d_{\sqrt{W}}(g(x),\alpha_i) \,d\mathcal{H}^1(x)+C\sqrt{\delta}
 \\ \label{part1}
&=2\sum_{i=1}^n \int_{ \pa \Om\cap A_i^h }d_{\sqrt{W}}(g(x),P_i) \,d\mathcal{H}^1(x)+C\sqrt{\delta},
\end{align}
where $C$ only depends on the the lengths of the $\gamma_{i,x}$, which in turn depend on $h$. Combining \eqref{use} and \eqref{part1}  yields
\begin{equation}\label{inside}
 \ls   F_\e(Q_\e^{h,\delta},\cup_{i=1}^n \Om_i^\e \cup \Om_{C_{1,\delta}\e}) \leq F_0(Q_h)+C_h\sqrt{\delta},
\end{equation}
We will take the infimum over $\delta$ at the end of the proof, so as not to confuse the order in which $\delta$ and $\e$ are sent to zero in the remaining estimates.\par
To finish proving the estimate
\begin{equation}\notag
\ls F_\e(Q_\e^{h,\delta},\Om) \leq F_0(Q_h)+C_h\sqrt{\delta},
\end{equation}
it suffices to define $Q_\e^{h,\delta}$ on $\Om_0^\e$ and show that
\begin{equation}\notag
F_\e(Q_\e^{h,\delta},\Om_0^\e)\rightarrow 0
\end{equation}
as $\e \to 0$ for any fixed $\delta>0$. We will define $Q_\e^{h,\delta}$ on this set so that the integrand $\e|\nabla Q_\e^{h,\delta}|^2+\f{1}{\e}W(Q_\e^{h,\delta})$ is $O(1/\e)$ there, and then show that the measure of this set is $O(\e^2)$. Since $\delta$ is fixed in this argument, we will suppress its appearance in the constants for the following estimates. \color{black}We assume that $Q_\e^{h,\delta}$ restricted $\overline{\Omega}\setminus (\Om_{C_{4}}\cup \Om_0^\e))$ satisfies:
\begin{equation}\notag
\begin{cases} 
      \|Q_\e^{h,\delta}\|_{L^\infty(\overline{\Om} \setminus(\Om_{C_{4}}\cup \Om_0^\e))}\leq C \\ \|\nabla Q_\e^{h,\delta} \|_{L^\infty(\overline{\Omega}\setminus (\Om_{C_{4}}\cup \Om_0^\e))}\leq \f{C}{\e},\\
   \end{cases}
\end{equation}
where $C$ is independent of $\e$. If $Q_\e^{h,\delta}$ did not satisfy these estimates, then $Q_\e^{h,\delta}$ can be modified inside $\Om_{C_4}$ near $\pa \Om_{C_4}$ and extended to $\Om \setminus \Om_{C_4}$ using level sets of the distance function so as to meet these requirements. This can be done by utilizing the techniques of \cite[Lemma 3.2]{fb}, in which it is shown that the boundary values of a sequence such as $Q_\e^{h,\delta}$ can be changed without increasing the total energy in the limit. The application of \cite[Lemma 3.2]{fb} also allows us to assume that \eqref{inn} holds as well. \par\color{black}
Let us consider $\{x\in \Om_0^\e: \sigma(x)\in \mathcal{C}_\e^l\}\subset\Om_0^\e$. Since $\{x\in \Om_0^\e: \sigma(x)\in \mathcal{C}_\e^l\}$ is a ``strip" of length $2C_{4,\delta}\e$ and width $C_{1,\delta}\e$, it is easy to see from the coarea formula that for fixed $\delta$,
\begin{equation}\notag
|\Om_0^\e| = O(\e^2).
\end{equation}
Now, since on the boundary of this strip, $\|\nabla Q_\e^{h,\delta} \|\leq \f{C}{\e}$, it can be quickly seen that $Q_\e^{h,\delta}$ can be extended to a Lipschitz function satisfying\begin{equation}\notag
\begin{cases} 
      \|Q_\e^{h,\delta}\|_{L^\infty(\{x\in \Om_0^\e: \sigma(x)\in \mathcal{C}_\e^l\})}\leq C \\ \|\nabla Q_\e^{h,\delta} \|_{L^\infty(\{x\in \Om_0^\e: \sigma(x)\in \mathcal{C}_\e^l\})}\leq \f{C_{\kappa}}{\e},\\
   \end{cases}
\end{equation}
where $C_\kappa\geq C$ is a constant depending on the curvature $\kappa$ of $\pa \Om$. If $\pa \Om$ is flat, so that $\Om_0^\e$ is a rectangle, then linearly interpolating the values of $Q_\e^{h,\delta}$ from the boundary of the rectangle along diagonal segments across the rectangle works and gives $C_\kappa=C$. We estimate
\begin{equation}\label{woo}
\int_{\{x\in \Om_0^\e: \sigma(x)\in \mathcal{C}_\e^l\}} \left(\e|\nabla Q_\e^{h,\delta}|^2+\f{W(Q_\e^{h,\delta})}{\e}\right)\,dx \leq \f{C_\kappa}{\e}|\Om_0^\e|=O(\e)
.\end{equation}
Since $\Om_0^\e$ is the union of the sets $\{x\in \Om_0^\e: \sigma(x)\in \mathcal{C}_\e^l\}$,  we have
\begin{equation}\label{woo1}
 \int_{\Om_0^\e} \left(\e|\nabla Q_\e^{h,\delta}|^2+\f{W(Q_\e^{h,\delta})}{\e}\right)\,dx =O(\e).
\end{equation}\par
Summing the estimates \eqref{part1}, \eqref{woo}, and \eqref{woo1} gives
\begin{equation}\notag
\ls F_\e(Q_\e^{h,\delta}) \leq  F_0(Q_h)+C_h\sqrt{\delta},
\end{equation}
so that \begin{equation}\label{why}\lim_{\delta\rightarrow 0}\ls F_\e(Q_\e^{h,\delta})\leq F_0(Q_h).\end{equation} Now diagonalizing over $\delta$ and $\e$, we obtain a recovery sequence $\{Q_\e^h\}_{\e>0}$ for $Q_h$.\color{black}\par
\textit{Conclusion of proof:} Combining \eqref{dia} and \eqref{why}, we have $$\lim_{h \to \infty} \lim_{\delta \to 0}\limsup_{\e \to 0} F_\e(Q_\e^{h,\delta})\leq \lim_{h\rightarrow \infty} F_0(Q_h)=F_0(Q_0) .$$ Having already diagonalized $\{Q_\e^{h,\delta}\}_{\e>0}$ over $\delta$ and $\e$ to obtain a recovery sequence $\{Q_\e^{h}\}_{\e>0}$ for $Q_h$, we diagonalize the recovery sequences $\{Q_\e^{h}\}_{\e>0}$ over $\e$ and $h$ and obtain a recovery sequence $\{Q_\e\}_{\e>0}$ for $Q_0$. The case for constant boundary data $g$ is simpler and follows from the above calculations.
\end{proof}
\section{Local Minimizers of $F_\e$}\label{local}
Now that we have proven $\Gamma$--convergence, we aim to prove the existence of local minimizers of $F_\e$. A similar theorem was proved for the Allen-Cahn functionals in \cite{ks89} by minimizing the functionals in an $L^1$--ball around an isolated $L^1$--local minimizer of the $\Gamma$--limit. The proof required the existence of such an isolated local minimizer in addition to $L^1$--compactness for any sequence of functions with bounded energies. Neither of these conditions holds in the problem we are considering. Indeed, for any local minimizer $Q_0$ of $F_0$, we have that $F_0(\rt^T Q_0 \rt)=F_0(Q_0)$. This follows from the rotational symmetry of $W$ described in the beginning of Section \ref{gproof}. Also, since the zero set $P$ of $W$ is higher--dimensional, as opposed to a finite collection of points, we cannot obtain $L^1$--compactness of a sequence of minimizers $Q_\e$. To account for both of these issues, we introduce the distance
$$
\Lambda(Q_1,Q_2) = \sum_i \|\p_i(Q_1)-\p_i(Q_2) \|_{L^1(\Om)}
$$
for any $Q_1$ and $Q_2$ in $\mathcal{A}_0$, cf. \eqref{pii} and \eqref{a00}, respectively. We observe that $\Lambda(Q_1,Q_2)=0$ if and only if for each $i$, $\{x:Q_1(x) \in P_i \}=\{x:Q_2(x) \in P_i \}$ up to a set of measure zero.\par\color{black}
\begin{prop}\label{comapctness}
Let $Q_\e$ be a sequence of maps from $\Omega \times(0,1)$ to $\s$ and assume that the sequence of energies $F_\e(Q_\e)$ is uniformly bounded. Then there exists a subsequence $\{Q_{\e_j}\}$ and $Q \in L^1(\Om \times (0,1);P)$ such that $\Lambda(Q_{\e_j},Q)\to 0$.
\end{prop}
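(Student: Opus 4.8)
The plan is to reduce to the volume functional $\tilde F_\e$ from the proof of \eqref{lsc}, truncate to gain an $L^\infty$ bound, extract $BV$--limits of the scalar functions $\p_i\circ Q_\e$, and finally reassemble a $P$--valued limit $Q$ out of those limits. Pass to a subsequence (not relabeled) with $F_\e(Q_\e)\le C$. Repeating the estimates \eqref{bd1}--\eqref{decay2}, which use only the energy bound --- note that $\|q^\e_{13}\|_{L^2(\Om\times(0,1))}\le\|q^\e_{13}(\cdot,0)\|_{L^2(\Om)}+\|\partial_z q^\e_{13}\|_{L^2(\Om\times(0,1))}\le C\sqrt\e$, and similarly for $q^\e_{23}$ and $q^\e_{33}-\beta$ --- one obtains $\frac1\e\int_{\Om\times(0,1)}f_s(Q_\e)\le\frac1\e\int_{\Om\times\{0,1\}}f_s(Q_\e)+C\e\le C$, and hence
$$
\tilde F_\e(Q_\e):=\int_{\Om\times(0,1)}\Big(\e|\nabla_xQ_\e|^2+\tfrac{|\nabla_zQ_\e|^2}{\e^3}+\tfrac1\e W(Q_\e)\Big)\,dx\,dz\ \le\ F_\e(Q_\e)+C\e\ \le\ C.
$$
Next I would choose $R_0$ so large that $P\subset B_{R_0}$ and $W$ is nondecreasing along every ray through the origin outside $B_{R_0}$ (possible since, for traceless $Q$, $W$ grows quartically along rays with quartic coefficient $c/2$ uniform in the direction), and set $\bar Q_\e:=\min(1,R_0/|Q_\e|)\,Q_\e$, the closest-point projection of $Q_\e$ onto $\overline{B_{R_0}}\subset\s$. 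This projection is $1$--Lipschitz, hence does not increase $|\nabla_xQ_\e|$ or $|\nabla_zQ_\e|$ in the Sobolev chain rule, and it decreases $W$ pointwise, so $\tilde F_\e(\bar Q_\e)\le\tilde F_\e(Q_\e)\le C$ while $\|\bar Q_\e\|_{L^\infty}\le R_0$.

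The first claim is that $\Lambda(Q_\e,\bar Q_\e)\to0$, so that it suffices to handle $\bar Q_\e$. On $S_\e:=\{|Q_\e|>R_0\}$ the quartic growth of $W$ gives $|Q_\e|^4\le C(W(Q_\e)+1)$, whence $\int_{S_\e}|Q_\e|^4\le C\e\,\tilde F_\e(Q_\e)+C|S_\e|$ and $|S_\e|\le R_0^{-4}\int_{S_\e}|Q_\e|^4$; for $R_0$ large this forces $|S_\e|\to0$, then $\int_{S_\e}|Q_\e|^4\to0$, then $\int_{S_\e}|Q_\e|^3\to0$. Since $\bar Q_\e=Q_\e$ off $S_\e$, and since $\p_i(Q)\le C(1+|Q|^3)$ for all $Q$ (estimate $d_{\sqrt W}$ along the segment from a fixed $\alpha_i\in P_i$ to $Q$) while $\p_i\le C(R_0)$ on $\overline{B_{R_0}}$, we get $\|\p_i(Q_\e)-\p_i(\bar Q_\e)\|_{L^1}\le\int_{S_\e}\big(\p_i(Q_\e)+\p_i(\bar Q_\e)\big)\to0$ for each $i$. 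Now for the $BV$ step: $\p_i=d_{\sqrt W}(P_i,\cdot)$ is $1$--Lipschitz for $d_{\sqrt W}$, so by \cite[Prop.\ 4.3 and Eq.\ 4.6]{ambrosio} together with \eqref{measure}, and Young's inequality,
$$
\int_{\Om\times(0,1)}|\nabla(\p_i\circ\bar Q_\e)|\ \le\ \int_{\Om\times(0,1)}\sqrt{W(\bar Q_\e)}\,|\nabla\bar Q_\e|\ \le\ \tfrac12\tilde F_\e(\bar Q_\e)\ \le\ C.
$$
Thus $\{\p_i\circ\bar Q_\e\}$ is bounded in $BV(\Om\times(0,1))\cap L^\infty$; by the compact embedding $BV\hookrightarrow\hookrightarrow L^1$ on the bounded Lipschitz domain $\Om\times(0,1)$ and successive extraction over the finitely many $i$, one gets $\e_j\downarrow0$ and $\psi_i\in L^1(\Om\times(0,1))$ with $\p_i\circ\bar Q_{\e_j}\to\psi_i$ in $L^1$ and a.e.; one more extraction gives also $W(\bar Q_{\e_j})\to0$ a.e., hence $\dist(\bar Q_{\e_j},P)\to0$ a.e.\ (by compactness of $\overline{B_{R_0}}$ and $\{W=0\}=P$).

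It then remains to reconstruct $Q$. For a.e.\ $x$ pick $p_j\in P$ with $|\bar Q_{\e_j}(x)-p_j|\to0$; since $P$ has finitely many components, along a sub-subsequence all $p_j$ lie in one $P_k$, so $\p_k(\bar Q_{\e_j}(x))\le C(R_0)|\bar Q_{\e_j}(x)-p_j|\to0$ along it and therefore $\psi_k(x)=0$. Hence $\{i:\psi_i(x)=0\}\ne\emptyset$ for a.e.\ $x$; set $A_i:=\{x:\psi_i(x)=0,\ \psi_k(x)>0\ \forall\,k<i\}$, fix $\alpha_i\in P_i$, and define $Q:=\sum_{i=1}^n\alpha_i\mathbbm{1}_{A_i}$ ($Q=\alpha_1$ on the remaining null set), so $Q\in L^1(\Om\times(0,1);P)$. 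Because $d_{\sqrt W}$ vanishes along each connected $P_j$, $\p_i(\alpha_j)=d_{\sqrt W}(P_i,\alpha_j)=d_{\sqrt W}(P_i,P_j)$, and by the triangle inequality for $d_{\sqrt W}$, $|\p_i(\bar Q_{\e_j}(x))-\p_i(\alpha_j)|\le d_{\sqrt W}(\bar Q_{\e_j}(x),\alpha_j)\le\p_j(\bar Q_{\e_j}(x))$ for every $x$. On $A_j$ the right side tends to $0$ (as $\psi_j=0$ there) while $\p_i(\bar Q_{\e_j}(x))\to\psi_i(x)$, so $\psi_i=\p_i(\alpha_j)=\p_i(Q)$ a.e.\ on $A_j$; summing over $j$, $\p_i(Q)=\psi_i$ a.e., whence $\Lambda(Q_{\e_j},Q)\le\Lambda(Q_{\e_j},\bar Q_{\e_j})+\sum_i\|\p_i(\bar Q_{\e_j})-\psi_i\|_{L^1}\to0$.

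I expect the main obstacle to be this last step: $\bar Q_{\e_j}$ itself need not converge in any topology, so one cannot simply pass to its limit. What makes the reconstruction go through is, first, that $\dist(\bar Q_{\e_j},P)\to0$ a.e.\ forces one of the $\psi_i(x)$ to vanish at a.e.\ $x$, and second, that the degenerate metric $d_{\sqrt W}$ is constant ($\equiv 0$) on each well $P_j$, so that the numbers $\p_i(\bar Q_{\e_j}(x))$ record only which well is being approached and the limiting $d_{\sqrt W}$--distance to it --- exactly the information that $\Lambda$ retains. By comparison, the truncation bookkeeping and the $BV$ bound are routine given the quartic growth of $W$ and the composition inequality of \cite{ambrosio}.
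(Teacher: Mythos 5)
Your proof is correct and follows essentially the same path as the paper's: truncate to gain an $L^\infty$ bound with $\Lambda$ unchanged in the limit, derive a uniform $BV$ bound on $\p_i\circ Q_\e$ from the modified energy, extract $L^1$ limits $\psi_i$, observe that some $\psi_i$ must vanish a.e., and reconstruct $Q$ by placing a fixed $\alpha_i\in P_i$ on the set where $\psi_i=0$. The only cosmetic differences are that you identify the vanishing $\psi_i$ via pointwise convergence $W(\bar Q_{\e_j})\to0$ a.e.\ rather than the paper's Egoroff--contradiction argument, and you spell out the truncation and the identity $\p_i(Q)=\psi_i$ in more detail than the paper does; both are reformulations of the same reasoning.
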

\begin{remark}
We thank the anonymous referee for bringing to our attention a similar result stated without proof in \cite[Proposition 4.1]{ambrosio}. For the convenience of the reader we include the elementary argument.
\end{remark}
\begin{proof}
By similar arguments as in the proof of lower--semicontinuity in Section \ref{gproof}, we can truncate the $Q_\e$ to obtain $\tilde{Q}_\e$ such that $\Lambda(Q_\e,\tilde{Q}_\e)\to 0$ and $\tilde{Q}_\e$ are uniformly bounded in $L^\infty$. Hence if we obtain $Q$ such that $\Lambda(\tilde{Q}_{\e_m},Q)\to 0$, then $\Lambda(Q_{\e_m},Q)\to 0$ as well. Suppressing the tildes, from the $L^\infty$ bound we see that
$$
\| \p_i \circ Q_\e\|_{L^1(\Om)} \leq C < \infty.
$$
Next, using the calculations preceding \eqref{fe}, which replaced the surface integrals of $f_s$ by volume integrals, we have
\begin{align}\notag
 2\int_{\Om\times (0,1)} \sqrt{W(Q_\e)}|\nabla Q_\e|\,dx\,dz &\leq \int_{\Om\times (0,1)} \left(  \e|\nabla Q_\e|^2+\f{W(Q_\e)}{\e} \right) \,dx\,dz\\ \notag &\leq F_\e (Q_\e)+O(\e) \\ \notag &\leq C < \infty.
\end{align}
It is straightforward then to see that
\begin{equation}\notag
2\int_{\Om \times(0,1)}|\nabla(\p_i\circ Q_\e)|\,dx \,dz  \leq  2\int_{\Om\times (0,1)} \sqrt{W(Q_\e)}|\nabla Q_\e|\,dx\,dz\leq C < \infty
.\end{equation}
Thus $\{\p_i\circ Q_\e\}$ are uniformly bounded in $BV$.\par
It remains to construct a limiting element $Q$. For each $i$, up to a subsequence,
$$
\p_i(Q_{\e_j}) \rightarrow \omega_i \textup{ as }j \to \infty
$$
in $L^1(\Om\times (0,1))$ for some function $\omega_i$. We claim that the sets $E_i:=\{(x,z)\in \Om \times(0,1):\omega_i(x,z)=0\}$ partition $\Omega \times (0,1)$ up to sets of measure zero. To see this, first suppose by way of contradiction that there exists a set $A \subset \Omega \times (0,1)$ of positive measure such that none of the $\omega_i$'s are zero on $A$. By restricting to a further subsequence, we can assume for each $i$ that $\p_i(Q_{\e_j})$ converges almost everywhere and hence, by Egoroff's Theorem, almost uniformly on $A$. Since $\omega_i>0$ on $A$ for each $i$, we can obtain a set $B \subset A$ of positive measure and an $\eta>0$ such that on $B$, $$\p_i(Q_{\e_j})>\eta>0$$ for each $i$ and for $j$ sufficiently large. But if the distance $\p_i$ to $P_i$ is greater than $\eta$ on $B$ for each $i$, it follows that for $j$ sufficiently large, $W(Q_{\e_j})>\tilde{\eta}$  on $B$ for some $\tilde{\eta}>0$. We also recall from \eqref{fe} that for a sequence such as $\{Q_\e\}$ with bounded energies we can replace $F_\e(Q_\e)$ by $\tilde{F}_\e(Q_\e)$ up to an error of order $O(\e)$. Combining these observations we deduce that
$$
\f{|B|\tilde{\eta}}{\e_j} \leq \int_B \f{W(Q_{\e_j})}{\e_j}\,dx\,dz \leq \tilde{F}_{\e_j}(Q_{\e_j}) \leq C < \infty
,$$
which is a contradiction for $j$ sufficiently large. We conclude that the union of the $E_i$'s contains $\Om$ up to a set of measure zero. To see that the $E_i$ are disjoint, note that if $\p_i(Q_{\e_j})$ is very close to zero, then for $k \neq i$, $\p_k(Q_{\e_j})$ must be away from zero. Therefore, the sets $\{E_i\}$ have empty intersection, so they partition $\Om$ up to a set of measure zero.\par
We see that each $\omega_i$ has finite range, since it can only take the values $d_{\sqrt{W}}(P_i,P_k)$. Using the partition $\{E_i\}$, we can take any $Q\in L^1(\Om \times (0,1);\s)$ such that $\{ (x,z)\in \Om \times (0,1) : Q(x,z) \in P_i \}=E_i$ as our limiting element. As a specific example of such a $Q$, define $Q \equiv \alpha_i \in P_i$ on each $E_i$, where $\alpha_i$ is any constant in $P_i$. 
\end{proof}
We now prove the existence of local minimizer for $F_\e$ if there exists a $\Lambda$--isolated local minimizer of $F_0$.
\begin{theorem}\label{mini}
Let $Q_0$ be an isolated $\Lambda$--local minimizer in the sense that there exists $\delta > 0$ such that
$$
F_0(Q_0) < F_0(Q)
$$
if $0<\Lambda(Q,Q_0) \leq \delta$. Then there exists $\e_0>0$ and a family $\{Q_\e \}_{\e<\e_0}$ such that
\begin{equation}\label{min1}
Q_\e \text{ is a }\Lambda \text{--local minimizer of } F_\e
\end{equation}
and
\begin{equation}\label{min2}
\Lambda(Q_0,Q_\e) \rightarrow 0
.\end{equation}
\end{theorem}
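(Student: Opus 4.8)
The plan is to carry out the Kohn--Sternberg constrained--minimization scheme \cite{ks89}, but with the $L^1$ distance replaced by $\Lambda$ and $L^1$--compactness replaced by \cref{comapctness}. Fix the $\delta>0$ from the hypothesis. For each sufficiently small $\e>0$, I would minimize $F_\e$ over the closed set
\begin{equation}\notag
K_\delta:=\{Q\in\mathcal A_g:\ \Lambda(Q,Q_0)\le\delta\}.
\end{equation}
This set is nonempty for small $\e$: the recovery--sequence part of \cref{t1} produces $\{\tilde Q_\e\}$ with $\tilde Q_\e\to Q_0$ in $L^1$ (hence $\Lambda(\tilde Q_\e,Q_0)\to0$ since the $\p_i$ are locally Lipschitz and the sequence may be taken bounded) and $F_\e(\tilde Q_\e)\to F_0(Q_0)<\infty$, so $\tilde Q_\e\in K_\delta$ once $\e$ is small. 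Existence of a minimizer $Q_\e$ over $K_\delta$ follows from the direct method at fixed $\e$: a minimizing sequence has $F_\e$--energy eventually bounded by $F_\e(\tilde Q_\e)+1$, and for fixed $\e$ the coercive terms $\e|\nabla_xQ|^2$, $\e^{-3}|\nabla_zQ|^2$, $\e^{-1}f_{LdG}(Q)$, together with the fixed Dirichlet datum and the quartic growth of $f_{LdG}$, force boundedness in $H^1(\Om\times(0,1);\s)$; along a weakly convergent subsequence the gradient terms are lower semicontinuous by convexity, the bulk term by Fatou, and the surface term is continuous because the trace is compact from $H^1$ into $L^2(\pa(\Om\times(0,1)))$ and $f_s$ has quadratic growth. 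Finally $K_\delta$ is closed, since $\Lambda(\cdot,Q_0)$ is continuous along bounded $L^1$--convergent sequences.

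The heart of the argument is to show that for all sufficiently small $\e$ the constraint is inactive, i.e. $\Lambda(Q_\e,Q_0)<\delta$. Granting this, $Q_\e$ is automatically a $\Lambda$--local minimizer of $F_\e$, giving \eqref{min1}: if $\Lambda(Q,Q_\e)\le\delta-\Lambda(Q_\e,Q_0)$ then $\Lambda(Q,Q_0)\le\delta$ by the triangle inequality, hence $F_\e(Q_\e)\le F_\e(Q)$. To prove the constraint is inactive I would argue by contradiction: suppose $\Lambda(Q_{\e_j},Q_0)=\delta$ along some $\e_j\to0$. Comparing with the recovery sequence gives $F_{\e_j}(Q_{\e_j})\le F_{\e_j}(\tilde Q_{\e_j})$, whence $\limsup_j F_{\e_j}(Q_{\e_j})\le F_0(Q_0)$; in particular $\{F_{\e_j}(Q_{\e_j})\}$ is uniformly bounded. \cref{comapctness} then furnishes, along a subsequence, a map $Q_\star\in L^1(\Om\times(0,1);P)$ with $\Lambda(Q_{\e_j},Q_\star)\to0$. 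Since $Q_\star$ is $P$--valued, this $\Lambda$--convergence is exactly convergence of $\pi\circ Q_{\e_j}$ to $\pi\circ Q_\star$ in $L^1(E)$ (in the quotient $E$ each $P_i$ collapses to a point and $d_{\sqrt{W}}(\cdot,P_i)=\p_i$, so $\int d_{\sqrt{W}}(\pi Q_{\e_j},\pi Q_\star)=\Lambda(Q_{\e_j},Q_\star)$). Hence the lower--semicontinuity argument of Section \ref{gproof} applies essentially verbatim---the step \eqref{abslsc} uses only $E$--convergence, $W(Q_\star)=0$ a.e. is automatic, and the boundary cost is recovered through the auxiliary domain $\tilde\Om\supset\Om$ as in Section \ref{gproof}---and yields $F_0(Q_\star)\le\liminf_j F_{\e_j}(Q_{\e_j})\le F_0(Q_0)$; in particular $F_0(Q_\star)<\infty$, so $Q_\star\in\mathcal A_0$. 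But $\Lambda(Q_\star,Q_0)=\lim_j\Lambda(Q_{\e_j},Q_0)=\delta>0$, contradicting that $Q_0$ is an isolated $\Lambda$--local minimizer. Therefore $\Lambda(Q_\e,Q_0)<\delta$ for $\e$ small, which is \eqref{min1}.

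Finally, \eqref{min2} follows from the same extraction: given any sequence $\e_j\to0$, pass to a subsequence along which $\Lambda(Q_{\e_j},Q_\star)\to0$ with $Q_\star\in\mathcal A_0$, $F_0(Q_\star)\le F_0(Q_0)$, and $\Lambda(Q_\star,Q_0)\le\delta$; the isolated--minimizer hypothesis forces $\Lambda(Q_\star,Q_0)=0$, so $\Lambda(Q_{\e_j},Q_0)\to0$, and since every subsequence has a further subsequence with this property, $\Lambda(Q_\e,Q_0)\to0$. The step I expect to be the main obstacle is the contradiction step: one must verify carefully that the $\Lambda$--compactness of \cref{comapctness} genuinely feeds into the $L^1$--phrased lower bound of \cref{t1}---that is, that $\Lambda$--convergence to a $P$--valued limit coincides with $E$--convergence of $\pi\circ Q_{\e_j}$, and that the transition cost along $\pa\Om$ is correctly captured in the limit by the extended--domain device of Section \ref{gproof}. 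The direct--method existence at fixed $\e$, in particular the weak--$H^1$ continuity of the surface term and the closedness of $K_\delta$, is routine but should still be spelled out.
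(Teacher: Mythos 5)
Your proposal reproduces the paper's argument essentially verbatim: constrained minimization of $F_\e$ over the $\Lambda$--ball via the direct method, a contradiction step showing the constraint is inactive for small $\e$ using \cref{comapctness} together with the observation that $\Lambda$--convergence to a $P$--valued limit implies $L^1$--convergence of $d_{\sqrt{W}}$ (which feeds into the lower--semicontinuity proof of Section~\ref{gproof}), and a subsequence argument for \eqref{min2}. The one minor imprecision is the claim that $\int_\Om d_{\sqrt{W}}(\pi\circ Q_{\e_j},\pi\circ Q_\star)\,dx$ \emph{equals} $\Lambda(Q_{\e_j},Q_\star)$: on the set where $Q_\star\in P_i$ one has $d_{\sqrt{W}}(Q_{\e_j},Q_\star)=\p_i(Q_{\e_j})=|\p_i(Q_{\e_j})-\p_i(Q_\star)|$, so the integral is only bounded \emph{above} by $\Lambda(Q_{\e_j},Q_\star)$ (the latter also sums the $\p_j$--terms for $j\ne i$), but this one--sided inequality is exactly what the argument needs.
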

\begin{remark}\label{mike}
We note that $Q_\e$ are in fact $H^1$--local minimizers of $F_\e$. This is because the distance $\Lambda$ is weaker than the $H^1$ metric.
\end{remark}
\begin{proof}
The direct method in the calculus of variations yields for each $\e>0$ a $Q_\e$ which minimizes $F_\e$ on the ball $\{Q:\Lambda(Q,Q_0)\leq \delta\}$. By the existence of recovery sequences for $F_\e$ established in \cref{t1}, we know that for $\e$ sufficiently small, the recovery sequence $\{Q_\e^0\}$ for $Q_0$ is contained in $B$. We then have
\begin{equation}\label{bleh}
\liminf_{\e \rightarrow 0} F_\e(Q_\e) \leq \liminf_{\e \rightarrow 0} F_\e(Q_\e^0) = F_0(Q_0).
\end{equation}\par
We will now show that for sufficiently small $\e$, $\Lambda(Q_\e,Q_0)<\delta$, thus establishing that the $Q_\e$ are local minimizers of $F_\e$. Suppose this does not hold, so that there exists a subsequence $\{ \e_j\}$ such that $\Lambda(Q_{\e_j},Q_0)=\delta$. By \cref{comapctness}, we obtain $Q\in L^1(\Om \times (0,1);P)$ such that up to a further subsequence, still denoted by $Q_{\e_j}$,
$$
\Lambda(Q_{\e_j},Q)\to 0.
$$
We will show that $F_0(Q)\leq F_0(Q_0)$ and $Q\in \{Q:\Lambda(Q,Q_0)\leq \delta\}$. This will contradict that $Q_0$ is a $\Lambda$--isolated local minimizer of $F_0$.\par
We observe $\|d_{\sqrt{W}}(Q_{\e_j},Q) \|_{L^1}\leq \| \p_i \circ Q_{\e_j}\|_{L^1}+\| \p_i \circ Q\|_{L^1}$, so that $\| d_{\sqrt{W}}(Q_{\e_j},Q)\|\to 0$. Next, examining the proof of the lower semicontinuity condition \eqref{lsc}, we that see the $L^1$ convergence of $ d_{\sqrt{W}}(Q_{\e_j},Q)$ to $0$ along with the fact that $Q$ takes values in $P$ are in fact sufficient conditions to conclude that
\begin{equation}\notag
F_0(Q) \leq \liminf_{j \to \infty} F_{\e_j}(Q_{\e_j}).
\end{equation} But we also have from \eqref{bleh} that
\begin{equation}\notag
\liminf_{j \rightarrow \infty}  F_{\e_j}(Q_{\e_j}) \leq F_0(Q_0).
\end{equation}
 It follows that $F_0(Q)\leq F_0(Q_0)$. Combining this with our assumption that $\Lambda(Q_0,Q)=\delta$ we obtain a contradiction to the fact that $Q_0$ is an isolated $\Lambda$--local minimizer of $F_0$. We have thus shown that $\Lambda(Q_0,Q_\e)<\delta$ for sufficiently small $\e$. The proof that $\Lambda(Q_\e,Q_0)\rightarrow 0$ proceeds using similar reasoning.
\end{proof}\color{black}
We point out that the arguments in the preceding theorem apply in more general scenarios. Below, we formulate one such generalization.\par
Let $\mathcal{N}\subset \R^l$ be an open, bounded domain with smooth boundary. Also, we fix a smooth, disconnected, and bounded set $\mathcal{M}\in\R^k$ with components $\mathcal{M}_1, \cdots, \mathcal{M}_n$. Let $W : \R^k \to [0,\infty)$ satisfy $W^{-1}({0})=\mathcal{M}$ and $W(v)\to \infty$ as $|v|\to \infty$. Consider the functionals
\begin{equation}\notag
H_\e(u) = \int_{\mathcal{N}}\left(\e |\nabla u|^2 + \f{1}{\e}W(u)\right)\,dx,
\end{equation}
for maps $u$ satisfying either a prescribed volume constraint or a Dirichlet boundary condition $g$. Let $\p_i$ be the distance to $\mathcal{M}_i$ under the degenerate Riemannian metric with the conformal factor $\sqrt{W}$. If we define the distance
\begin{equation}\label{L}
\Lambda_{\mathcal{M},\mathcal{N}}(u_1,u_2)=\sum_{i=1}^n \|\p_i(u_1)-\p_i(u_2) \|_{L^1(\mathcal{N})},
\end{equation}
then any sequence $\{u_\e\}$ with bounded energy is $\Lambda_{\mathcal{M},\mathcal{N}}$--compact. Suppose that in the $L^1(\mathcal{N};\mathcal{M})$ topology, the $H_\e$ $\Gamma$--converge to a functional $H_0$ as $\e \to 0$. Then we have the following theorem.
\begin{theorem}\label{general}
Let $u_0$ be an isolated $\Lambda_{\mathcal{M},\mathcal{N}}$--local minimizer in the sense that there exists $\delta > 0$ such that
$$
H_0(u_0) < H_0(u)
$$
if $0<\Lambda_{\mathcal{M},\mathcal{N}}(u,u_0) \leq \delta$. Then there exists $\e_0>0$ and a family $\{u_\e \}_{\e<\e_0}$ such that
\begin{equation}\label{genmin1}
u_\e \text{ is a }\Lambda_{\mathcal{M},\mathcal{N}} \text{--local minimizer of } H_\e
\end{equation}
and
\begin{equation}\label{genmin2}
\Lambda_{\mathcal{M},\mathcal{N}}(u_0,u_\e) \rightarrow 0
.\end{equation}
\end{theorem}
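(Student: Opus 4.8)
The plan is to reproduce the proof of \cref{mini} essentially verbatim, since that argument used only three inputs, each available under the hypotheses of \cref{general}: coercivity of the $H_\e$ (which follows from $W(v)\to\infty$ as $|v|\to\infty$ together with the $\e|\nabla u|^2$ term), the $\Lambda_{\mathcal{M},\mathcal{N}}$--compactness of bounded--energy sequences, and the $\Gamma$--convergence $H_\e\to H_0$. First I would apply the direct method in the calculus of variations to produce, for each $\e>0$, a minimizer $u_\e$ of $H_\e$ over the closed ball $B:=\{u:\Lambda_{\mathcal{M},\mathcal{N}}(u,u_0)\le\delta\}$; here one uses that, for fixed $\e$, a minimizing sequence is bounded in $H^1(\mathcal{N})$ hence $\Lambda_{\mathcal{M},\mathcal{N}}$--precompact, that $H_\e$ is lower semicontinuous along such sequences, and that $B$ is closed under $\Lambda_{\mathcal{M},\mathcal{N}}$--convergence by the triangle inequality. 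Since $\Gamma$--convergence furnishes a recovery sequence $\{u_\e^0\}$ with $\Lambda_{\mathcal{M},\mathcal{N}}(u_\e^0,u_0)\to 0$ and $H_\e(u_\e^0)\to H_0(u_0)$, we have $u_\e^0\in B$ for $\e$ small, so $H_\e(u_\e)\le H_\e(u_\e^0)$ and therefore $\liminf_{\e\to0}H_\e(u_\e)\le\lim_{\e\to0}H_\e(u_\e^0)=H_0(u_0)$, the analogue of \eqref{bleh}.

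Next I would carry out the contradiction step that yields \eqref{genmin1}. Suppose the conclusion fails, so that along a subsequence $\Lambda_{\mathcal{M},\mathcal{N}}(u_{\e_j},u_0)=\delta$. The energies $H_{\e_j}(u_{\e_j})$ are uniformly bounded by the previous display, so by $\Lambda_{\mathcal{M},\mathcal{N}}$--compactness there is a further subsequence (not relabeled) and $u\in L^1(\mathcal{N};\mathcal{M})$ with $\Lambda_{\mathcal{M},\mathcal{N}}(u_{\e_j},u)\to0$. Two facts then finish the argument: (a) $H_0(u)\le\liminf_j H_{\e_j}(u_{\e_j})$, and (b) $\Lambda_{\mathcal{M},\mathcal{N}}(u,u_0)=\delta$. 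Fact (b) is immediate from the triangle inequality for $\Lambda_{\mathcal{M},\mathcal{N}}$ together with $\Lambda_{\mathcal{M},\mathcal{N}}(u_{\e_j},u)\to0$. Granting (a) and combining it with the upper bound above gives $H_0(u)\le H_0(u_0)$ while $0<\Lambda_{\mathcal{M},\mathcal{N}}(u,u_0)\le\delta$, contradicting that $u_0$ is an isolated $\Lambda_{\mathcal{M},\mathcal{N}}$--local minimizer of $H_0$. Hence $\Lambda_{\mathcal{M},\mathcal{N}}(u_\e,u_0)<\delta$ for $\e$ small, which is \eqref{genmin1}. For \eqref{genmin2} one argues in the same spirit: every subsequence of $\{u_\e\}$ has a further subsequence that $\Lambda_{\mathcal{M},\mathcal{N}}$--converges to some limit $u$ satisfying $H_0(u)\le H_0(u_0)$ and $\Lambda_{\mathcal{M},\mathcal{N}}(u,u_0)\le\delta$, which by the isolatedness hypothesis forces $u=u_0$; so the whole family converges.

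The step I expect to be the real obstacle is (a). The $\Gamma$--liminf inequality is phrased in terms of $L^1$--convergence of the maps, whereas $\Lambda_{\mathcal{M},\mathcal{N}}(u_{\e_j},u)\to0$ only delivers $L^1$--convergence of each $\p_i\circ u_{\e_j}$, and $u_{\e_j}$ may oscillate inside the higher--dimensional well $\mathcal{M}_i$ in a way invisible to the $\p_i$. As in the proof of \cref{mini}, the gap is bridged by noting that, because $u$ takes values in $\mathcal{M}$, on $\{u\in\mathcal{M}_i\}$ the (quotient) distance $d_{\sqrt{W}}(u_{\e_j},u)$ equals $\p_i\circ u_{\e_j}$, which converges to $0$ in $L^1$ of that set; hence $d_{\sqrt{W}}(u_{\e_j},u)\to0$ in $L^1(\mathcal{N})$. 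One then invokes the fact that the liminf inequality, when established through the metric--valued $BV$ framework of \cite{ambrosio} — exactly as for \eqref{lsc} in \cref{gproof} — remains valid with $L^1$--convergence of the maps replaced by $d_{\sqrt{W}}(u_\e,u)\to0$ for an $\mathcal{M}$--valued limit $u$. We therefore treat this robust form of the liminf inequality as built into the standing hypothesis that $H_\e$ $\Gamma$--converges to $H_0$; it holds in all the examples we have in mind, including the liquid crystal functionals $F_\e$. Finally, the volume--constrained variant of \cref{general} is handled identically, the only additional point being that the prescribed constraint on the volumes of the phases passes to the limit under the convergence used in the compactness step, which requires no new ideas.
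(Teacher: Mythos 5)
Your proposal reproduces the paper's argument for \cref{mini} essentially verbatim — the direct method over the ball $B$, the recovery-sequence upper bound, the $\Lambda_{\mathcal{M},\mathcal{N}}$--compactness plus contradiction step, and the observation that $\Lambda_{\mathcal{M},\mathcal{N}}$--convergence controls $\|d_{\sqrt{W}}(u_{\e_j},u)\|_{L^1}$ because the limit is $\mathcal{M}$--valued — and this is exactly the route the paper intends for \cref{general}, which it states without separate proof as a generalization of \cref{mini}. You are also right to flag as a genuine hypothesis the "robust" form of the liminf inequality (valid under $d_{\sqrt{W}}$--convergence rather than full $L^1$--convergence), which the paper itself only verifies for the concrete $F_\e$ in the proof of \cref{mini}; this reading is consistent with the paper's framing.
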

\begin{remark}
Using the techniques from the proof of \cref{t1}, the $\Gamma$--convergence of $H_\e$ could be established in the case of a volume constraint. In the case of a Dirichlet boundary condition, one would need to be able to construct a smoothly varying family of paths connecting the boundary data $g$ to any element in the domain of a map in the admissible set for $H_0$. Provided this is possible, the $\Gamma$--convergence could be proved using our techniques.
\end{remark}
\section{Isolated Local Minimizers for $F_0$}\label{iso}
In this section, we will prove the existence of an isolated local minimizer to a partitioning problem on certain two--dimensional domains. For a domain $\Om$, we will refer to a partition of $\Om$ as an ordered pair $(C,D)$ of subsets of $\Om$ with finite perimeter and disjoint measure theoretic interiors such that 
$$
|\Om - (C \cup D)|=0
.$$
The notation $| \cdot |$ refers to the Lebesgue measure. We introduce the notation
$$
\theta(C,x)=\lim_{r\rightarrow 0} \f{|C\cap B(x,r)|}{|B(x,r)|}
$$
to refer to the density of a set $C$ at a point $x$. If the above limit does not exist, we will use $\underline{\theta}(C,x)$ and $\overline{\theta}(C,x)$ to refer to the corresponding lim inf and lim sup, respectively. We define the measure theoretic interior of a set $C$ as the set of points $x$ where $\theta(C,x)=1$ and denote it by $C^i$. The measure theoretic boundary of a finite perimeter set $C$ is defined as the set
$$
\pam C = \{x:0<\overline{\theta}(C,x)\}\cap \{x:\underline{\theta}(C,x)<1\}
.$$
Note that finite perimeter sets are only defined up to sets of Lebesgue measure zero. Throughout, in order to avoid ambiguity, we will always use the measure--theoretic closure of a set $C$, the set of points where $\overline{\theta}(C,x)>0$, as its representative, and we will denote the measure--theoretic boundary of $C$ by $\pa C$ rather than $\pam C$.\par
The functional $F_0$ is defined on partitions $(C,D)$ of $\Om$ by the formula
\begin{equation}\label{f0}
F_0(C,D) = c_1\h(\pa C \cap \pa \Omega)+c_2\h(\pa D \cap \pa \Omega)+c_3\h(\pa C \cap \pa D)
.\end{equation}
We reuse the notation $F_0$ since $F_0$ as in \eqref{f0} is a special case of the $F_0$ defined in \eqref{f0intro} when there are only two connected components of the zero set $P$ of $W$.\par
Let us assume the following inequalities for the constants $c_i$:
$$0<c_1 < c_2$$
and
$$c_2<c_1+c_3.$$
The first inequality is natural, since in the case that $c_1=c_2$, the cost on $\pa \Om$ is the same for every competitor and hence minimizing $F_0$ reduces to minimizing the interfacial length of any partition. In this case, the problem is the same as that studied in \cite{ks89}. Phrased in the language of an Allen--Cahn type problems with a potential $W$, the second inequality is an assumption that the triangle inequality is strict for the degenerate Riemannian metric with conformal factor $\sqrt{W}$.\par
We now describe the type of domains for which we will be able to construct a local minimizer to $F_0$; see Fig. \ref{Fig. 2}. Let $\Omega \subset \R^2$ be a simply connected $C^2$ domain, and denote the outward unit normal vector to $\Omega$ by $\nu_\Om$. Assume that $\Om$ contains a line segment $\overline{PQ}$ such that $P,Q \in \pa \Om$ and there exists a unit vector $v$ perpendicular to $P-Q$ satisfying the following conditions:
\begin{equation}\label{contact}
v \cdot \nu_\Om = \f{c_1-c_2}{c_3}<0\textup{ at } P \textup{ and }Q,
\end{equation}
and in neighborhoods $N_1$, $N_2$ of $P$ and $Q$, respectively,
\begin{equation}\label{convex}
\pa \Om\cap N_i \textup{ is the graph of a strictly convex function over an interval of length }l.
\end{equation}
The first condition is the contact angle condition which arises as a necessary condition for criticality. It represents a local balance between interfacial and boundary energy.\par
Define $\Gamma_1$ to be $\pa \Om \cap N_1$ and similarly $\Gamma_2=\pa \Om \cap N_2$. Note that under our assumptions, $v \cdot \nu_\Om$ is a monotone function of the arc--length variable along $\Gamma_1$ and $\Gamma_2$. We can now define the candidate $(A,B)$ for the isolated local minimizer of $F_0$. Let $\pa A \cap \pa B = \overline{PQ}$ and choose $A$ such that for each $i$, $v \cdot \nu_\Om$ is strictly larger on $\Gamma_i \cap \pa A$ than on $\Gamma_i \cap \pa B$; see Fig. \ref{Fig. 2}. The inequality $c_1<c_2$ forces us to specify $A$ and $B$ in this manner, since $F_0$ is not symmetric regarding the boundary costs. Consider the following variational problem:
\begin{equation}\label{var}
\textrm{minimize }F_0(C,D)\textrm{ over partitions }(C,D) \textrm{ satisfying } |A \triangle C|+|B\triangle D| \leq\delta.
\end{equation}
We will prove the following theorem:
\begin{theorem}\label{main} There exists $\delta=\delta(\Om,\{c_i\})$ such that $(A,B)$ is  the unique solution to the problem \eqref{var}. 
\end{theorem}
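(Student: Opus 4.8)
The plan is to exhibit a calibration for the candidate partition $(A,B)$ and to use it both to prove minimality and to extract uniqueness from the equality cases. Recall that $\pa A\cap\pa B=\overline{PQ}$ is a straight segment perpendicular to the fixed unit vector $v$, and that the contact angle condition \eqref{contact} says precisely that $v\cdot\nu_\Om=(c_1-c_2)/c_3$ at $P$ and $Q$. The natural calibration is the constant vector field $X\equiv c_3\,v$ on a neighborhood of $\overline{PQ}$ inside $\Om$. For any competitor partition $(C,D)$ with $|A\triangle C|+|B\triangle D|\le\delta$, write $\mathbbm 1_C-\mathbbm 1_A=:u\in BV$, supported (up to the $\delta$-constraint) near $\overline{PQ}$, and integrate: $\int_\Om X\cdot D u=\int_{\partial^*C\cap\Om}c_3\,v\cdot\nu_C\,d\h-\int_{\partial^*A\cap\Om}c_3\,v\cdot\nu_A\,d\h$, where the boundary terms on $\pa\Om$ are controlled by the trace of $u$. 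Since $X$ is divergence-free the left side vanishes, giving an identity relating $\h(\pa C\cap\pa D)$ and the boundary integrals of $v\cdot\nu_\Om$ to the corresponding quantities for $(A,B)$.

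The key estimates are then: first, $c_3\h(\pa C\cap\pa D)\ge\int_{\partial^*C\cap\Om}X\cdot\nu_C\,d\h$ since $|X|=c_3$, with equality iff $\nu_C=v$ $\h$-a.e. on the interface; and second, on the boundary portion one uses the monotonicity of $v\cdot\nu_\Om$ along $\Gamma_1,\Gamma_2$ guaranteed by the strict convexity hypothesis \eqref{convex}. Concretely, if the interface of $(C,D)$ meets $\pa\Om$ at points other than $P,Q$, then because $v\cdot\nu_\Om$ is strictly monotone near $P$ and $Q$ and equals $(c_1-c_2)/c_3$ exactly at $P,Q$, any displacement of the contact points strictly increases the combined boundary cost $c_1\h(\pa C\cap\pa\Om)+c_2\h(\pa D\cap\pa\Om)$ relative to the first-order gain (or loss) in interfacial length captured by the calibration. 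Assembling: $F_0(C,D)-F_0(A,B)\ge\int(c_3\,v\cdot\nu_C-X\cdot\nu_C)\,d\h+(\text{boundary remainder})\ge 0$, and one tracks the equality cases. The $\delta$ is chosen small enough (depending on $\Om$ and the $c_i$, through the length $l$ of the convex arcs and a lower bound for the monotonicity of $v\cdot\nu_\Om$) that the calibration field $c_3 v$ remains defined on a tubular neighborhood containing all competitors, and that competitors cannot "escape" the necks $N_1,N_2$.

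For uniqueness, suppose $F_0(C,D)=F_0(A,B)$. Equality in $|X|=c_3$ forces $\nu_C=v$ $\h$-a.e. on $\pa C\cap\pa D\cap\Om$, so the interface is a union of segments perpendicular to $v$; the constraint $|A\triangle C|+|B\triangle D|\le\delta$ together with $\Om$ simply connected and the geometry near the neck forces it to be a single such segment. Equality in the boundary remainder, via strict monotonicity of $v\cdot\nu_\Om$, pins the contact points to satisfy \eqref{contact}, i.e. to be exactly $P$ and $Q$ — here the \emph{strict} convexity is essential, as it makes $v\cdot\nu_\Om$ strictly monotone so that $P,Q$ are the unique solutions of $v\cdot\nu_\Om=(c_1-c_2)/c_3$ in $\Gamma_i$. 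Hence $(C,D)=(A,B)$.

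The main obstacle I expect is the boundary bookkeeping: carefully converting the $BV$ trace of $\mathbbm 1_C-\mathbbm 1_A$ on $\pa\Om$ into the signed combination $c_1\h(\pa C\cap\pa\Om)+c_2\h(\pa D\cap\pa\Om)-c_1\h(\pa A\cap\pa\Om)-c_2\h(\pa B\cap\pa\Om)$ and showing this dominates the divergence-theorem boundary term $\int_{\pa\Om}(c_3 v\cdot\nu_\Om)\,u\,d\h$. This is where the precise value $(c_1-c_2)/c_3$ in \eqref{contact} and the strict inequality $c_2<c_1+c_3$ (ensuring the interface genuinely prefers to terminate on $\pa\Om$ rather than, say, split into two interfaces) must be used, and where one must rule out competitors whose interface develops extra components or whose contact set is not two points. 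The interior part of the argument is the standard calibration computation and is routine once $X=c_3 v$ is identified as the right field.
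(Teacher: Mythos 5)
Your calibration computation is essentially the one the paper uses (constant field $c_3 v$, Cauchy--Schwarz with equality iff $\nu_C=v$, Gauss--Green to convert interior interface to boundary terms, strict monotonicity of $v\cdot\nu_\Om$ on the convex arcs to pin the contact points to $P$, $Q$, and \cref{plane} to upgrade ``$\nu_C=v$ a.e.''\ to ``the interface is a segment''). However, there is a genuine gap in your proposal before the calibration can be applied: you assert that choosing $\delta$ small forces ``competitors cannot escape the necks $N_1,N_2$,'' but this is not a consequence of the $L^1$ constraint $|A\triangle C|+|B\triangle D|\le\delta$ alone. A set of finite perimeter can be $L^1$-close to $A$ while its reduced boundary wanders arbitrarily far from $\overline{PQ}$ and from $\Gamma_1\cup\Gamma_2$ (think of thin fingers or small islands). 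The calibration inequality cannot rule these out, because outside $\Gamma_1\cup\Gamma_2$ you have no information on the sign of $c_1-c_3 v\cdot\nu_\Om-c_2$; the boundary term could go the wrong way.

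What is missing is precisely the corralling step that the paper isolates as \cref{corral2}: using the \emph{minimality} of a solution $(A',B')$ of \eqref{var} (not just its $L^1$ proximity to $(A,B)$), one shows via a modification/cut-and-replace argument in the style of Ziemer that $\pa A'\cap\pa B'\subset\{x:\dist(x,\overline{PQ})<10\sqrt\delta\}\cup(\sigma^{-1}(\Gamma_1\cup\Gamma_2)\cap\Om_{10\sqrt\delta})$. This is where the hypotheses $c_1<c_2$ and $c_2<c_1+c_3$ are actually used in a nontrivial way (via \cref{dist}, comparing interior perimeter of the part of $A'$ near $\pa\Om$ with its shadow on $\pa\Om$), and it is a substantial part of the proof, not merely bookkeeping. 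Once this containment (and hence condition \eqref{cant}) is in hand, the calibration argument you sketch closes the proof exactly as you describe. So your route is the right one for the second half, but you have silently assumed the conclusion of the first half.
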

\afterpage{
\begin{figure}
\includegraphics{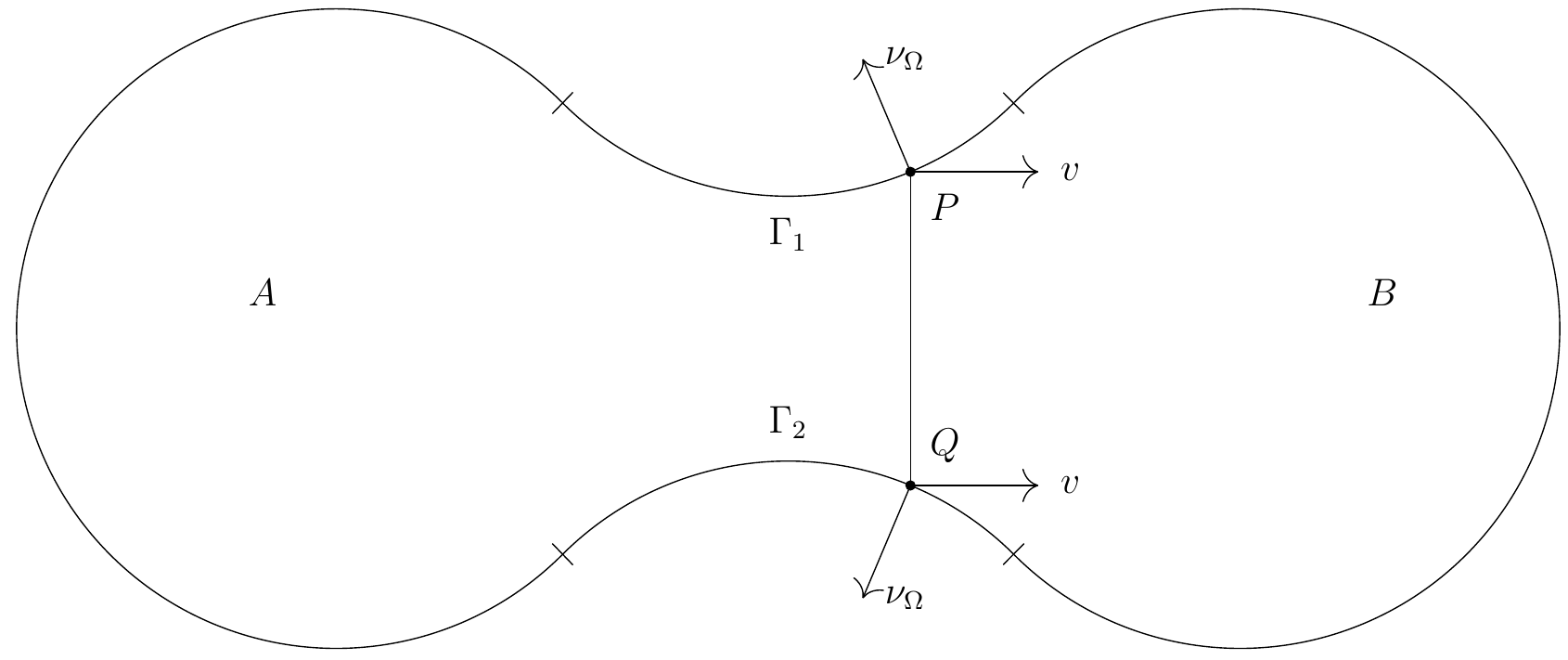}
\caption{A dumbbell domain with an isolated local minimizer $(A,B)$ of $F_0$.}
 \label{Fig. 2}
\end{figure}
}
Note that the direct method and compactness in BV yield that a minimizer exists for the variational problem \eqref{var}. To prove \cref{main}, we will first show in \cref{corral2} that the boundary $\pa A' \cap \pa B' \cap \Om$ of any solution $(A',B')$ to \eqref{var} must be ``uniformly close" to $\overline{PQ}$. Then, using a calibration argument, we will show that $(A,B)$ is an isolated local minimizer.\par
We make a few remarks about $\Om$ which will be used throughout the proof of \cref{main}. Denote the set $\{x \in \Om : d(x,\pa \Om)<\eta\}$ by $\Om_ \eta$. Let $\gamma:[0,L)\rightarrow \pa \Om$ parametrize $\pa \Om$ by arclength. The assumption that $\pa \Om$ is $C^2$ implies that for some small $\eta_0$, the map 
$$
T:[0,L) \times[0,\eta_0]\rightarrow \overline{\Om}_{\eta_0}
$$
defined by
$$
T(s,y):=\gamma(s)-y\nu_\Om(\gamma(s))
$$
is a local $C^1$--diffeomorphism. Under this map, the distance from a point $T(s,y)\in \Om$ to $\pa \Om$ is $y$. We will refer to the nearest point projection of a point $x\in \Om$ onto $\pa \Om$ by $\sigma(x)$ and the projection $(s,y)\rightarrow(s,0)$ onto the $s$--axis in $\R^2$ by $\pi$. Note that with these conventions, $$T \circ \pi= \sigma \circ T.$$\par
The following lemma provides a lower bound on the interior perimeter of a finite perimeter set $V \subset \Om_ \eta$ in terms of the length of its projection $\sigma(V)$ onto $\pa \Om$ and will be needed in the proof of \cref{corral2}.
\begin{lemma}\label{dist}
Let $V$ be a set of finite perimeter such that $V \subset T([a,b] \times [0,\eta))$ for some $\eta<\eta_0$. Then if we set $\kappa_\infty:= \max\{ |\gamma_{ss}| \}$ (the maximum of the curvature of $\pa \Om$) we have the estimates
\begin{equation} \label{close1}
(1-\eta \kappa_\infty)\h(\sigma(V)) \leq \h( \pa V \cap T((a,b) \times (0,\eta)))
\end{equation}
and
\begin{equation}\label{close2}
\h(\pa V \cap \pa \Om) \leq \h(\sigma(V)).
\end{equation} 
\end{lemma}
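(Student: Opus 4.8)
The plan is to transfer everything to the flat boundary coordinates supplied by $T$, in which the projection onto $\pa\Om$ becomes the orthogonal projection onto the $s$--axis and is $1$--Lipschitz, and then to estimate the interior perimeter of $V$ by a one--dimensional slicing (Fubini) argument. Set $\tilde V:=T^{-1}(V)\subset [a,b]\times[0,\eta)$; this is again a set of finite perimeter because $T$ is a $C^1$--diffeomorphism. Differentiating $T(s,y)=\gamma(s)-y\nu_\Om(\gamma(s))$ and using $\tfrac{d}{ds}\nu_\Om(\gamma(s))=\kappa(s)\gamma'(s)$ for the arclength parametrization $\gamma$, one finds that in the orthonormal frames $\{e_s,e_y\}$ on the source and $\{\gamma'(s),-\nu_\Om(\gamma(s))\}$ on the target, $DT(s,y)=\mathrm{diag}\big(1-y\kappa(s),\,1\big)$, where $|\kappa(s)|\le\kappa_\infty$. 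Hence $\det DT(s,y)=1-y\kappa(s)>0$ for $y<\eta_0$, and for every unit vector $\tau$,
\begin{equation}\notag
|DT(s,y)\tau|\ \ge\ \min\{\,1-y\kappa(s),\,1\,\}\ \ge\ 1-\eta\kappa_\infty,\qquad 0\le y<\eta<\eta_0.
\end{equation}

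For \eqref{close1}, I would first convert the ambient perimeter of $V$ into the perimeter of $\tilde V$ in the strip via the change--of--variables formula for sets of finite perimeter under a $C^1$--diffeomorphism (recall $\pa V=\pa^*V$ up to an $\h$--null set): with $\tau_{\tilde V}$ the unit tangent to $\pa^*\tilde V$,
\begin{equation}\notag
\h\big(\pa V\cap T((a,b)\times(0,\eta))\big)=\int_{\pa^*\tilde V\,\cap\,(a,b)\times(0,\eta)}|DT\,\tau_{\tilde V}|\,d\h\ \ge\ (1-\eta\kappa_\infty)\,\h\big(\pa^*\tilde V\cap (a,b)\times(0,\eta)\big).
\end{equation}
Next I would discard the horizontal component of the variation, keeping only the vertical one, and apply the one--dimensional slicing formula for $BV$ functions: denoting by $\tilde V_s\subset(0,\eta)$ the vertical slice at $s$,
\begin{equation}\notag
\h\big(\pa^*\tilde V\cap (a,b)\times(0,\eta)\big)\ \ge\ |D_y\mathbbm{1}_{\tilde V}|\big((a,b)\times(0,\eta)\big)=\int_a^b \#\big(\pa^*\tilde V_s\cap(0,\eta)\big)\,ds.
\end{equation}
For $\mathcal{L}^1$--a.e.\ $s$ in the essential projection of $V$, the slice $\tilde V_s$ is a nontrivial proper finite--perimeter subset of $(0,\eta)$, so it has at least one essential boundary point there; thus the last integral is at least the $\mathcal{L}^1$--measure of the essential projection of $\tilde V$ onto the $s$--axis, which equals $\h(\sigma(V))$ since $T(\cdot,0)=\gamma$ is an arclength parametrization and $V$ is represented by its measure--theoretic closure. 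Concatenating the three displays yields \eqref{close1}. Inequality \eqref{close2} is then immediate: since the representative of $V$ is its measure--theoretic closure, $\pa V\subset V$, and since $\sigma$ is the identity on $\pa\Om$ we get $\pa V\cap\pa\Om=\sigma(\pa V\cap\pa\Om)\subset\sigma(V)$, hence $\h(\pa V\cap\pa\Om)\le\h(\sigma(V))$.

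The only genuinely delicate point is the slicing step together with the accompanying measure--theoretic bookkeeping: one must know that for $\mathcal{L}^1$--a.e.\ relevant $s$ the slice $\tilde V_s$ is neither $\mathcal{L}^1$--null nor all of $(0,\eta)$ — so that it contributes at least one jump strictly inside $(0,\eta)$ — and one must reconcile the set--theoretic image $\sigma(V)$ with the essential projection appearing in the Fubini computation, which is exactly where the choice of the measure--theoretic closure of $V$ as representative is used. Everything else — the explicit form of $DT$, the $C^1$ change of variables for perimeter, and the pointwise bound $|DT\tau|\ge 1-\eta\kappa_\infty$ — is routine once the coordinate frame is fixed.
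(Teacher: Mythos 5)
Your proof is correct in outline and rests on the same geometric observation as the paper's: in the boundary coordinates $T(s,y)=\gamma(s)-y\nu_\Om(\gamma(s))$ the differential is $DT=\mathrm{diag}(1-y\kappa(s),1)$ in the natural orthonormal frames, giving the pointwise lower bound $|DT\tau|\ge 1-\eta\kappa_\infty$. Where you diverge from the paper is in how you pass from that pointwise bound to the perimeter estimate. The paper first treats the case where $\pa V\cap T((a,b)\times(0,\eta))$ is a single smooth arc-length-parametrized curve, computes the length integral by hand, bounds the integrand, concludes by noting that the curve's $s$-projection must cover $\pi(T^{-1}V)$, then sums over components, and finally approximates a general finite-perimeter set by smooth super-level sets of mollifications and takes limits. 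Your route replaces that last (and most delicate) step by the $BV$ slicing identity: you push $\text{Per}(V,\cdot)$ through the diffeomorphism, discard the horizontal component of $D\mathbbm{1}_{\tilde V}$, and use $|D_y\mathbbm{1}_{\tilde V}|\big((a,b)\times(0,\eta)\big)=\int_a^b\h^0(\pa^*\tilde V_s\cap(0,\eta))\,ds$ together with the observation that each nontrivial, non-full vertical slice contributes at least one jump. This is cleaner in that it works directly with the general set of finite perimeter and avoids justifying both the smooth reduction and the convergence of $\h(\sigma(V_n))$ and of localized perimeters in the approximation argument, which the paper does not actually spell out.

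Both proofs, yours and the paper's, hinge on the same measure-theoretic fact that you correctly flag as the delicate point: for $\mathcal{L}^1$-a.e.\ $s\in\sigma(V)$ (where $V$ is represented by its measure-theoretic closure), the slice $\tilde V_s$ is neither essentially empty nor essentially all of $(0,\eta)$, and the essential $s$-projection of $\tilde V$ has the same $\mathcal{L}^1$-measure as $\h(\sigma(V))$. The ``not all of $(0,\eta)$'' part is exactly where the hypothesis $V\subset T([a,b]\times[0,\eta))$ together with the measure-theoretic-closure convention is used (if the slice were full on a set of $s$'s of positive measure, a density argument would put a point $(s,\eta)$ into $V$). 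You state but do not prove these facts; the paper, for its part, hides the same issue inside the step ``$(d-c)\ge\h(\pi(T^{-1}V))$'' in the smooth case and inside the unspecified limit passage. So your proof is on equal footing with the paper's on this point, and more transparent about where the care is needed. Your argument for \eqref{close2} ($\pa V\subset V$ under the measure-theoretic-closure convention, $\sigma=\mathrm{id}$ on $\pa\Om$) is simple and correct; the paper merely asserts it is immediate.
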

\begin{remark}Regarding \cref{dist}, we remark that sets of finite perimeter which differ by a set of Lebesgue measure zero (and are thus equivalent) could have projections onto $\partial \Om$ which differ by a set of possible large $\h$ measure. This would invalidate \eqref{close1}. Recall however that  we employ the convention for a set $V$ of finite perimeter, $x \in V$ if and only if $\overline{\theta}(V,x)>0$. It is this representative of $V$ for which the lemma holds.\end{remark}
\begin{proof}
Let us first consider the case where $\pa V \cap T((a,b) \times (0,\eta))$ is a single smooth curve. Let $(s(t),y(t)):(c,d)\rightarrow (a,b) \times (0,\eta)$ be a smooth curve parametrized by arc--length such that $T \circ (s,y):(c,d) \rightarrow \Omega$ parametrizes $\pa V \cap T((a,b) \times (0,\eta))$. Using the chain rule on $T\circ (s,y)$ and the identities $\langle \gamma_s, \nu_\Om\rangle=0$, $\langle \nu_\Om,(\nu_\Om)_s\rangle=0$, and $1=|\gamma_s|^2=|\nu_\Om|^2$, we write
\begin{align}
\notag \h( \pa V \cap T((a,b) \times (0,\eta)))&=\int_c^d \langle (T \circ (x,y))',(T \circ (x,y))'\rangle^{1/2} \,dt \notag \\  \notag
&=\int_c^d \langle\gamma_s s'+y'\nu_\Om+y (\nu_\Om)_ss',\gamma_s s'+y'\nu_\Om+y (\nu_\Om)_ss'\rangle^{1/2} \,dt \notag \\
&= \notag \int_c^d \left(s'^2+2s'^2y\langle \gamma_s,(\nu_\Om)_s \rangle+y'^2+y^2s'^2| (\nu_\Om)_s|^2\right)^{1/2} \,dt.
\end{align}
Recall that by construction, $s'^2+y'^2=1$, which also implies $s'^2\geq s'^4$. Also note that $|\langle \gamma_s,(\nu_\Om)_s \rangle| = |(\nu_\Om)_s|$, $|(\nu_\Om)_s |\leq \kappa_\infty$, and $|y|\leq \eta$. Continuing the previous line using these observations yields the estimate
\begin{align}
\notag \h( \pa V \cap T((a,b) \times (0,\eta)))	&\geq \int_c^d  (1-2|s'^2y||(\nu_\Om)_s |+s'^4y^2|(\nu_\Om)_s |^2)^{1/2}\,dt \\ \notag
&= \int_c^d (1-|s'^2y||(\nu_\Om)_s |)\,dt \\ \notag 
&\geq \int_c^d 1-\eta\kappa_\infty\,dt \\ \label{case} 
&\geq (1-\eta\kappa_\infty)\h(\pi(T^{-1}(V))).
\end{align}
Next, since $\gamma$ parametrizes the boundary $\pa \Om$ by arc--length, we see that $\restr{T^{-1}}{\pa \Omega}$  preserves $\h$ measure. In addition, $T^{-1} \circ \sigma= \pi \circ T^{-1}$. These two facts imply that $$\h(\pi(T^{-1}(V)))=\h(T^{-1}(\sigma(V)))=\h(\sigma(V)).$$
This equality in conjunction with \eqref{case} yields \eqref{close1}, namely
\begin{equation}\notag
(1-\eta\kappa_\infty)\h(\sigma(V)) \leq \h( \pa V \cap T((a,b) \times (0,\eta)))
\end{equation}
for the case where $\pa V \cap T((a,b) \times (0,\eta_0))$ is a single smooth curve. For $\pa V \cap T((a,b) \times (0,\eta))$ still smooth but with more than one component, we apply the above calculation to each component and sum the results to obtain \eqref{close1}. Finally, for such $V$, \eqref{close2} is immediate.\par
Now, for arbitrary $V$ with finite perimeter, by mollifying $V$ and using super--level sets of the mollifications as in Chapter 1 of \cite{giusti}, we obtain a sequence of smooth sets $V_n$ approximating $V$ and take limits in \eqref{close1} for $V_n$.
\end{proof}
We conclude the preliminaries with a standard result which we will need in proof of \cref{main}. The proof is found in \cite[p. 1065]{ziemer}.
\begin{lemma}\label{plane}
Let $E\subset \R$ be a set of finite perimeter. Suppose there exists a point $x_0 $ in $\pa E$ with the property that for some closed cube $Q_0$ centered at $x_0$, $\nu_E(x)$ is a constant $v$ for all $x \in Q_0 \cap \pa E$. Then $Q_0\cap \pa E=Q_0\cap P$, where $P$ is the hyperplane containing $x_0$ with normal $v$.
\end{lemma}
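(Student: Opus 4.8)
The plan is to exploit the one--dimensional structure of sets of finite perimeter. After a rotation we may take $v = e_N$ and after a translation $x_0 = 0$; write points of $\R^N$ as $(x',x_N)$ with $x' \in \R^{N-1}$ (in the setting of \cref{main}, $N = 2$). I read the hypothesis on $\nu_E$ as saying $\nu_E \equiv e_N$ on $Q_0 \cap \pa^* E$, the reduced boundary, which by Federer's theorem ($\mathcal{H}^{N-1}(\pam E \setminus \pa^* E) = 0$) loses nothing since $|D\mathbbm{1}_E|$ is carried by $\pa^* E$. First I would invoke De Giorgi's structure theorem, $D\mathbbm{1}_E = \nu_E\,|D\mathbbm{1}_E|$ as vector measures; restricting to the open cube $\mathrm{int}(Q_0)$ this reads $D\mathbbm{1}_E = e_N\,\mu$, where $\mu$ is the nonnegative measure $|D\mathbbm{1}_E|$ restricted to $\mathrm{int}(Q_0)$. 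Hence on $\mathrm{int}(Q_0)$ we have $D_i\mathbbm{1}_E = 0$ for $i = 1,\dots,N-1$, while $D_N\mathbbm{1}_E = \mu \geq 0$.

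Next I would apply the slicing theory for $BV$ functions, which is the technical content cited from \cite[p.~1065]{ziemer}. Writing $\mathrm{int}(Q_0) = Q' \times I$, the vanishing of the horizontal partial derivatives forces $\mathbbm{1}_E$ to agree, up to an $\mathcal{L}^N$--null set, with a function of $x_N$ alone on $\mathrm{int}(Q_0)$; thus $E \cap \mathrm{int}(Q_0) = Q' \times J$ modulo null sets for some measurable $J \subset I$. Then $D\mathbbm{1}_E$ restricted to $\mathrm{int}(Q_0)$ equals $\bigl(\mathcal{L}^{N-1}\llcorner Q'\bigr)\otimes D\mathbbm{1}_J$ in the $e_N$--direction, so $D_N\mathbbm{1}_E \geq 0$ translates into $D\mathbbm{1}_J \geq 0$ on $I$. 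An indicator of a subset of an interval with nonnegative distributional derivative is, modulo null sets, the indicator of a half--line, so $J = I \cap (t_0,\infty)$ for some $t_0 \in \overline{I}$; that is,
\[
E \cap \mathrm{int}(Q_0) = \{\, x \in \mathrm{int}(Q_0) : x_N > t_0 \,\} \quad \text{up to an } \mathcal{L}^N\text{--null set.}
\]

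To conclude, I would pin down $t_0$ using $0 = x_0 \in \pa E$. The density at the origin of the half--space $\{x_N > t_0\}$ is $1$ when $t_0 < 0$, is $0$ when $t_0 > 0$, and is $\tfrac12$ when $t_0 = 0$; since $\overline\theta(E,0) > 0$ and $\underline\theta(E,0) < 1$, only $t_0 = 0$ survives, so $P = \{x_N = 0\}$, precisely the hyperplane through $x_0$ with normal $v$. Computing densities at each point then gives $\pa E \cap \mathrm{int}(Q_0) = P \cap \mathrm{int}(Q_0)$ for the measure--theoretic--closure representative of $E$, and a routine approximation argument near $\pa Q_0$ (or a slight shrinking of $Q_0$) upgrades this to the stated equality $Q_0 \cap \pa E = Q_0 \cap P$.

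The main obstacle is the slicing step: passing from ``$D_i\mathbbm{1}_E = 0$ on a cube for each $i < N$'' to ``$E$ is a vertical cylinder over a subset of the fiber interval, and that subset is a half--line.'' This rests on the one--dimensional characterization of $BV$ — that almost every section of a finite--perimeter set is again of finite perimeter with the expected derivative, and that a $BV$ function with a vanishing partial derivative is a.e.\ independent of that variable — which is exactly the input drawn from \cite{ziemer}. The remaining steps are elementary bookkeeping with measures and densities.
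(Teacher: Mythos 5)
Your proof is correct, and it is essentially the standard argument: pass to the reduced boundary via Federer's theorem, use the polar decomposition $D\mathbbm{1}_E = \nu_E\,|D\mathbbm{1}_E|$ to kill all tangential derivatives and to see that $D_N\mathbbm{1}_E\geq 0$, slice to reduce to the one-dimensional fact that a monotone $\{0,1\}$-valued $BV$ function is the indicator of a half-line, and then pin down the constant by the density condition $0<\overline{\theta}(E,x_0)$, $\underline{\theta}(E,x_0)<1$ at the center. The paper itself does not reprove this lemma but defers entirely to \cite[p.~1065]{ziemer}, and your write-up reproduces exactly the slicing argument used there, so there is nothing substantively different to compare.

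One small point worth tightening if you were to write this out in full: the slicing argument naturally lives on the \emph{open} cube, since $D_i\mathbbm{1}_E=0$ as a distribution is tested against $C_c^\infty(\mathrm{int}(Q_0))$, and the density computation at a point of $\partial Q_0$ is not controlled by what happens inside the cube alone. Your remark about shrinking $Q_0$ is the right fix; it is cleanest to prove the identity $\mathrm{int}(Q_0)\cap\partial E=\mathrm{int}(Q_0)\cap P$ and observe this is all that is used downstream (in the proof of \cref{main} the lemma is applied away from $\partial Q_0$). This is a cosmetic issue, not a gap in the mathematics.
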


\begin{lemma}\label{corral2}
Let $\Om\subset \R^2$ be a domain satisfying the conditions \eqref{contact} and \eqref{convex}. Then there exists $\delta=\delta(\Om)>0$ such that
\begin{equation}
\pa A' \cap \pa B' \subset \{x:d(x,\overline{PQ}) < 10\sqrt{\delta}\}\cup ( \sigma^{-1}(\Gamma_1\cup\Gamma_2)\cap\Om_{\dd})\notag
\end{equation}
for any solution $(A',B')$ of \labelcref{var}.
\end{lemma}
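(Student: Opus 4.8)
I would argue by contradiction, ``clearing out'' any stray piece of interface: assuming some $x_0\in\pa A'\cap\pa B'$ lies outside the asserted set, I would build a competitor for \eqref{var} with strictly smaller energy. First record the elementary facts. Since $(A',B')$ is admissible, $|A\triangle A'|=|B\triangle B'|=:|E|$ and $2|E|\le\delta$, so $|E|\le\delta/2$. Because $\pa A\cap\pa B=\overline{PQ}$, we have $\pa A\cap\Om=\pa B\cap\Om$ equal to the open segment $(\overline{PQ})^{\circ}$, and on $\pa\Om\setminus\{P,Q\}$ the boundary belongs entirely to $A$ or entirely to $B$. Now suppose $x_0\in\pa A'\cap\pa B'$ with $d(x_0,\overline{PQ})\ge\dd$ and $x_0\notin\sigma^{-1}(\Gamma_1\cup\Gamma_2)\cap\Om_{\dd}$. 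Then $B(x_0,\dd)\cap\overline{PQ}=\emptyset$, so $B(x_0,\dd)\cap\Om$ lies, up to a null set, in $A^i$ or in $B^i$.

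\textbf{Interior case ($d(x_0,\pa\Om)\ge\dd$).} Here $B(x_0,\dd)\subset\Om$; say $B(x_0,\dd)\subset A^i$ (the other case is symmetric). Put $g(s):=|B'\cap B(x_0,s)|$. Since $B'\cap B(x_0,s)\subset B'\cap A\subset E$ we get $g(s)\le\delta/2$, while $g(s)>0$ for every $s>0$ because $\overline\theta(B',x_0)>0$. For $A'':=A'\cup(B'\cap B(x_0,s))$, $B'':=\Om\setminus A''$ one checks $A''\triangle A\subset A'\triangle A$, so $(A'',B'')$ is admissible; comparing $F_0$ (the $\pa\Om$ terms are unchanged) and using the coarea formula, minimality of $(A',B')$ forces, for a.e.\ $s\in(0,\dd)$,
\[
\h(\pa^* B'\cap B(x_0,s))\le g'(s).
\]
The relative isoperimetric inequality in $B(x_0,s)$ gives $\h(\pa^* B'\cap B(x_0,s))\ge c\,g(s)^{1/2}$ once $g(s)\le\tfrac12|B(x_0,s)|$, which holds for $s\ge\sqrt\delta$. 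Hence $\tfrac{d}{ds}\big(2g^{1/2}\big)\ge c$ a.e.\ on $[\sqrt\delta,\dd]$, and integrating yields $g(\dd)\ge c^2\delta$ with $c^2$ a fixed constant, larger than $1/2$ — contradicting $g(\dd)\le\delta/2$.

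\textbf{Boundary case ($d(x_0,\pa\Om)<\dd$, $\sigma(x_0)\in\pa\Om\setminus(\Gamma_1\cup\Gamma_2)$).} The same scheme applies with the ball replaced by $\Om\cap B(x_0,s)$: one of $A',B'$ is the ``background'' near $x_0$ and the other is an intruding set of measure $\le|E|\le\delta/2$, which we carve out and fill with the background species; the relative isoperimetric inequality is used in $\Om\cap B(x_0,s)$, whose shape is uniformly controlled since $\pa\Om\in C^2$. The one new ingredient is the change of the $\pa\Om$-energy. If the intruder is the species more expensive on $\pa\Om$, removing it \emph{lowers} the boundary energy (this uses $c_1<c_2$) and the interior argument goes through unchanged. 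If the intruder is the cheaper boundary species, removing it raises the boundary energy by $(c_2-c_1)\,\h\big(\pa(\text{intruder})\cap\pa\Om\big)$; here I invoke \cref{dist} to bound $\h\big(\pa(\text{intruder})\cap\pa\Om\big)\le(1-\eta\kappa_\infty)^{-1}\h\big(\pa(\text{intruder})\cap\Om\big)$, and then dominate the latter by $\h(\pa^*(\text{intruder})\cap\Om\cap B(x_0,s))+f'(s)$, where $f(s):=|\text{intruder}\cap\Om\cap B(x_0,s)|\le\delta/2$. Absorbing, the hypothesis $c_2<c_1+c_3$ keeps the coefficient $c_3-(c_2-c_1)(1-\eta\kappa_\infty)^{-1}$ positive for $\delta$, hence $\eta\sim\sqrt\delta$, small, and minimality yields $\h(\pa^*(\text{intruder})\cap\Om\cap B(x_0,s))\le K f'(s)$ with $K=K(\{c_i\},\Om)$. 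Combined with the relative isoperimetric inequality and integrated over $[\sqrt\delta,\dd]$, this again forces $f(\dd)\ge(c^2/K^2)\,\delta>\delta/2$ for $\delta$ small — a contradiction, completing the proof.

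\textbf{Main obstacle.} The delicate step is the boundary case in the ``wrong orientation'', where cutting out the intruding set \emph{increases} the boundary energy: this is exactly where both structural inequalities enter — $c_1<c_2$ to fix which side is which, and $c_2<c_1+c_3$ (through \cref{dist}) to dominate the boundary cost by the interfacial cost. One has to bookkeep the constants to be sure the integrated differential inequality beats the measure budget $\delta/2$ across a band of width $\dd$; the factor $10$ is chosen precisely so that this comparison closes (for $\delta$ small).
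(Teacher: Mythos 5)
Your local excision-and-isoperimetry scheme is genuinely different from the paper's, which follows Ziemer by slicing globally — first with $\dist(\cdot,\pa B)$ in Step~1, then with the tubular coordinate along $\pa\Om$ in Step~2 — and selects cut locations via the mean value theorem so that the side-cost of the cut is small or zero. Your interior case is sound: the excision is interior so the two orientations are indeed symmetric, the admissibility check $A''\triangle A\subset A'\triangle A$ is correct, and the relative isoperimetric constant of a disk is large enough that $g(\dd)\ge(81c^2/4)\delta$ far exceeds $\delta/2$.

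The boundary case in the ``wrong orientation'' has a genuine gap. After invoking \cref{dist} and $c_2<c_1+c_3$ you arrive at $\h(\pa^*(\text{intruder})\cap B_s\cap\Om)\le K f'(s)$ with $K\to\frac{c_3+(c_2-c_1)}{c_3-(c_2-c_1)}$ as $\delta\to0$. Integrating the combined inequality $f'(s)\ge(c'/K)f(s)^{1/2}$ over $[\sqrt\delta,\,10\sqrt\delta]$ gives $f(\dd)\ge(81c'^2/(4K^2))\,\delta$. This exceeds $\delta/2$ only if $c'/K>\sqrt2/9$, a $\delta$-independent constraint on $\{c_i\}$ and $\Om$ that is not implied by $c_1<c_2<c_1+c_3$: when $c_2-c_1$ is close to $c_3$, $K$ is large and no contradiction is reached. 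The phrase ``for $\delta$ small'' does not save this, since both sides of the final comparison scale linearly in $\delta$. The paper avoids exactly this pitfall: in its Step~2 the dichotomy is to cut at a ``free'' slice $T(\{d\}\times(0,\dd))$ with zero interface, where $c_1+c_3(1-\dd\kappa_\infty)>c_2$ gives a strict gain with no quantitative comparison; and in the remaining case the intrusion covers the entire fixed-length arc $\gamma(a_1,b_1)$, so the energy gain competes against the fixed quantity $(c_1+c_3(1-\dd\kappa_\infty)-c_2)(b_1-a_1)$ rather than against the $\delta$-proportional budget, and the side-cost $20\sqrt\delta$ is absorbed by shrinking $\delta$ — this is precisely condition \eqref{>2}. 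Your argument could be repaired by replacing the factor $10$ in $\dd$ with one depending on $\{c_i\}$ and $\Om$, but the lemma hard-codes that factor, so as stated your proof does not close.
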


\begin{proof}[Proof of \cref{corral2}] For the convenience of the reader, we first summarize the main arguments. Let $(A',B')$ be a solution of \eqref{var}. The proof is divided into two steps.\par
In the first step, we argue that $\pa A' \cap \pa B'$ must be contained in a neighborhood of $\pa B$. The proof of this step follows closely the proof in \cite{ziemer}. The main idea is that since $(A',B')$ is close to $(A,B)$ in $L^1$, it will incur significant cost $c_3(\pa A' \cap \pa B')$ if it protrudes too far from $\pa B$. Next, using the result of the first step, we are able to further restrict $\pa A' \cap \pa B'$ to a neighborhood of $\overline{PQ}\cup\Gamma_1 \cup \Gamma_2$. Since $\pa A'\cap \pa B'$ is contained in a neighborhood of $\pa B$, the result of \cref{dist} implies that near $\pa \Om\cap \pa B$, the interior perimeter $\h(\pa A' \cap \pa B')$ is bounded below by the boundary perimeter $\h(\pa A' \cap \pa \Om)$. Together with the inequality $c_1+c_3>c_2$, this will allow us to enlarge $B'$ into $B''$ near $\pa \Om$ and exchange greater cost associated with
$$
c_1\h(\pa A' \cap \pa \Om)+c_3\h(\pa A' \cap \pa B')
$$
for lesser cost associated with
$$c_2\h(\pa B'' \cap \pa \Om).$$\par
We begin with some preliminaries regarding the parameters in the proof. The parameter $\delta$ in the statement of the lemma depends on the constants $\{c_i\}$ and the domain $\Om$. Let $\gamma:[p,q]\rightarrow \pa \Om$ parametrize $\pa B \cap \pa \Om$ with $\gamma(p)=P$ and $\gamma(q)=Q$, and choose $p<b_1<b_2<q$ such that $\gamma([p,b_1])\subset \Gamma_1$ and $\gamma([b_2,q])\subset \Gamma_2$. Fix any $a_1\in(p,b_1)$ and then choose $a_2 \in (b_2,q)$ such that $b_1-a_1=a_2-b_2$ (this is merely for convenience later). We choose $\delta$ small enough to satisfy two conditions:
\begin{equation}\label{>1}
T([a_1,a_2]\times\{\dd\})\subset \{ x : d(x, \pa B) = \dd \}
\end{equation}
and
\begin{equation}\label{>2}
0<\f{20\sqrt{\delta}}{c_1+c_3(1-\dd\kappa_\infty)-c_2}<b_1-a_1.
\end{equation}
Recall our assumption that $c_1+c_3>c_2$, which implies that any sufficiently small $\delta$ satisfies \eqref{>2}. Also, the first condition might not hold for $\delta$ too large because a point in $B$ at distance $\dd$ from $\pa \Om$ might be within $\dd$ of $\overline{PQ}$ and hence not in  $\{ x : d(x, \pa B) = \dd \}$; see Fig. \ref{Fig. 3}. We now proceed with the proof.\par
\afterpage{
\begin{figure}
\includegraphics{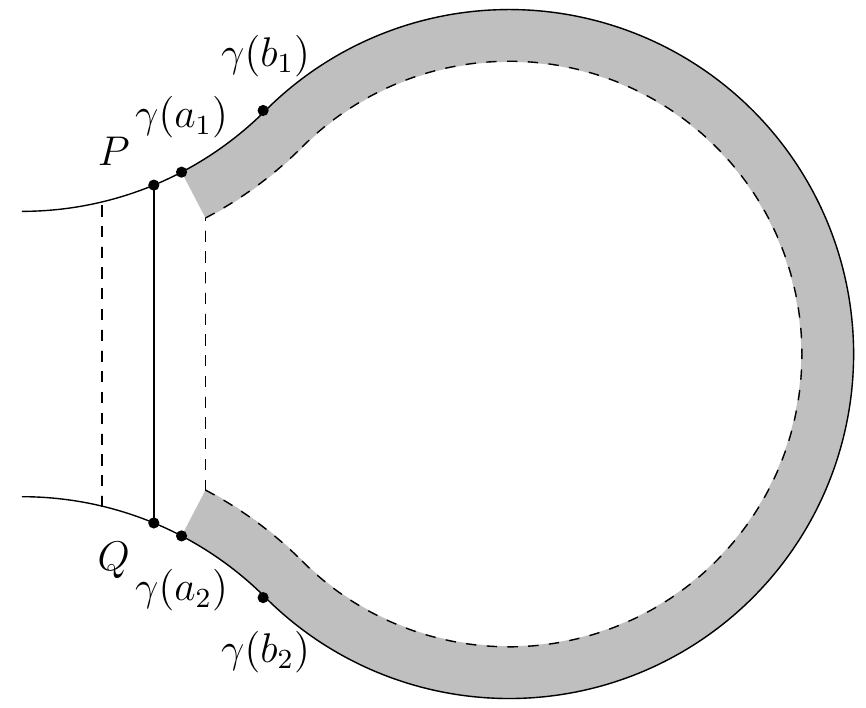}
\caption{The shaded region is $T([a_1,a_2]\times[0,\dd])$. The dashed curves comprise $\{x\in \Om:d(x,\pa B)=\dd\}$. Note that $T([a_1,a_2]\times\{\dd\})$ is entirely contained in this level set. }
 \label{Fig. 3}
\end{figure}
}
$Step$ $1$. We argue that	
\begin{equation}\label{corral1}
\pa A' \cap \pa B' \subset \{x\in \Om:\textup{dist}(x,\pa B) < 10\sqrt{\delta}\}
;\end{equation}
see Fig. \ref{Fig. 4}. To prove this, one follows closely the method of proof of \cite[Lemma 3.1]{ziemer} with minor changes. For convenience, we summarize the argument here. First, we argue that $\{ x \in A: \textup{dist}(x,\pa B) \geq \dd\}\cap B'=\emptyset$. Consider slices $S_t=\{ x \in A:\textup{dist}(x,\pa B)=t\}$. If there exists a $t_0<10\sqrt{\delta}$ such that $\h(S_{t_0} \cap B')=0$, then setting
$$
A''=A' \cup \{ x \in A: \textup{dist}(x,\pa B) \geq t_0\}
$$ 
yields a modified partition $(A'',B'')$ which still satisfies the $L^1$ constraint in \eqref{var}. Assume for contradiction $\{ x \in A: \textup{dist}(x,\pa B) > t_0\}\cap B'\neq\emptyset$. Then, by following the calculations from \cite[p. 1067]{ziemer}, we see that $(A'',B'')$ satisfies
$$
c_3\h(\pa A'' \cap \pa B'') < c_3 \h(\pa A' \cap \pa B').
$$
Note that since $A' \subset A''$, $B''\subset B'$, and $c_1<c_2$,
$$
c_1\h(\pa A'' \cap \pa \Om)+c_2\h(\pa B'' \cap \pa \Om)\leq c_1\h(\pa A' \cap \pa \Om)+c_2\h(\pa B' \cap \pa \Om),
$$
so in fact $F_0(A'',B'')<F_0(A',B')$, which contradicts the fact that $(A',B')$ is a minimizer of \eqref{var}. We conclude that $\{ x \in A: \textup{dist}(x,\pa B) > t_0\}\cap B'= \emptyset$, which together with the definition of $t_0$ yields the desired result. On the other hand, suppose there does not exist $t_0<10\sqrt{\delta}$ such that $\h(S_{t_0} \cap B')=0$. The $L^1$ constraint from \eqref{var} and the mean value theorem yield $T_0\in(\sqrt{\delta},2\sqrt{\delta})$ such that $\h(S_{T_0}\cap B)<\sqrt{\delta}$. It can then be shown (using the calculations from \cite[p. 1068]{ziemer}) that setting
$$
A''=A' \cup \{ x \in A: \textup{dist}(x,\pa B) \geq T_0\}
$$
results in a partition $(A'',B'')$ such that $F_0(A'',B'')<F_0(A',B')$, which again contradicts the minimality of $(A',B')$. This is due to the fact that $T_0$ was chosen so that the interior perimeter gain of at most $\sqrt{\delta}$ from $c_3\h(\pa A'' \cap S_{T_0})$ is offset by the loss of perimeter of $c_3\h(\pa A' \cap \{x\in A : 2\sqrt{\delta}<\textup{dist}(x,\pa B)\})$. In either case, we obtain that $\{ x \in A: \textup{dist}(x,\pa B) \geq \dd\}\cap B'=\emptyset$. 
The corresponding result, that $\{ x \in B: \textup{dist}(x,\pa B) \geq \dd\}\cap A'=\emptyset$, is proved similarly, by setting
$$
B'' = B' \cap \{x\in B : \textup{dist}(x,\pa B)\geq t\}
$$
for some carefully chosen $t$. These two results then yield \eqref{corral1}, since we see that
\begin{align}\notag
\pa A' \cap \pa B' &\subset
\Om \setminus \left( \{x \in A : \textup{dist}(x,\pa B) \geq \dd\} \cup \{x \in B : \textup{dist}(x,\pa B) \geq \dd\}\right) \\ \notag
&= \{x\in \Om:\textup{dist}(x,\pa B) < 10\sqrt{\delta}\}.
\end{align}
Observe that in Step $1$, we have avoided modifying $(A',B')$ by enlarging $B'$ near $\pa \Om$. The reason for this is that a priori, the loss of interior perimeter resulting from such a modification might be offset by an increase in the cost on $\pa \Om$ due to the inequality $c_1<c_2$. This difficulty will be addressed in the subsequent step.
\par
\afterpage{
\begin{figure}
\includegraphics{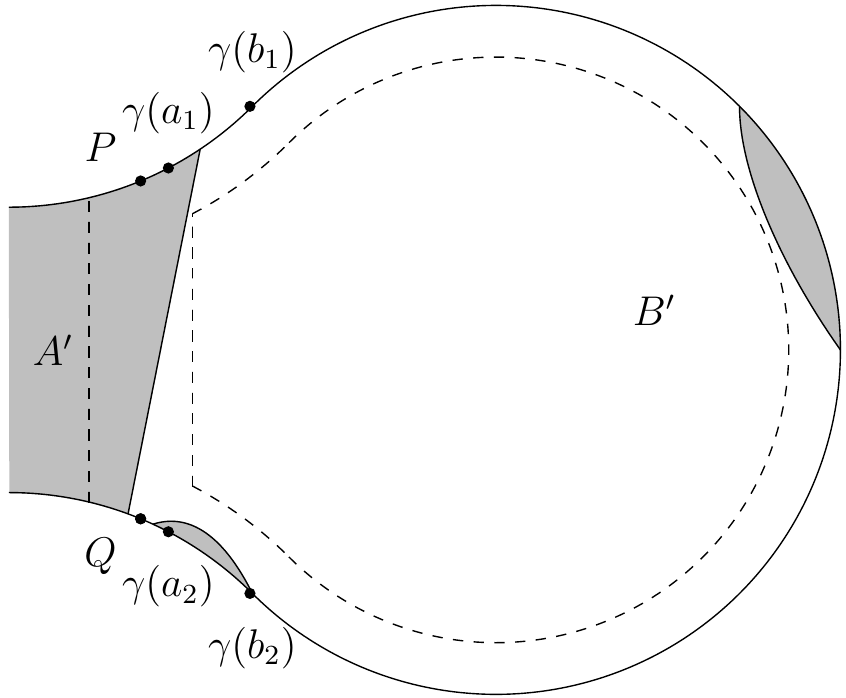}
\caption{The shaded set is $A'$ and the unshaded set is $B'$. The boundary $\pa A' \cap \pa B'$ is contained in a $\dd$ neighborhood of $\partial B$ due to Step $1$.}
\label{Fig. 4}
\end{figure}
}
$Step$ $2$. The second step, similar to the first, has two main parts. First, we eliminate ``islands" of $A'$ in $T((a_1,a_2)\times(0,\dd))$ separated from the main portion of $A'$ by slices of the form $T(\{d\}\times(0,\dd))$. Consider the set $\{d \in (a_1,a_2) : \h(T(\{d\}\times(0,\dd)) \cap A')=0\}$, and suppose it is non--empty. We choose $d_1$ and $d_2$ from this set with $d_1\leq d_2$; if possible we choose $d_i$ such that $\gamma(d_i) \subset \Gamma_i$. See Fig. 4 for an example of the case where $d_1$ can be chosen so that $\gamma(d_1)\in \Gamma_1$ but $d_2$ cannot be chosen so that $\gamma(d_2)\in \Gamma_2$. Now define 
$$
B''=B' \cup T([d_1,d_2]\times[0,\dd])
$$
and $A''$ as the measure--theoretic closure of $\Om \setminus B''$. Note that $(A'',B'')$ still satisfies the $L^1$ constraint from \eqref{var}. We claim that 
\begin{equation}\notag
B'' \triangle B', A'' \triangle A' \subset T([d_1,d_2]\times[0,\dd)).
\end{equation}
This will simplify the calculation below of $F_0(A',B')-F_0(A'',B'')$ since it will allow us to only examine $T([d_1,d_2]\times[0,\dd))$. It is straightforward to see that $B'' \triangle B'$ and $A'' \triangle A'$ are contained in $T([d_1,d_2]\times[0,\dd])$, so to prove the claim it remains to show that $B'' \triangle B'$ and $A'' \triangle A'$ have empty intersection with $T([d_1,d_2]\times\{\dd\})$. This holds because of the choice of $\delta$ in \eqref{>1}. By the result of Step $1$ and the inclusion \eqref{>1}, $T([d_1,d_2]\times\{\dd\})\subset (B')^i$, which implies it has empty intersection with $B'' \triangle B'$ and $A'' \triangle A'$. Therefore, we can calculate the difference in energies $F_0(A',B')-F_0(A'',B'')$ as 
\begin{align}\label{five}
F_0(A',B')-F_0(A'',B'')&=c_1\h(\gamma(d_1,d_2)\cap\pa A')+c_2\h(\gamma(d_1,d_2)\cap\pa B') \\ \notag &\hspace{.6cm}+c_3\h(T([d_1,d_2]\times(0,\dd))\cap \pa A')-c_2\h(\gamma(d_1,d_2)).
\end{align}
Assuming that $|A'\cap T([d_1,d_2]\times[0,\dd])|>0$, we will show that $F_0(A',B')-F_0(A'',B'')>0$, which contradicts the minimality of $(A',B')$. First, we bound the first three terms on the right hand side of \eqref{five} from below. As a preliminary estimate, note that \cref{dist} with $V=T((d_1,d_2)\times(0,\dd))\cap A'$ implies that
\begin{equation}\notag
\h(\gamma(d_1,d_2)\cap \pa A')\leq \h(\gamma(d_1,d_2)\cap\sigma(A')).\end{equation}
Using this in conjunction with the inequality $c_1-c_2<0$ we write for the first two terms of \eqref{five}
\begin{align}\notag
c_1\h(\gamma(d_1,d_2)\cap\pa A')+c_2\h(&\gamma(d_1,d_2)\cap\pa B') \\ \notag
&= (c_1-c_2)\h(\gamma(d_1,d_2)\cap \pa A')+ c_2\h(\gamma(d_1,d_2)) \\ \notag
&\geq (c_1-c_2)\h(\gamma(d_1,d_2)\cap\sigma(A')) + c_2\h(\gamma(d_1,d_2)) \\ \label{one}
&= c_1\h(\gamma(d_1,d_2)\cap\sigma(A'))+c_2\h(\gamma(d_1,d_2)\setminus\sigma(A')).
\end{align}
For the third term of \eqref{five}, also apply \cref{dist} to obtain
\begin{equation}\label{two}
c_3\h(T([d_1,d_2]\times(0,\dd))\cap \pa A') \geq c_3(1-\dd\kappa_\infty) \h(\gamma(d_1,d_2)\cap \sigma(A')).
\end{equation}
Then adding \eqref{one} and \eqref{two} yields
\begin{align}\label{arg}
c_1\h(\gamma(d_1,d_2)&\cap\pa A')+c_2\h(\gamma(d_1,d_2)\cap\pa B')+c_3\h(T([d_1,d_2]\times(0,\dd))\cap \pa A') \\ \notag
&\geq c_1\h(\gamma(d_1,d_2)\cap\sigma(A'))+c_2\h(\gamma(d_1,d_2)\setminus\sigma(A'))\\ \notag &\hspace{.6cm}+c_3(1-\dd\kappa_\infty)\h(\gamma(d_1,d_2)\cap\sigma(A')).
\end{align}
Now observe that \eqref{>2} implies that $c_1+c_3(1-\dd\kappa_\infty)>c_2$ and that the assumption $|A'\cap T([d_1,d_2]\times[0,\dd])|>0$ implies $\h(\gamma(d_1,d_2)\cap\sigma(A'))>0$. Together these observations imply that
\begin{equation}\notag
c_1\h(\gamma(d_1,d_2)\cap\sigma(A'))+c_3(1-\dd\kappa_\infty)\h(\gamma(d_1,d_2)\cap\sigma(A'))>c_2\h(\gamma(d_1,d_2)\cap\sigma(A')).
\end{equation}
Using this inequality for the first and third terms on the right hand side of \eqref{arg}, we obtain our estimate for the first three terms of \eqref{five}:
\begin{align}\notag
c_1\h(\gamma(d_1,d_2)&\cap\pa A')+c_2\h(\gamma(d_1,d_2)\cap\pa B')+c_3\h(T([d_1,d_2]\times(0,\dd))\cap \pa A') \\ \notag
&> c_2\h(\gamma(d_1,d_2)\cap\sigma(A'))+c_2\h(\gamma(d_1,d_2)\setminus\sigma(A')) \\ \label{three}
&=c_2\h(\gamma(d_1,d_2)).
\end{align}
Combining \eqref{five} and \eqref{three} yields
\begin{equation}\notag
F_0(A',B')-F_0(A'',B'')>c_2\h(\gamma(d_1,d_2))-c_2\h(\gamma(d_1,d_2)) =0.
\end{equation}
But this contradicts the fact that $(A',B')$ is a solution to \eqref{var}, so it must be the case that $|A'\cap T([d_1,d_2]\times[0,\dd])|=0$. It follows that $\pa A' \cap T((d_1,d_2)\times[0,\dd])=\emptyset$, which together with the result of Step 1 gives
\begin{equation}
\notag \pa A' \cap \pa B' \subset \{x\in\Om:\textup{dist}(x,\pa B) < 10\sqrt{\delta}\} \setminus T((d_1,d_2)\times[0,\dd])
\end{equation}
If it was possible to choose $\gamma_i(d_i) \in \Gamma_i$ for $i=1,2$, the lemma is proved, since
\begin{align}\label{contain}
\{x\in\Om:\textup{dist}(x,\pa B) < &10\sqrt{\delta}\} \setminus T((d_1,d_2)\times[0,\dd]) \\ \notag&\subset \{x:\textup{dist}(x,\overline{PQ}) < 10\sqrt{\delta}\}\cup ( \sigma^{-1}(\Gamma_1\cup\Gamma_2)\cap\Om_{\dd}).
\end{align}
Then it remains consider the scenario in which $\{d \in (a_1,a_2) : \h(T(\{d\}\times(0,\dd)) \cap A')=0\}=\emptyset$ or $d_i$ cannot be chosen such that $\gamma(d_i) \in \Gamma_i$. Fix $i=1$; $i=2$ is handled similarly. Define
\begin{equation}\notag
B''=B' \cup T([a_1,b_1]\times[0,\dd])
\end{equation}
and $A''$ as the measure theoretic closure of $\Omega \setminus B''$. Note that $(A'',B'')$ still satisfies the $L^1$ constraint. As in the previous part of Step 2, we can calculate
\begin{align}\label{five1}
F_0(A',B')-F_0(A'',B'')&=c_1\h(\gamma(a_1,b_1)\cap\pa A')+c_2\h(\gamma(a_1,b_1)\cap\pa B') \\ \notag &\hspace{.6cm}+c_3\h(T([a_1,b_1]\times(0,\dd))\cap \pa A')-c_2\h(\gamma(a_1,b_1)) \\ \notag &\hspace{.6cm}-c_3\h(T(\{a_1,b_1\}\times(0,\dd))\cap \pa A').
\end{align}
First, we estimate the fifth term from below:
\begin{equation}\label{zero1}
-c_3\h(T(\{a_1,b_1\}\times(0,\dd))\cap \pa A') \geq -20\sqrt{\delta}
.\end{equation}
Using the same reasoning as preceding \eqref{arg}, we can estimate the first three terms of \eqref{five1}:
\begin{align}\notag
c_1\h(\gamma(a_1,b_1)&\cap\pa A')+c_2\h(\gamma(a_1,b_1)\cap\pa B')+c_3\h(T([a_1,b_1]\times(0,\dd))\cap \pa A') \\ \notag
&\geq c_1\h(\gamma(a_1,b_1)\cap\sigma(A'))+c_2\h(\gamma(a_1,b_1)\setminus\sigma(A'))\\ \notag &\hspace{.6cm}+c_3(1-\dd\kappa_\infty)\h(\gamma(a_1,b_1)\cap\sigma(A')).
\end{align}
The assumption that $d_1$ either doesn't exist or can't be chosen such that $\gamma(d_1)\subset \Gamma_1$ implies that $\gamma(a_1,b_1)\cap\sigma(A')=\gamma(a_1,b_1)$. After using this in the previous inequality and combining with \eqref{zero1}, we obtain
\begin{align}\label{uhoh}
F_0(A',B')-F_0(A'',B'')
&\geq c_1\h(\gamma(a_1,b_1))+c_3(1-\dd\kappa_\infty)\h(\gamma(a_1,b_1))-20\sqrt{\delta} \\ \notag
&= c_1(b_1-a_1)+c_3(1-\dd\kappa_\infty)(b_1-a_1)-20\sqrt{\delta}.
\end{align}
Recalling \eqref{>2}, we rearrange it as
$$
 c_1(b_1-a_1)+c_3(1-\dd\kappa_\infty)(b_1-a_1)-20\sqrt{\delta}>0
.$$
But it immediately follows from this inequality and \eqref{uhoh} that $F_0(A',B')-F_0(A'',B'')>0$, which contradicts the minimality of $(A',B')$. It must then be the case that $\{d \in (a_1,a_2) : \h(T(\{d\}\times(0,\dd)) \cap A')=0\}\neq\emptyset$ and $\gamma(d_1) \in \Gamma_1$. It can be argued similarly that $\gamma(d_2)\in \Gamma_2$. In view of \eqref{contain}, the lemma is proven.
\end{proof}

Finally we can prove \cref{main}.
\begin{proof}[Proof of \cref{main}] Fix $\delta$ small enough to satisfy the assumptions of \cref{corral2}. Then the direct method yields the existence of a solution $(A',B')$ to \eqref{var}. By \cref{corral2}, we have that
$$
\pa A' \cap \pa B' \subset \{x:\textup{dist}(x,\overline{PQ}) < 10\sqrt{\delta}\}\cup ( \sigma^{-1}(\Gamma_1\cup\Gamma_2)\cap\Om_{\dd}).
$$
In particular, this implies that $(A',B')$ satisfies
\begin{equation}\label{cant}
\pa A' \cap \pa \Omega \subset \Gamma_1 \cup \Gamma_2 \cup (\pa \Om \cap \pa A),\textrm{ and } \pa B' \cap \pa \Omega \subset \Gamma_1  \cup \Gamma_2 \cup (\pa \Om \cap \pa B).
\end{equation}
We can now use a calibration argument to show that $(A,B)$ is the only minimal partition satisfying \eqref{cant} and hence the unique minimizer of \eqref{var}. Denote by $\nu_\Om$ the measure theoretic exterior normal to $\Om$. Let $(C,D)$ be an admissible partition. By Cauchy--Schwarz and the Gauss--Green theorem, we have
\begin{align}
F_0(C,D)\notag &= c_1\h(\pa C \cap \pa \Omega)+c_2\h(\pa D \cap \pa \Omega) + c_3\h(\pa C \cap  \pa D) \\ \label{sharp}
&\geq c_1 \h(\pa C \cap \pa \Omega)+c_2\h(\pa D \cap \pa \Omega) + c_3 \int_{\pa C \cap  \pa D} (v \cdot \nu_C )\,d\h \\
&= c_1 \h(\pa C \cap \pa \Omega)+c_2\h(\pa D \cap \pa \Omega)  -c_3 \int_{\pa C \cap \pa \Omega} (v \cdot \nu_\Om)\,d\h \notag \\
&=  \int_{\pa C \cap (\Gamma_1 \cup \Gamma_2)}(c_1 -c_3v \cdot \nu_\Om)\,d\h +\int_{(\pa C \cap \pa \Omega)\setminus (\Gamma_1 \cup \Gamma_2)}(c_1 -c_3v \cdot \nu_\Om)\,d\h\notag \\ &\hspace{.6cm}+\int_{\pa D \cap (\Gamma_1 \cup \Gamma_2)}c_2\,d\h +\int_{(\pa D \cap \pa \Omega)\setminus (\Gamma_1 \cup \Gamma_2)}c_2\,d\h.  \notag
\end{align}
Note that because of the restrictions in \eqref{cant} on the admissible partitions, the second and fourth integrals in the previous lines do not depend on $(C,D)$. In fact, $(\pa C \cap \pa \Omega)\setminus (\Gamma_1 \cup \Gamma_2) = (\pa A \cap \pa \Omega)\setminus (\Gamma_1 \cup \Gamma_2)$ and $(\pa D \cap \pa \Omega)\setminus (\Gamma_1 \cup \Gamma_2) = (\pa B \cap \pa \Omega)\setminus (\Gamma_1 \cup \Gamma_2)$. Thus if we set
$$
I:=\int_{(\pa C \cap \pa \Omega)\setminus (\Gamma_1 \cup \Gamma_2)}(c_1 -c_3v \cdot \nu_\Om)\,d\h+\int_{(\pa D \cap \pa \Omega)\setminus (\Gamma_1 \cup \Gamma_2)}c_2\,d\h,
$$
the above inequality becomes
\begin{equation}\label{only}
F_0(C,D) \geq I + \int_{\pa C \cap  (\Gamma_1 \cup \Gamma_2)}(c_1 -c_3v \cdot \nu_\Om)\,d\h+\int_{\pa D \cap  (\Gamma_1 \cup \Gamma_2)}c_2\,d\h,
\end{equation}
where $I$ does not depend on the choice of partition. Now, due to the strict convexity of $\Gamma_1$ and $\Gamma_2$ and the assumption that $c_1 -c_3v \cdot \nu_\Om=c_2$ at $P$ and $Q$, the following inequalities hold:
\begin{equation}\notag
c_1 -c_3v \cdot \nu_\Om<c_2 \textrm{ on }\pa A \cap \Omega
\end{equation}
and
\begin{equation}\notag
c_1 -c_3v \cdot \nu_\Om>c_2 \textrm{ on }\pa B \cap \Omega.
\end{equation}
Consequently, the only partitions which minimize the right hand side of \eqref{only} are those satisfying
\begin{equation}\label{same}
\pa C \cap \pa \Om = \pa A \cap \pa \Om \textrm{ and } \pa D \cap \pa \Om = \pa B \cap \pa \Om.
\end{equation}
In addition, the inequality in \eqref{sharp} is sharp unless \begin{equation}\label{same1}
v = \nu_C\ \hspace{.5cm}\h \textup{ a.e. on } \pa C \cap  \Om.
\end{equation}
Thus by applying \eqref{plane}, it follows that if \eqref{sharp} is an equality, then $\pa C \cap \pa D$ is a line segment. We have shown that $(A,B)$ is the only partition satisfying \eqref{same} and \eqref{same1} and therefore the unique minimizer of $F_0$ over all partitions satisfying \eqref{cant}, and so the theorem follows.
\end{proof}
\section{Asymptotic Energy of Minimizers of $F_\e$ for Boundary Data with Degree}\label{degree}
In this section we consider the behavior of minimizers of $F_\e$ with the surface term
\begin{equation}\label{name}
f_s = \gamma|(\I - \z \otimes \z )Q\z|^2
\end{equation}
and also subject to the boundary condition given by \eqref{g2}, $g(x)=-3\beta \left( n(x) \otimes n(x) - \f{1}{3} \I \right) $. The invariance of $Q$--tensors under reflection through the origin implies that \begin{equation}\label{degreee} \textup{the degree of }g \textup{ is an integer multiple of one--half};\end{equation}see \cite[Definition 2]{bpp}. We will denote this integer by $k$. Throughout this section, we will assume that the temperature is low enough so that the uniaxial nematic state with nematic order parameter $s_\star$ is the minimizer of $f_{LdG}$. From the description \eqref{state} of the zero set of $\F$, we see that the only effect of perturbing $\F$ by $2f_s$ as in \eqref{name} is to require now that $\z$ be an eigenvector of any minimizer of $\F+2f_s$. Hence the zero set $P$ of $W=f_{LdG} + 2f_s$ is the union of the circle $P_1:=\left\lbrace s_\star\left(m \otimes m - \f{1}{3}\I\right):m\in \mathbb{S}^1\times \{0\}\right\rbrace$ and the point $P_2:= s_\star\left(\z \otimes \z - \f{1}{3}\I\right)$. Let us assume $\beta$ is such that \begin{equation}\label{assu}\p_1\left(g \right)<\p_2\left(g\right) ,\end{equation} i.e., the boundary data $g$ is closer to the circle $P_1$ than to the point $P_2$ in the degenerate metric $d_{\sqrt{W}}$. We recall the notation $P_0$ for the image of $g$. In the following theorem regarding the energy of the minimizers $Q_\e$ of $F_\e$, we capture both the leading order boundary layer contribution reminiscent of vector Allen--Cahn and the lower order vortex contribution reminiscent of Ginzburg--Landau.
\begin{theorem}\label{asy} Let $\Omega$ be a simply--connected domain and $g$ have degree $k$. Assume that $f_s$ is given by $  \gamma|(\I - \z \otimes \z )Q\z|^2$. Then the minimizers $Q_\e$ of $F_\e$ satisfy the asymptotic development
\begin{equation}\label{t6}
F_\e(Q_\e) = 2\int_{\partial \Omega} \p_1(g(x)) \, d\h(x) + \e s_\star^2\pi k \log\f{1}{\e} + O(\e)
\end{equation}
as $\e\rightarrow 0$.
\end{theorem}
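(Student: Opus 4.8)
The plan is to prove \cref{asy} as a two–sided estimate: a matching upper bound obtained by an explicit construction, and a lower bound valid for every minimizer $Q_\e$ (assuming $k>0$ without loss of generality, the case $k<0$ being symmetric). The development has two scales — the $O(1)$ term $2\int_{\pa\Om}\p_1(g)\,d\h$ is the cost of the transition layer from the Dirichlet datum $g$ into the nearest well $P_1$ (recall \eqref{assu}), and the $O(\e|\log\e|)$ term $\e s_\star^2\pi k\log\tfrac1\e$ is a Ginzburg--Landau vortex cost forced by the winding $k$ of $g$ — and the real content of the theorem is that these decouple up to $O(\e)$. As in the computation preceding \eqref{fe}, the $\e^{-3}$ weight on $|\nabla_z Q|^2$ forces any finite--energy $Q_\e$ to be $z$--independent up to an error $O(\e)$, so I would work with $\tilde F_\e(Q)=\int_\Om(\e|\nabla_x Q|^2+\tfrac1\e W(Q))\,dx$ on $\Om\subset\R^2$, whose well set is $P_1\cup P_2$ with $P_1=\{s_\star(m\otimes m-\tfrac13\I):m\in\mathbb S^1\times\{0\}\}$. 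The constant $s_\star^2$ is explained by the identity $|\nabla_x Q|^2=2s_\star^2|\nabla m|^2$ for $Q=s_\star(m\otimes m-\tfrac13\I)$ with $|m|=1$; equivalently, parametrizing $P_1$ by the angle $\phi$ for which one full turn traverses $P_1$ once, $|\nabla_x Q|^2=\tfrac{s_\star^2}{2}|\nabla\phi|^2$ for $P_1$--valued maps, so a degree--$k$ loop into $P_1$ carries GL energy $\e\tfrac{s_\star^2}{2}\cdot 2\pi k\log\tfrac1\e=\e s_\star^2\pi k\log\tfrac1\e$.

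For the upper bound I would fix $k$ interior points $a_1,\dots,a_k\in\Om$, excise the cores $B(a_j,\e)$ and a boundary collar $\{\dist(\cdot,\pa\Om)<\lambda\e\}$ of width $O(\e)$, and on the remaining region set $\tilde Q_\e=s_\star(m_\star\otimes m_\star-\tfrac13\I)$, where $m_\star$ is a smoothing of the canonical harmonic map of \cite{bbh} into $P_1$ with a degree $+1$ singularity at each $a_j$ and inner boundary trace equal to the $P_1$--projection of $g$; the BBH expansion gives that this contributes $\e s_\star^2\pi k\log\tfrac1\e+O(\e)$, the renormalized energy being $O(1)$. On each core I would insert a radial Ginzburg--Landau profile joining the center of the well to $P_1$; since $|\nabla|=O(1/\e)$ and $W=O(1)$ on a set of area $O(\e^2)$, each core costs $O(\e)$. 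In the collar I would bridge the $P_1$--valued inner trace to $g$ along normal segments by a transition profile as in the recovery--sequence construction for \cref{t1} (cf.\ \eqref{refer}); for this particular $f_s$ the potential $W$ vanishes nondegenerately transverse to $P_1$, so the $\delta$--regularization there can be dispensed with, the optimal profile reaches $P_1$ exponentially fast, and the error improves to $O(\e)$, giving collar cost $2\int_{\pa\Om}\p_1(g)\,d\h+O(\e)$. Summing yields $F_\e(Q_\e)\le F_\e(\tilde Q_\e)\le 2\int_{\pa\Om}\p_1(g)\,d\h+\e s_\star^2\pi k\log\tfrac1\e+O(\e)$.

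For the lower bound, the upper bound first gives a uniform energy bound, so by \cref{comapctness} and the lower semicontinuity \eqref{lsc} a subsequence $\Lambda$--converges to a minimizer $Q_0$ of $F_0$ with $F_0(Q_0)\le 2\int_{\pa\Om}\p_1(g)\,d\h$; since $\p_1(g)<\p_2(g)$ by \eqref{assu}, the only minimizer is the pure partition $A_1=\Om$, which pins the leading term and shows $Q_\e$ is asymptotically $P_1$--valued off $\pa\Om$. For the sharp remainder I would, following \cite[Ch.~III]{bbh} and \cite{as3}, use the a priori bounds $\|Q_\e\|_{L^\infty}\le C$ (truncation, as in the proof of \eqref{lsc}) and $\|\nabla Q_\e\|_{L^\infty}\le C/\e$ (elliptic regularity for the $\e$--rescaled Euler--Lagrange system) together with a clearing--out/$\eta$--ellipticity estimate to show that outside the collar and outside a bounded number of bad disks $B(x_i^\e,\lambda\e)$ of total radius $O(\e)$, the map $Q_\e$ lies in a fixed neighborhood of $P_1$ on which the nearest--point projection $\pi_1$ is smooth and $\dist(Q_\e,P_1)$ is controlled well enough that the induced errors are $O(\e)$; the total winding of $\pi_1\circ Q_\e$ around the bad disks is $k$ by continuity from $\pa\Om$. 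A vortex--ball (Jerrard--Sandier type) lower bound then gives $\tilde F_\e(Q_\e,\,\text{good set})\ge\e s_\star^2\pi k\log\tfrac1\e-C\e$, and slicing the collar along normal segments with $\e|\pa_\nu Q_\e|^2+\tfrac1\e W(Q_\e)\ge 2\sqrt{W(Q_\e)}\,|\pa_\nu Q_\e|$ and the same transverse nondegeneracy of $W$ gives $\tilde F_\e(Q_\e,\,\text{collar})\ge 2\int_{\pa\Om}\p_1(g)\,d\h-C\e$; since the collar and the good set are disjoint, the bad disks contribute nonnegatively, and $F_\e=\tilde F_\e+O(\e)$, adding these closes the estimate.

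I expect the main obstacle to be the combination of vortex localization and the $O(\e)$--precise decoupling: one must rule out that for minimizers the defect set degenerates into thin tubes joining $\pa\Om$ to the interior rather than concentrating on $O(\e)$--scale disks, and one must show the boundary--layer energy and the interior logarithmic energy genuinely add with only an $O(\e)$ cross term, which requires sharp control of $\dist(Q_\e,P_1)$ both in the collar and on the good set. This is precisely where the structure of $P$ for $f_s=\gamma|(\I-\z\otimes\z)Q\z|^2$ — $P_1$ a smooth nondegenerate circle, $\z$ an eigenvector at every minimizer — is essential, and it is why one transplants the estimates of \cite{as3} rather than arguing from scratch; the remaining ingredients ($\Gamma$--convergence, $\Lambda$--compactness, the refined boundary layer) are already available from the earlier sections.
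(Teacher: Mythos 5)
Your proposal follows essentially the same two--sided strategy as the paper: an explicit upper--bound construction (boundary layer plus interior vortex profile) and a Ginzburg--Landau--type lower bound transplanted from \cite{as3,as2,bbh}, and the computation of the constant $s_\star^2\pi k$ from $|\nabla Q|^2=2s_\star^2|\nabla m|^2$ is the correct mechanism. A few places where the details differ and where your version has slips worth noting. For the reduction from the three--dimensional $F_\e$ to the two--dimensional $\tilde F_\e$, the paper does not merely invoke approximate $z$--independence of $Q_\e$; instead it bounds $F_\e(Q_\e)\geq\int_0^1\min_{Q\in H^1_g(\Om;\s)}G_\e(Q)\,dz+O(\e)=G_\e(\tilde Q_\e)+O(\e)$ by minimizing fiberwise, which is the clean way to get the lower bound onto the two--dimensional functional; your phrasing needs this. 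In the upper bound the paper uses the boundary--layer profile $\gamma_{\sigma(x)}(h(d(x)/\e))$ on $\{d(x)<\sqrt\e\}$ and linearly interpolates to the $P_1$--valued interior map on $\{\sqrt\e<d(x)<2\sqrt\e\}$; the $\sqrt\e$ depth is important because the profile only reaches $P_1$ exponentially in $d/\e$, so a collar of genuinely $O(\e)$ width (your $\{d<\lambda\e\}$ with fixed $\lambda$) leaves a fixed $e^{-c\lambda}$ mismatch whose gluing cost is $O(1)$, not $O(\e)$; you'd need width $\gtrsim\e\log(1/\e)$ or $\sqrt\e$. For the interior the paper quotes the construction from \cite[Lemma 3.6]{bpp} rather than building a BBH canonical--harmonic--map competitor directly, but the two are the same ingredient. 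Finally, your preliminary step of invoking $\Gamma$--convergence and $\Lambda$--compactness to ``pin the leading term'' is harmless but does not actually contribute to the $O(\e)$--precise lower bound, which the paper (and you, ultimately) obtain by citing the argument of \cite[Proposition 3.1]{as3} together with \cite[Proposition 3.2]{as3} and \cite[Proposition 5.1]{as2}; the clearing--out, vortex--ball, and slicing ingredients you list are indeed what those references supply.
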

The proof of the theorem will consist of showing that the right hand side of \eqref{t6} bounds $F_\e(Q_\e)$ from above and from below.\par
\begin{proof}[Proof of the upper bound.] We will first prove the upper bound by constructing a sequence of functions $R_\e$ which satisfy
\begin{align}
F_\e(R_\e) \leq 2 \int_{\partial \Omega} \p_1(g(x)) \, d\h(x) + \e s_\star^2\pi k \log\f{1}{\e} + O(\e).
\end{align}
The $R_\e$ will be independent of $z$, so we will refer to them as functions of $x\in\Omega$ and treat the surface term as a bulk term, since for $R_\e=R_\e(x)$ only we have
\begin{align}
F_\e(R_\e) &= \int_{\Omega \times (0,1)} \left( \e|\nabla_{x}R_\e|^2+\f{f_{LdG}(R_\e)}{\e}\right) \,dx\,dz + \f{1}{\e} \int_{\Omega \times \{0,1\}}f_s(R_\e) \,dx \\ \notag
&= \int_{\Omega }\left( \e|\nabla_{x}R_\e|^2+ \f{W(R_\e)}{\e}\right) \,dx .
\end{align}
Our strategy will be to combine the construction of \cite{bpp} away from $\pa \Om$ with a boundary layer near $\pa \Om$. The interior construction will contribute the $\e \log \f{1}{\e}$ term and the boundary layer will contribute the order 1 term in \eqref{t6}. Throughout the estimates, the generic constant $C$ varies from line to line but does not depend on $\e$.\par
We first present some preliminaries regarding $W$ and $d_{\sqrt{W}}(\cdot,P)$. Let us denote the uniaxial well of $\F$ by $$Z:=  \left\lbrace s_\star \left(m \otimes m - \f{1}{3}\I\right) : m\in \mathbb{S}^2 \right\rbrace.$$ First, as observed in \cite[Equation 1.14]{bpp}, we have the following inequality regarding $\F$ and $\dist(\cdot,Z)$, the Euclidean distance to $Z$: there exists $\delta>0$ and $C>0$ such that 
\begin{equation}\notag
\f{1}{C}\dist(Q, Z)^2 \leq \F(Q) \leq C \dist(Q,Z)^2
\end{equation}
for any $Q$ such that $\dist(Q,Z)<\delta$. We will say $\F \sim \dist(\cdot,Z)^2$ in a $\delta$--neighborhood of $Z$. It is quickly seen that after adding $2f_s = 2\gamma|(\I - \z \otimes \z )Q\z|^2$ to $\F$, we have $W \sim \dist(\cdot,P)^2$ in some $\delta'$--neighborhood of $P$. Since $W$ and $\dist(\cdot ,P)$ both vanish exactly on $P$, it follows that for any compact set $B\subset \s$, there exists $C>0$, depending on $B$, such that
\begin{equation}\label{quadratic}
 W \sim \dist(\cdot,P)^2
\end{equation}
on $ B$. It is straightforward to see using \eqref{quadratic} and the definition of $d_{\sqrt{W}}$ that $d_{\sqrt{W}}( \cdot, P)$ satisfies the same property, namely that on any compact set $B\subset \s$,
\begin{equation}\label{quadratic2}
d_{\sqrt{W}}(\cdot,P) \sim \dist(\cdot,P)^2.
\end{equation}
\par
We will also need, similar to Section \ref{gproof}, geodesics under the degenerate metric $d_{\sqrt{W}}$ along with the solutions of an ODE to construct the boundary layer. We aim to bridge the boundary data
$$
g(x)=-3\beta \left( n(x) \otimes n(x) - \f{1}{3} \I \right) 
$$
to the well $P_1$. The details are different than those in Section \ref{gproof} due to the need for precise estimates of the error involved in the construction. First, fix $x_0 \in \pa \Om$ and obtain a curve $\gamma_{x_0}$ which is a geodesic for $d_{\sqrt{W}}(g(x_0),P_1)$. We assume $\gamma_{x_0}:[0,b]\rightarrow \s$ which is parametrized with respect to arc length, so that $b$ is the Euclidean length of $\gamma$, and satisfies $\gamma_{x_0}(0)=g(x_0)$, $\gamma_{x_0}(b)\in P_1$, and 
\begin{equation}\notag
\int_0^b \sqrt{W(\gamma_{x_0}(t))}|\gamma_{x_0}'(t)| \,dt =\p_1(g(x_0)).
\end{equation}
By the assumption \eqref{assu} that $\p_1\left(g \right)<\p_2\left(g\right)$, we have $\p_1(g(x_0))=d_{\sqrt{W}}(P,g(x_0))$. We note also that \eqref{assu} allows us to require without loss of generality that \begin{equation}\label{van}W(\gamma_{x_0}(t))\textup{ vanishes only for }t=b.\end{equation}For $x\in \pa \Om$ with $x\neq x_0$, we define  $\gamma_{x}=r_{\theta_x} \gamma_{x_0} r_{\theta_x}^T$, where $\theta_x$ is chosen so that $r_{\theta_x} g(x_0) r_{\theta_x}^T= g(x)$.  We see that due to the rotational symmetry described at the beginning of Section \ref{gproof}, $\gamma_{x}(0)=g(x)$, $\gamma_{x}(b)\in P_1$, and 
\begin{equation}\label{ggg}
\int_0^b \sqrt{W(\gamma_{x}(t))}|\gamma_{x}'(t)| \,dt=\p_1(g(x)) = d_{\sqrt{W}}\left(P,g(x)\right)=d_{\sqrt{W}}(P,P_0).
\end{equation}
In addition, we remark that unlike in Section \ref{gproof}, the $\gamma_{x}$ vary smoothly in $x$ over all of $\pa \Om$, even near $x_0$.
\par
Next, we consider the following ODE:
\begin{equation}\label{odee}
 \begin{cases} 
      \f{\pa}{\pa s}h(s) = \sqrt{W(\gamma_{x_0}(h(s)))},  \\
      h(0) = 0.
   \end{cases}
\end{equation}
Using the fact that $W(\gamma_{x_0}(t))\geq C(t-b)^2$ for some $C>0$, which follows from \eqref{quadratic} and \eqref{van}, one can argue as in \cite[Equations (1.17)-(1.21)]{sternbergthesis} and conclude that the solution $h$ to \eqref{odee} is defined on $[0,\infty)$, increasing, and approaches $b$ exponentially as $s\rightarrow \infty$. We observe that $h$ also solves the same ODE with $\gamma_{x_0}$ replaced by $\gamma_{x}$ for any $x \in \pa \Om$ because of the rotational symmetry of $W$.\par
Let us now define the $R_\e$ near $\pa \Om$. We recall from Section \ref{gproof} the notation $\sigma(x)$ for the projection of $x$ onto $\pa \Om$, $d(x)$ for the distance to $\pa \Om$, and $\Om_\zeta$ for the set $\{x\in \Om : d(x)\geq \zeta\}$. We fix $\zeta>0$ such that $d$, $\sigma$ are $C^1$ on $\Om \setminus \Om_\zeta$ and define for small $\e$
\[
 R_\e(x) = \begin{cases} 
\gamma_{\sigma(x)}(b),   &  2\sqrt{\e}\leq d(x) \leq \zeta, \\
\gamma_{\sigma(x)}\left(h\left(\f{d(x)}{\e}\right)\right), & d(x)\leq \sqrt{\e}.
   \end{cases}
\]
For $x$ such that $\sqrt{\e} < d(x) < 2\sqrt{\e}$, we define $R_\e$ separately on each segment normal to $\{x:d(x)=\sqrt{\e}\}$ by linearly interpolating between the values of $R_\e$ at the intersections of the segment with $\{x:d(x)=\sqrt{\e}\}$ and $\{x:d(x)=2\sqrt{\e}\}$. We will define $R_\e$ on $\Om_\zeta$ at the end.\par
We will now estimate the energy of $R_\e$ on $\Om \setminus \Om_\zeta$ and prove that it contributes the leading order term in \eqref{t6} up to an error of $O(\e)$. For each $x \in \Om\setminus \Om_\zeta$, let $\tau=\tau(x)$ be a unit vector tangent to the level set of $d$ at $x$ and $\eta=\eta(x)$ be the unit vector $\nabla d(x)$ perpendicular to $\tau$. We write 
\begin{equation}\notag
\int_{\Om\setminus\Om_\zeta} \left(\e|\nabla R_\e|^2 + \f{1}{\e}W(R_\e) \right)\,dx\notag
= \int_{\Om\setminus\Om_\zeta} \left(\e\left|\f{\pa}{\pa \tau}R_\e\right|^2+\e\left|\f{\pa}{\pa \eta}R_\e\right|^2 + \f{1}{\e}W(R_\e)\right)\,dx.
\end{equation}
First, we have that $\left|\f{\pa}{\pa \tau}R_\e\right|$ is bounded by some constant $C$ depending on the Lipschitz constant of $g$ and independent of $\e$ on $\Om \setminus \Om_\zeta$, so that 
\begin{equation}\notag
\int_{\Om \setminus \Om_\zeta} \e\left|\f{\pa}{\pa \tau}R_\e\right|^2 \,dx \leq O(\e).
\end{equation}
Since $\e\left|\f{\pa}{\pa \tau}R_\e\right|^2$ is the only non--zero term in the integrand when $2\sqrt{\e}\leq d(x) \leq \zeta$, we have
\begin{equation}\notag
\int_{\Om\setminus\Om_\zeta} \left(\e|\nabla R_\e|^2 + \f{1}{\e}W(R_\e) \right)\,dx = \int_{\{x:d(x)<2\sqrt{\e}\}} \left(\e\left|\f{\pa}{\pa \eta}R_\e\right|^2 + \f{1}{\e}W(R_\e) \right)\,dx+O(\e).
\end{equation}
Next, using the fact that $h(s)$ approaches $b$ exponentially as $s\rightarrow \infty$, one can easily conclude as in \cite[Equation (2.26)]{sternbergthesis} that
\begin{equation}
\int_{\{x:\sqrt{\e}<d(x)<2\sqrt{\e}\}} \left(\e\left|\f{\pa}{\pa \eta}R_\e\right|^2 + \f{1}{\e}W(R_\e) \right)\,dx \leq O(\e),
\end{equation}
so that in order to capture the leading order term in \eqref{t6}, it remains to estimate
\begin{equation}\label{blech}
I:= \int_{\{x:d(x)<\sqrt{\e}\}} \left(\e\left|\f{\pa}{\pa \eta}R_\e\right|^2 + \f{1}{\e}W(R_\e) \right)\,dx
.\end{equation}\par
Our estimates for $I$ utilize a similar strategy as in \cite[Proposition 2.1]{as3}. In particular, the remarks \eqref{quadratic}, \eqref{quadratic2} along with the ODE \eqref{odee} will be crucial in obtaining a precise form of the error. We note that \eqref{quadratic} and \eqref{quadratic2} imply that $W\sim d_{\sqrt{W}}(\cdot,P)$ on any compact set $B$. With $B=\{R_\e(x) : x \in \Om \setminus \Om_\zeta\}$, we can write
\begin{equation}\label{simm}
W(R_\e) \sim d_{\sqrt{W}}(R_\e, P)=\p_1(R_\e)
\end{equation}
on $ \Om \setminus \Om_\zeta$.
We will also use the identity
\begin{equation}\notag
\sqrt{\e}\f{\pa}{\pa \eta}R_\e(x)= \f{1}{\sqrt{\e}}\sqrt{W(R_\e(x))}\gamma'_{\sigma(x)}\left( h \left( \f{d(x)}{\e}\right) \right)   = -\f{1}{\sqrt{\e}}(\nabla \p_1)(R_\e(x)) ,
\end{equation}
for $x$ such that $d(x)<\sqrt{\e}$, which follows from \eqref{odee} and the fact that as the gradient of a distance function, $(\nabla \p_1)(\gamma_{\sigma(x)})$ is parallel to the geodesic direction $\gamma_{\sigma(x)}'$. From this identity and the fact that $|\gamma_{\sigma(x)}'|=1$ we conclude that
\begin{equation}\label{e1}
\e\left|\f{\pa}{\pa \eta}R_\e\right|^2 = \sqrt{\e}\f{\pa}{\pa \eta}R_\e \cdot -\f{1}{\sqrt{\e}}(\nabla \p_1)(R_\e) = \f{1}{\e}W(R_\e).
\end{equation}
Now applying the coarea formula and \eqref{e1}, we write
\begin{align}\notag
I
&=2 \int_0^{\sqrt{\e}} \int_{\{x:d(x)=s\}} \f{\pa}{\pa \eta} R_\e \cdot (-\nabla \p_1) (R_\e) \,d\h(x) \,ds \\ \notag
&=2 \int_0^{\sqrt{\e}} \int_{\{x:d(x)=s\}}  - \f{\pa}{\pa \eta}( \p_1\circ R_\e) \,d\h(x) \,ds \\ \notag
&= 2 \int_0^{\sqrt{\e}} \int_{\{x:d(x)=s\}}  - \f{\pa}{\pa s}\left( \p_1\left( \gamma_{\sigma(x)}\left( h\left( \f{s}{\e} \right) \right) \right) \right) \,d\h(x) \,ds \\ \notag
&= 2 \int_0^{\sqrt{\e}} \int_{\pa \Om}  - \f{\pa}{\pa s}\left( \p_1\left( \gamma_{x}\left( h\left( \f{s}{\e} \right) \right) \right) \right) |J(P_t)(x)|\,d\h(x) \,ds, \\ \notag
\end{align}
where $J(P_t)(x)$ is the Jacobian of the map $P_t : \pa \Om \rightarrow \{x \in \Om : d(x)=t\}$. For ease of notation let us refer to $\p_1\left( \gamma_{x}\left( h\left( s\e^{-1} \right) \right) \right)$ as $(\p_1\circ R_\e)(x,s)$ for $x \in \pa \Om$ and $0 \leq s \leq \sqrt \e$. Recalling the estimate $|J(P_t)(x)-1|\leq Ct$ from \eqref{jac}, we can write
\begin{align}\notag
I
&\leq 2\int_0^{\sqrt{\e}} \int_{\pa \Om} - \left( \f{\pa}{\pa s} (\p_1 \circ R_\e)(x,s)\right)(1+Cs) \,d\h(x) \,ds \\ \label{blahhh}
&= 2\int_{\pa \Om} \int_0^{\sqrt{\e}}  - \left( \f{\pa}{\pa s} (\p_1 \circ R_\e)(x,s)\right) (1+Cs) \,ds \,d\h(x).
\end{align}
We estimate the inner integral with the goal of improving our estimate of $\int_{\Om\setminus \Om_\zeta}W(R_\e)\,dx$. For each $x\in \pa \Om$, we have
\begin{align}\label{goodie}
\int_0^{\sqrt{\e}}  -  &\left(\f{\pa}{\pa s}(\p_1 \circ R_\e)(x,s)\right)(1+Cs)\,ds \\ \notag
&= (\p_1 \circ R_\e)(x,0)-(\p_1 \circ R_\e)(x,\sqrt{\e})(1+C\sqrt{\e}) + \int_0^{\sqrt{\e}}(\p_1 \circ R_\e)(x,s)C \,ds \\ \notag
&\leq (\p_1 \circ R_\e)(x,0) + \int_0^{\sqrt{\e}}(\p_1 \circ R_\e)(x,s)C \,ds \\ \notag
&\leq C
\end{align}
for some constant $C$ independent of $x$ and $\e$.  This estimate on the inner integral then yields $I\leq C$ after substituting into \eqref{blahhh}, which implies
\begin{equation}\label{w}
\int_{\{x:d(x)<\sqrt{\e}\}} W(R_\e) \,dx \leq C\e.
\end{equation}
But since $W(R_\e) \sim \p_1(R_\e)$ by \eqref{simm}, we see from \eqref{w} that 
\begin{equation}\label{bddi}
\int_{\{x:d(x)<\sqrt{\e}\} } \p_1\left( R_\e \right) \,dx  \leq C\e.
\end{equation}
Finally, integrating \eqref{goodie} over $x \in \pa \Om$ and using \eqref{bddi}, we have
\begin{align}\label{final}
I_1 &\leq 2\int_{\pa \Om} (\p_1\circ R_\e)(x,0) \,d\h(x) + 2C\int_{\{x:d(x)<\sqrt{\e}\} } \p_1 ( R_\e) \,dx \\ \notag &= 2\int_{\pa \Om} \p_1(g(x)) \,d\h(x) + O(\e) .
\end{align}
From \eqref{blech} and \eqref{final}, we conclude
\begin{equation}\label{layer}
\int_{\Om\setminus\Om_\zeta} \left(\e|\nabla R_\e|^2 + \f{1}{\e}W(R_\e) \right)\,dx\leq 2\int_{\partial \Omega} \p_1(g(x)) \, d\h(x) +O(\e).
\end{equation}
\par
Next, we define the $R_\e$'s on $  \Om_\zeta$. We will use a construction from \cite{bpp}, in which the authors examine the functionals
\begin{equation}\notag
\int_A (f_e(Q)+\e^{-2}f_b(Q))\,dx
\end{equation}
among $Q$-tensors which have $\z$ as an eigenvector and $P_1$--valued boundary data $\tilde{g}$ with degree $\tilde{k}$. Here $A\subset \R^2$, $f_e$ is the general Landau-de Gennes elastic energy density, and $f_b$ is a general bulk energy density which includes $\F$ as a specific example. In the proof of \cite[Lemma 3.6]{bpp}, a sequence of $Q$-tensors is constructed with energies bounded by $s_\star^2 \pi \tilde{k} \log\f{1}{\e}+O(1)$. The interested reader can find the sequence, denoted by $(\textbf{w}',r')$, in the proof in \cite[p. 810]{bpp}. Since we defined $R_\e$ on $\Om \setminus \Om_\zeta$ so that $R_\e(\pa \Om_\zeta)=\{\gamma_{\sigma(x)}(b):x\in \pa \Om_\zeta \} \subset P_1$ and  $R_\e$ restricted to $\pa \Om_\zeta$ has degree $k$ in the sense of \eqref{degreee}, we can apply the result of \cite[Lemma 3.6]{bpp}. We obtain $R_\e : \Om_\zeta \rightarrow \s$ such that $\z$ is an eigenvector for each $R_\e(x)$ and
\begin{align}\label{tilde}
\int_{\Om_\zeta}\left( \e|\nabla R_\e|^2 + \f{W(R_\e)}{\e}\right)\,dx   &= \int_{\Om_\zeta}\left( \e|\nabla R_\e|^2 + \f{\F(R_\e)}{\e}\right)\,dx  \\ \notag &\leq \e s_\star^2\pi k \log \f{1}{\e} + O(\e).
\end{align}
Adding the estimates \eqref{layer} and \eqref{tilde} gives
\begin{equation}\notag
F_\e(R_\e) \leq 2 \int_{\partial \Omega} \p_1(g(x)) \, d\h(x) + \e s_\star^2\pi k \log\f{1}{\e} + O(\e),
\end{equation}
and the upper bound is proved.
\end{proof}\par\color{black}
\begin{proof}[Proof of the lower bound.]
We turn now to the proof of the lower bound
\begin{equation}\label{p3.1}
F_\e(Q_\e) \geq 2 \int_{\partial \Omega} \p_1(g(x)) \, d\h(x) + \e s_\star^2\pi k \log\f{1}{\e} + O(\e).
\end{equation}
Similar to the proof of the lower semicontinuity condition \eqref{lsc} in Section \ref{gproof}, we want to replace the integrals of $f_s$ over the top and bottom of the cylinder by the integral of $2f_s$ over $\Om \times (0,1)$. By the calculations \eqref{decay1} and \eqref{decay2}, we have 
\begin{equation}\notag
F_{{\e}}(Q_{\e})=\int_{\Omega\times(0,1)} \left({\e}|\nabla_xQ_{\e}|^2 + \f{|\nabla_{z} Q_{\e}|^2}{ {\e} ^3} + \f{1}{{\e}}W(Q_{\e})\right)\,dx\,dz+O(\e).
\end{equation}
Next, using the fact that the boundary data $g$ is independent of $z$, we further estimate the energy $F_\e(Q_\e)$ from below. We write
\begin{align}\notag
F_{{\e}}(Q_{\e})&\geq \int_{\Omega\times(0,1)} \left({\e}|\nabla_xQ_{\e}|^2 + \f{1}{{\e}}W(Q_{\e})\right)\,dx\,dz+O(\e)\\ \notag
&=\int_0^1 \int_{\Omega} \left({\e}|\nabla_xQ_{\e}|^2 + \f{1}{{\e}}W(Q_{\e})\right)\,dx \,dz+O(\e)\\ \notag
&\geq \int_0^1 \min_{Q\in H_g^1(\Om;\s)}\left\lbrace\int_{\Omega} \left({\e}|\nabla_xQ|^2 + \f{1}{{\e}}W(Q)\right)\,dx \right\rbrace\,dz+O(\e)\\ \notag
&= \min_{Q\in H_g^1(\Om;\s)}\left\lbrace\int_{\Omega} \left({\e}|\nabla_xQ|^2 + \f{1}{{\e}}W(Q)\right)\,dx \right\rbrace+O(\e).
\end{align}
Let $G_\e$ be the functional
$$G_\e(Q):=\int_{\Omega} \left({\e}|\nabla_xQ|^2 + \f{1}{{\e}}W(Q)\right)\,dx$$
defined on $H^1_g(\Om;\s)$ and denote its minimizer by $\tilde{Q}_\e$. We have now that
\begin{equation}\notag
F_\e(Q_\e) \geq G_\e(\tilde{Q}_\e)+O(\e).
\end{equation}
It then suffices to show that
\begin{equation}\label{bound}
G_\e(\Q) \geq 2 \int_{\partial \Omega} \p_1(g(x)) \,d\h(x) + \e s_\star^2  \pi k \log \f{1}{\e} + O(\e).
\end{equation}\par
The proof of \eqref{bound} follows the proof of \cite[Proposition 3.1]{as3}, so we omit the details. We remark that the main difficulty in the proof is separating the leading order boundary layer contribution of order $O(1)$ from the interior contribution of order $O\left(\e\log \f{1}{\e}\right)$. In the case of Ginzburg--Landau, this was accomplished in \cite{as1} by using a decomposition formula from \cite{lm}. For a general potential $W$, an analogous formula cannot be expected, so different techniques are needed. The general program is to consider a neighborhood $\Om \setminus \Om_{c\e^\alpha}$ of $\pa \Om$ for $\alpha<1$ and prove that $G_\e(\Q,\Om \setminus \Om_{c\e^\alpha})$ contributes the leading order term $2\int_{\partial \Omega} \p_1(g(x)) \,d\h(x)$ up to a certain error term $e_\e$; see  \cite[Proposition 3.2]{as3}. One then proceeds by showing that $G_\e(\Q, \Om_{c\e^\alpha})+e_\e$ is bounded below by $\e s_\star^2  \pi k \log \f{1}{\e}$; see \cite[Proposition 5.1]{as2}, which adapts techniques from \cite{bbh,struwe}.\end{proof}
With the asymptotic development of \cref{asy} in hand, a natural question is whether a subsequence of the minimizers $Q_\e$ converges to some $Q_0$. Results of this type have been obtained for the case of Ginzburg--Landau type problems in \cite{as1,as2,as3,bbh} and for a two--dimensional Landau--de Gennes model in \cite{bpp}. The limiting map for the Ginzburg--Landau type problems is the so--called ``canonical harmonic map" identified in \cite{bbh} and generalized in \cite{as1,as2,as3}. By utilizing a suitable change of variables, a similar result is proved for the two--dimensional Landau--de Gennes model in \cite{bpp}. In the latter case, the limiting map is uniaxial and the minimizers along a convergent subsequence have degree $1/2$ around the singularities of the limiting map, cf. \cite[Corollary A]{bpp}. We expect a similar result to hold for our problem, but we have not carried out the details. Finally, one could certainly determine the location of the singularities of the limiting map, corresponding to the disclination lines in the nematic film. In all of the above works, the locations of the singularities are governed by the minimization of a ``renormalized energy," and such a program could be carried out for this problem as well.
\section{Appendix}\label{app}
\begin{lemma}\label{i}
The minimum of $W=\F+2f_s$ is achieved and can be characterized as follows:
\begin{enumerate}
\item If $\beta$ cannot be written as a convex linear combination of $\lambda_i$'s, where $\lambda=(\lambda_1,\lambda_2,\lambda_3)$ is a stationary point of $\F$, then $\z$ is an eigenvector of any minimizer.
\item If $\beta$ can be written as a convex linear combination of $\lambda_i$'s, then there are two cases:
\begin{enumerate}[(i)]
\item if $\beta=\lambda_i$ for some $i$, so that the convex linear combination is trivial, then $\z$ is an eigenvector of any minimizer;
\item if $\beta$ is not equal to one of the $\lambda_i$'s, so the convex linear combination is non--trivial, then $\z$ is not a eigenvector of any minimizer.
\end{enumerate}
\end{enumerate}
In either case $(1)$ or $(2)$, minimizers may be isotropic, uniaxial, or biaxial.
\end{lemma}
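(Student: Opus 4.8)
The plan is to turn this into a finite--dimensional problem, exploiting that $\F$ depends on $Q$ only through its eigenvalues while $f_s$ depends on $Q$ only through the vector $Q\z$. First, since $c>0$ forces $\F$, and hence $W$, to grow quartically at infinity while $f_s\geq0$, the functional $W$ is coercive and continuous on $\s$, so a minimizer $Q^\ast$ exists. Decompose $Q^\ast$ in the orthogonal splitting $\R^3=\R^2\oplus\R\z$ as a block matrix with $2\times2$ block $\tilde Q$, off--diagonal vector $v=(q_{13},q_{23})\in\R^2$, and $(3,3)$ entry $m=q_{33}$; then $f_s(Q^\ast)=\gamma|v|^2+\alpha(m-\beta)^2$, and $\z$ is an eigenvector of $Q^\ast$ precisely when $v=0$. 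Next I would record the Euler--Lagrange condition: since $\s$ is a linear subspace, $\nabla\F(Q^\ast)+2\nabla f_s(Q^\ast)$ is a scalar multiple of $\I$; because $\nabla\F(Q^\ast)$ is a polynomial in $Q^\ast$ it commutes with $Q^\ast$, hence so does $\nabla f_s(Q^\ast)$, whose off--diagonal block in this splitting is $\gamma v$ and whose $(3,3)$ entry is $2\alpha(m-\beta)$. Writing out $[\nabla f_s(Q^\ast),Q^\ast]=0$ gives $\bigl(\gamma(\tilde Q-m\I_2)+2\alpha(m-\beta)\I_2\bigr)v=0$, so either $v=0$, or $v$ is an eigenvector of $\tilde Q$; in the latter case one checks that $Q^\ast$ leaves $\mathrm{span}\{v,\z\}$ invariant, that $m$ is a nontrivial convex combination of the two eigenvalues of $Q^\ast$ on that plane, and that the residual identity $\nabla\F(Q^\ast)=\mu\I-2\nabla f_s(Q^\ast)$ produces two polynomial relations among the eigenvalues of $Q^\ast$ and $\beta$.

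The core of the argument is a case analysis on the reduced functional obtained by first minimizing $f_s$ over each isospectral class. If $\ell=(\ell_1,\ell_2,\ell_3)$ with $\sum\ell_i=0$ is the spectrum and $p_i$ denotes the squared component of $\z$ along the $i$--th eigenvector, then minimizing $W$ amounts to minimizing $U(\ell,p):=\F(\ell)+2\gamma\,\mathrm{Var}_p(\ell)+2\alpha(\langle p,\ell\rangle-\beta)^2$ over $\sum\ell_i=0$ and $p$ in the standard $2$--simplex, and $\z$ is an eigenvector of the minimizer exactly when the optimal $p$ is a vertex. The restriction of the $f_s$--part to an edge of the simplex is, as a function of the mean $s=\langle p,\ell\rangle$ running over an interval $[\ell_a,\ell_b]$ between two consecutive eigenvalues, the quadratic $2\gamma(s-\ell_a)(\ell_b-s)+2\alpha(s-\beta)^2$ with leading coefficient $2(\alpha-\gamma)$; I would combine this with the joint first--order conditions in $(\ell,p)$ and with the relations of the previous paragraph to show that a minimizer with $v\neq0$ forces $\beta$ to be a nontrivial convex combination $\beta=\sum_i t_i\ell^{(0)}_i$ of the components of a stationary point $\ell^{(0)}$ of $\F$. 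This is exactly the contrapositive of parts (1) and (2)(i): if no stationary point of $\F$ has $\beta$ in its convex hull, or if $\beta$ equals one of the $\ell^{(0)}_i$ so that the combination is trivial, then $v=0$; moreover, in the second situation, $q_{33}=\beta$ makes the $\alpha$--term vanish and the first--order condition in $\ell$ then forces the spectrum of the minimizer to itself be a stationary point of $\F$. For part (2)(ii) I would argue directly: given $\beta=\sum_i t_i\ell^{(0)}_i$ nontrivially with $\ell^{(0)}$ a (global) minimizer of $\F$ and $\beta\notin\{\ell^{(0)}_i\}$, take the explicit $Q$ with spectrum $\ell^{(0)}$ and $\z$ tilted between the two bracketing eigenvectors so that $q_{33}=\beta$ with minimal variance; one computes $W(Q)=\F(\ell^{(0)})+2\gamma\,(\beta-\ell^{(0)}_a)(\ell^{(0)}_b-\beta)$ and checks it is strictly below the value of $W$ at every configuration with $\z$ an eigenvector, so no minimizer can have $v=0$. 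Finally, the closing assertion is verified by exhibiting explicit parameter choices $(a,b,c,\alpha,\gamma,\beta)$ for which the minimizer produced above is the isotropic state $\ell=0$, a uniaxial state, and a genuinely biaxial state, respectively.

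The step I expect to be the main obstacle is the forward implication: converting the first--order and commutation relations valid at a minimizer with $v\neq0$ into the clean conclusion that $\beta$ lies in the convex hull of, and is a nontrivial combination of, the components of a stationary point of $\F$. This requires using the \emph{global} (not merely critical--point) minimality of $Q^\ast$, together with the quartic growth and the explicit structure of the stationary set of $\F$, to rule out the competing possibility that the spectrum of the minimizer is pushed far from every stationary configuration of $\F$. By contrast, the competitor construction for (2)(ii) and the isotropic/uniaxial/biaxial trichotomy are routine once the reduced functional $U(\ell,p)$ is in hand.
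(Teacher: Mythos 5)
Your reduction is the right one and matches the paper's in substance: both pass to the finite--dimensional problem of minimizing $\F(\lambda)+2\gamma\,\mathrm{Var}_y(\lambda)+2\alpha(\lambda\cdot y-\beta)^2$ over $\sum\lambda_i=0$ and $y$ in the standard simplex, with $\z$ an eigenvector iff $y$ is a vertex. Your commutation observation $[\nabla f_s(Q^\ast),Q^\ast]=0$ (from $\nabla\F(Q^\ast)+2\nabla f_s(Q^\ast)\in\R\I$ and $\nabla\F(Q^\ast)$ a polynomial in $Q^\ast$) is correct and not in the paper's proof, and the variance identity and edge--restriction quadratic $2\gamma(s-\ell_a)(\ell_b-s)+2\alpha(s-\beta)^2$ are also right. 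However, there is a genuine gap: the forward implication --- that a minimizer with $v\neq 0$ forces $\beta$ into the convex hull of the eigenvalues of a stationary point of $\F$, nontrivially --- is precisely where the lemma's content lies, and you leave it at the level of ``I would combine this with the joint first--order conditions.'' The paper carries this out concretely via a KKT/Lagrange analysis on the simplex, separately on the interior, the edges $\{y_k=0\}$, and the vertices $\{y_i=1\}$: at any critical point either $\lambda\cdot y=\beta$ (which forces $(\F)_{\lambda_i}$ constant in $i$, i.e.\ $\lambda$ stationary for $\F$) or the complementary slackness condition $2\alpha(\lambda\cdot y-\beta)\lambda_i=0$ plus the boundary KKT conditions force $\lambda=0$ or $y$ to a vertex or, on an edge, $\lambda_a=\lambda_b$ with $y_a=y_b=1/2$ --- in every one of which $\z$ is an eigenvector. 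That case analysis is the step you need to supply, and it is first--order/KKT rather than an appeal to global minimality and the explicit structure of the stationary set, as your last paragraph suggests.

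A second, sharper concern is your competitor argument for part $(2)(ii)$: you assert that $W$ at the tilted configuration, $\F(\ell^{(0)})+2\gamma(\beta-\ell^{(0)}_a)(\ell^{(0)}_b-\beta)$, is strictly below $W$ at every configuration with $\z$ an eigenvector. This is not obviously true for all $\alpha,\gamma>0$; the latter value is $\min_\lambda\bigl(\F(\lambda)+2\alpha(\lambda_j-\beta)^2\bigr)$, and for $\gamma$ large relative to $\alpha$ the inequality can fail. Indeed the paper itself only proves this fully at $\gamma=0$ and then hedges: ``Such examples should also be possible with positive $\gamma$, depending perhaps on the relationship between $\gamma$ and $\alpha$.'' So your bold claim here needs either a restriction on the parameters or an actual comparison estimate; as written it is a leap. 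In contrast, the paper's lift of part $(1)$ from $\gamma=0$ to $\gamma>0$ is clean (if $Q_0$ minimizes $W_0$ with $\z$ an eigenvector, then $W_\gamma(Q)\ge W_0(Q)\ge W_0(Q_0)=W_\gamma(Q_0)$, and any minimizer of $W_\gamma$ must then also have vanishing $\gamma$--term), and your general--$\gamma$ framework could be made to reproduce part $(1)$ once the KKT case analysis is inserted; part $(2)(ii)$ at $\gamma>0$ remains the delicate point in both treatments.
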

\begin{proof} The quartic growth of $\F$ at infinity and the fact that $f_s$ is positive immediately yield the existence of a global minimizer for $W$. Next, we state the equations for stationary points of $\F(Q)$, expressed as $\F(\lambda)$, subject to the condition $\sum \lambda_i=0$. We obtain the system
\begin{equation}\notag
\begin{cases} 
      (\F)_{\lambda_i}+k=0\\ \sum \lambda_i=0,\\
   \end{cases}
\end{equation}
where $k$ is a Lagrange multiplier. We are interested in finding critical points for $W(Q)=\F(\lambda)+\alpha(Q \z \cdot \z-\beta)^2+\gamma|(\I - \z \otimes \z )Q\z|^2$, where $\lambda=(\lambda_1,\lambda_2,\lambda_3)$ is the set of eigenvalues for $Q$. Let us write $Q=\lambda_i v_i \otimes v_i$, where $\{v_i\}$ is a mutually orthonormal set of vectors in $\R^3$. First, we consider the case where $\gamma=0$. Rewriting $W$ using the $\lambda_i$'s and $v_i$'s, we obtain
\begin{equation}\notag
W(Q) = \F(\lambda) + \alpha(\lambda_i (v_i \cdot \z)^2-\beta)^2.
\end{equation}
Note that since $1=|\z|^2 = \sum (v_i\cdot \z)^2$, if we let $y_i=(v_i\cdot \z)^2$, then $\sum y_i = 1$ and $y_i\geq 0 $ for each $i$. Minimizing $W$ subject to the constraint $\sum\lambda_i=0$ is therefore equivalent to minimizing
\begin{equation}\notag
\tilde{W}(\lambda,y) = \F(\lambda)+\alpha(\lambda_i y_i - \beta)^2
\end{equation}
subject to the constraints $\sum \lambda_i =0$, $\sum y_i = 1$, and $y_i \geq 0 $ for each $i$. 
We define the auxiliary function
\begin{equation}
F(\lambda,y)=\F(\lambda)+\alpha(\lambda \cdot y -\beta)^2 + h_\lambda\sum \lambda_i + h_y(\sum y_i-1),
\end{equation}
where $h_\lambda$ and $h_y$ are Lagrange multipliers. Any minimizer of $\tilde{W}$ must be a stationary point of $F$. We calculate
\begin{equation}\label{One}
\nabla_\lambda F = \nabla_\lambda \F(\lambda)+2\alpha(\lambda \cdot y-\beta)y+h_\lambda(1,1,1) = 0
\end{equation}
and
\begin{equation}
\label{Two} \nabla_y F=2\alpha(\lambda\cdot y -\beta)\lambda+h_y(1,1,1)=0.
\end{equation}
Adding the components of both sides of \eqref{Two} and using $\sum \lambda_i=0$, we find that $h_y=0$. Doing the same for \eqref{One} gives
\begin{equation}\notag
\sum(\F)_{\lambda_i}(\lambda)+2\alpha(\lambda \cdot y - \beta) + 3 h_\lambda=0,
\end{equation}
so we solve for $h_\lambda$ and get
\begin{equation}\notag
h_\lambda = -\f{1}{3}\sum(\F)_{\lambda_i}(\lambda) - \f{2}{3}\alpha(\lambda \cdot y - \beta)
.\end{equation}
Using this expression for $h_\lambda$ in \eqref{One}, we see that we need to solve
\begin{equation}
(\F)_{\lambda_i}(\lambda)+2\alpha(\lambda \cdot y-\beta)(y_i-\f{1}{3})-\f{1}{3}\sum(\F)_{\lambda_i}(\lambda) = 0
\end{equation}
and
\begin{equation}\notag
2\alpha(\lambda \cdot y - \beta)\lambda_i=0
\end{equation}
subject to the constraints $\sum y_i=1$, $y_i\geq 0$, and $\sum \lambda_i=0$. If there exists $\lambda$ and $y$ such that $\lambda \cdot y-\beta=0$, then the second equation is automatically satisfied and the first equation reduces to \eqref{One}. Hence any critical point $(\lambda,y)$ of $W$ which satisfies $\lambda \cdot y - \beta=0$ must also be a critical point of $\F$. For example, if $\overline{\lambda}=(\overline{\lambda}_1,\overline{\lambda}_2,\overline{\lambda}_3)$ is the minimizer of $\F$ and $\beta$ is equal to $\sum \overline{y}_i \overline{\lambda}_i$ for some $0\leq \overline{y}_i \leq 1$, then setting $y_i=\overline{y}_i$ and and $\lambda=\overline{\lambda}$ yields a minimizer for $W$, since 
\begin{equation}\notag
\F(\overline{\lambda}) \leq \min W(\lambda,y) \leq W(\overline{\lambda},\overline{y})=\F(\overline{\lambda}).
\end{equation}
If $\beta=\overline{\lambda}_i$, so the convex linear combination is the trivial one with the corresponding $\overline{y}_i=1$, then we see from the definition of $y_i$ that $\z$ is an eigenvector for the minimizer. Conversely, if the convex linear combination is non--trivial, so that at least two of the $y_i$'s are non--zero, we see that it is impossible that $\z$ is an eigenvector. Note that the minimizer is uniaxial in this case if the minimizer of $\F$ is uniaxial.\par
Suppose now that $\lambda\cdot y-\beta \neq 0$. Then $\lambda_i=0$ for each $i$; since $(\F)_\lambda(0)=0$, \eqref{One} is satisfied by setting $y_i=1/3$ for each $i$. Note in this case the minimizer is isotropic and so of course $\z$ is an eigenvector. Any other stationary points of $\F$, and thus extrema, must occur on the boundary of the admissible set in $y$, which happens when any one of the $y_i$ is $0$ or $1$. By symmetry, we only analyze the cases when $y_3$ is $0$ or $1$.\par
Suppose that $y_3=0$. We need to find critical points for
\begin{equation}\notag
F(y_1,y_2,\lambda)=\F(\lambda)+\alpha(y_1 \lambda_1 + y_2 \lambda_2-\beta)^2+h_y(y_1+y_2-1)+h_\lambda\sum \lambda_i,
\end{equation}
subject to the constraints $y_1+y_2=1$, $\sum\lambda_i=0$. Hence, we have
\begin{equation}\notag
\begin{cases}
(\F)_{\lambda_1}+2\alpha(y_1 \lambda_1 + y_2\lambda_2-\beta)y_1+h_\lambda=0,\\
(\F)_{\lambda_2}+2\alpha(y_1 \lambda_1 + y_2\lambda_2-\beta)y_2+h_\lambda=0,\\
(\F)_{\lambda_3}+h_\lambda=0,\\
2\alpha(y_1\lambda_1+y_2\lambda_2-\beta)\lambda_1+h_y=0,\\
2\alpha(y_1\lambda_1+y_2\lambda_2-\beta)\lambda_2+h_y=0,\\
y_1+y_2=1,\textup{ } \sum \lambda_i=0.
\end{cases}
\end{equation}
If 	$\beta=y_1\lambda_1+y_2\lambda_2$ for a solution, we argue as before and see that such a critical point must be a critical point of $\F$. If not, the fourth and fifth equations show that $\lambda_1=\lambda_2:=\overline{\lambda}$ and the first and second equations along with the symmetry of $\F$ with respect to permutations of the $\lambda_i$'s show that $y_1=y_2=1/2$. It follows that $\overline{\lambda}$ must solve
\begin{equation}\notag
(\F)_{\lambda_1}(\overline{\lambda},\overline{\lambda},-2\overline{\lambda})-(\F)_{\lambda_3}(\overline{\lambda},\overline{\lambda},-2\overline{\lambda})+\alpha(\lambda-\beta)=0.
\end{equation}
Note that the solution is uniaxial. Also note that $y_3$ must be $0$, which implies that $v_3$, the eigenvector associated to $\lambda_3$, must be perpendicular to $\z$. Hence $\z$ is in the eigenplane spanned by $v_1$ and $v_2$ and is thus an eigenvector.\par
Now suppose that $y_2=y_3=0$, so that $y_1=1$. We are looking for critical points of
\begin{equation}\notag
\F(\lambda)+\alpha(\lambda_1-\beta)^2+h_\lambda \sum \lambda_i
\end{equation}
subject to the constraint $\sum \lambda_i=0$. Suppose that $\lambda_1 \neq \beta$; if it was, we argue as before. We need to solve
\begin{equation}\notag
\begin{cases}
(\F)_{\lambda_1}+2\alpha(\lambda_1-\beta)+h_\lambda=0,\\
(\F)_{\lambda_2}+h_\lambda=0,\\
(\F)_{\lambda_3}+h_\lambda=0,\\
\sum \lambda_i=0.
\end{cases}
\end{equation}
It follows that $\lambda_1$, $\lambda_2$ must solve:
\begin{equation}\notag
\begin{cases}
(\F)_{\lambda_1}(\lambda_1,\lambda_2,-\lambda_1-\lambda_2)-(\F)_{\lambda_3}(\lambda_1,\lambda_2,-\lambda_1-\lambda_2)+2\alpha(\lambda_1-\beta)=0,\\
(\F)_{\lambda_2}(\lambda_1,\lambda_2,-\lambda_1-\lambda_2)=(\F)_{\lambda_3}(\lambda_1,\lambda_2,-\lambda_1-\lambda_2).
\end{cases}
\end{equation}
Clearly, the second equation is always satisfied when the second and third eigenvalue are the same, so that $\lambda_1=-2\lambda_2$, and the corresponding critical point is uniaxial. It is also possible that the second equation holds for other, biaxial, choices of $\lambda$'s as numerics seems to indicate. Regardless, $\z$ is an eigenvector of the minimizing $Q$ in this case, since $y_3=1$.\par
We have shown now that $\z$ is an eigenvector for any minimizer of $W$ when $\gamma$ is $0$ and $\beta$ is not a convex linear combination of the eigenvalues $\lambda_i$ of a stationary point of $\F$. This also remains true under the same assumption on $\beta$ if $\gamma$ is positive, since any minimizer of $W$ when $\gamma$ is $0$ which has $\z$ as an eigenvector must also minimize $W$ for any positive $\gamma$. In the case where $\beta$ is such a linear combination, we are able to construct examples in which $\z$ is and is not an eigenvector for the corresponding minimizers of $W$ with $\gamma=0$. Such examples should also be possible with positive $\gamma$, depending perhaps on the relationship between $\gamma$ and $\alpha$.\end{proof}
\section*{Acknowledgments} The author is grateful to his advisers, Dmitry Golovaty and Peter Sternberg, for their invaluable advice. He also acknowledges partial support from the NSF through grants DMS-1101290 and DMS-1362879 and from the Department of Mathematics at Indiana University.
\bibliographystyle{siam} 

\bibliography{Main}

\begin{thebibliography}{10}

\bibitem{abx1}
{\sc S.~Alama, L.~Bronsard, and X.~Lamy}, {\em Minimizers of the {L}andau--de
  {G}ennes energy around a spherical colloid particle}, Arch. Ration. Mech.
  Anal., 222 (2016), pp.~427--450.

\bibitem{abx2}
{\sc S.~{Alama}, L.~{Bronsard}, and X.~{Lamy}}, {\em {Spherical particle in a
  nematic liquid crystal under an external field: the Saturn ring regime}},
  ArXiv e-prints,  (2017).

\bibitem{ambrosio}
{\sc L.~Ambrosio}, {\em Metric space valued functions of bounded variation},
  Ann. Scuola Norm. Sup. Pisa Cl. Sci. (4), 17 (1990), pp.~439--478.

\bibitem{as1}
{\sc N.~Andr\'e and I.~Shafrir}, {\em Minimization of a {G}inzburg-{L}andau
  type functional with nonvanishing {D}irichlet boundary condition}, Calc. Var.
  Partial Differential Equations, 7 (1998), pp.~191--217.

\bibitem{as2}
{\sc N.~Andr\'e and I.~Shafrir}, {\em On a singular perturbation problem
  involving the distance to a curve}, J. Anal. Math., 90 (2003), pp.~337--396.

\bibitem{as3}
{\sc N.~Andr\'e and I.~Shafrir}, {\em On a singular perturbation problem
  involving a ``circular-well'' potential}, Trans. Amer. Math. Soc., 359
  (2007), pp.~4729--4756.

\bibitem{baldo}
{\sc S.~Baldo}, {\em Minimal interface criterion for phase transitions in
  mixtures of {C}ahn-{H}illiard fluids}, Ann. Inst. H. Poincar\'e Anal. Non
  Lin\'eaire, 7 (1990), pp.~67--90.

\bibitem{ball}
{\sc J.~M. Ball and A.~Majumdar}, {\em Nematic liquid crystals: From
  maier-saupe to a continuum theory}, Molecular Crystals and Liquid Crystals,
  525 (2010), pp.~1--11.

\bibitem{bpp}
{\sc P.~Bauman, J.~Park, and D.~Phillips}, {\em Analysis of nematic liquid
  crystals with disclination lines}, Arch. Ration. Mech. Anal., 205 (2012),
  pp.~795--826.

\bibitem{bbh}
{\sc F.~Bethuel, H.~m. Brezis, and F.~H\'elein}, {\em Ginzburg-{L}andau
  vortices}, vol.~13 of Progress in Nonlinear Differential Equations and their
  Applications, Birkh\"auser Boston, Inc., Boston, MA, 1994.

\bibitem{cane}
{\sc G.~Canevari}, {\em Biaxiality in the asymptotic analysis of a 2{D}
  {L}andau--de {G}ennes model for liquid crystals}, ESAIM Control Optim. Calc.
  Var., 21 (2015), pp.~101--137.

\bibitem{chic}
{\sc C.~Chiccoli, I.~Feruli, O.~D. Lavrentovich, P.~Pasini, S.~V. Shiyanovskii,
  and C.~Zannoni}, {\em Topological defects in schlieren textures of biaxial
  and uniaxial nematics}, Phys. Rev. E, 66 (2002), p.~030701.

\bibitem{dalmaso}
{\sc G.~Dal~Maso}, {\em An introduction to {$\Gamma$}-convergence}, vol.~8 of
  Progress in Nonlinear Differential Equations and their Applications,
  Birkh\"auser Boston, Inc., Boston, MA, 1993.

\bibitem{fons}
{\sc I.~Fonseca and L.~Tartar}, {\em The gradient theory of phase transitions
  for systems with two potential wells}, Proc. Roy. Soc. Edinburgh Sect. A, 111
  (1989), pp.~89--102.

\bibitem{gartland}
{\sc E.~C. {Gartland}, Jr}, {\em {Scalings and Limits of Landau-deGennes Models
  for Liquid Crystals: A Comment on Some Recent Analytical Papers}}, ArXiv
  e-prints,  (2015).

\bibitem{giusti}
{\sc E.~Giusti}, {\em Minimal surfaces and functions of bounded variation},
  vol.~80 of Monographs in Mathematics, Birkh\"auser Verlag, Basel, 1984.

\bibitem{gm}
{\sc D.~Golovaty and J.~A. Montero}, {\em On minimizers of a {L}andau--de
  {G}ennes energy functional on planar domains}, Arch. Ration. Mech. Anal., 213
  (2014), pp.~447--490.

\bibitem{gms}
{\sc D.~Golovaty, J.~A. Montero, and P.~Sternberg}, {\em Dimension reduction
  for the {L}andau-de {G}ennes model in planar nematic thin films}, J.
  Nonlinear Sci., 25 (2015), pp.~1431--1451.

\bibitem{ks89}
{\sc R.~V. Kohn and P.~Sternberg}, {\em Local minimisers and singular
  perturbations}, Proc. Roy. Soc. Edinburgh Sect. A, 111 (1989), pp.~69--84.

\bibitem{three}
{\sc S.~Kralj and E.~G. Virga}, {\em Universal fine structure of nematic
  hedgehogs}, J. Phys. A, 34 (2001), pp.~829--838.

\bibitem{lm}
{\sc L.~Lassoued and P.~Mironescu}, {\em Ginzburg-{L}andau type energy with
  discontinuous constraint}, J. Anal. Math., 77 (1999), pp.~1--26.

\bibitem{lin}
{\sc F.~Lin, X.-B. Pan, and C.~Wang}, {\em Phase transition for potentials of
  high-dimensional wells}, Comm. Pure Appl. Math., 65 (2012), pp.~833--888.

\bibitem{mz}
{\sc A.~Majumdar and A.~Zarnescu}, {\em Landau-{D}e {G}ennes theory of nematic
  liquid crystals: the {O}seen-{F}rank limit and beyond}, Arch. Ration. Mech.
  Anal., 196 (2010), pp.~227--280.

\bibitem{mm}
{\sc L.~Modica and S.~Mortola}, {\em Il limite nella {$\Gamma $}-convergenza di
  una famiglia di funzionali ellittici}, Boll. Un. Mat. Ital. A (5), 14 (1977),
  pp.~526--529.

\bibitem{newt}
{\sc N.~J. {Mottram} and C.~J.~P. {Newton}}, {\em {Introduction to Q-tensor
  theory}}, ArXiv e-prints,  (2014).

\bibitem{owen}
{\sc N.~C. Owen, J.~Rubinstein, and P.~Sternberg}, {\em Minimizers and gradient
  flows for singularly perturbed bi-stable potentials with a {D}irichlet
  condition}, Proc. Roy. Soc. London Ser. A, 429 (1990), pp.~505--532.

\bibitem{sch}
{\sc N.~Schopohl and T.~J. Sluckin}, {\em Defect core structure in nematic
  liquid crystals}, Phys. Rev. Lett., 59 (1987), pp.~2582--2584.

\bibitem{seg}
{\sc A.~Segatti, M.~Snarski, and M.~Veneroni}, {\em Analysis of a variational
  model for nematic shells}, Math. Models Methods Appl. Sci., 26 (2016),
  pp.~1865--1918.

\bibitem{sternbergthesis}
{\sc P.~Sternberg}, {\em The effect of a singular perturbation on nonconvex
  variational problems}, Arch. Rational Mech. Anal., 101 (1988), pp.~209--260.

\bibitem{rocky}
\leavevmode\vrule height 2pt depth -1.6pt width 23pt, {\em Vector-valued local
  minimizers of nonconvex variational problems}, Rocky Mountain J. Math., 21
  (1991), pp.~799--807.
\newblock Current directions in nonlinear partial differential equations
  (Provo, UT, 1987).

\bibitem{ziemer}
{\sc P.~Sternberg and W.~P. Zeimer}, {\em Local minimisers of a three-phase
  partition problem with triple junctions}, Proc. Roy. Soc. Edinburgh Sect. A,
  124 (1994), pp.~1059--1073.

\bibitem{struwe}
{\sc M.~Struwe}, {\em On the asymptotic behavior of minimizers of the
  {G}inzburg-{L}andau model in {$2$} dimensions}, Differential Integral
  Equations, 7 (1994), pp.~1613--1624.

\bibitem{virg}
{\sc E.~G. Virga}, {\em Variational theories for liquid crystals}, vol.~8 of
  Applied Mathematics and Mathematical Computation, Chapman \& Hall, London,
  1994.

\bibitem{whitney}
{\sc H.~Whitney}, {\em Elementary structure of real algebraic varieties}, Ann.
  of Math. (2), 66 (1957), pp.~545--556.

\bibitem{zun}
{\sc A.~Zuniga and P.~Sternberg}, {\em On the heteroclinic connection problem
  for multi-well gradient systems}, J. Differential Equations, 261 (2016),
  pp.~3987--4007.

\end{thebibliography}
\end{document}